\newcommand{\rb}[1]{ \left( #1 \right) }
\newcommand{\rrb}[1]{ \left\lbrace #1 \right\rbrace }
\newcommand{\curbr}[1]{ \lbrace #1 \rbrace }
\newcommand{\cb}[1]{ \left[ #1 \right] }
\newcommand{\abs}[1]{\left|#1\right|}
\newcommand{\norm}[1]{\left\lVert#1\right\rVert}
\newcommand{\normalt}[1]{\lVert#1\rVert}
\newcommand{\suml}{\sum\limits}
\newcommand{\intl}{\int\limits}
\newcommand{\maxl}{\max\limits}
\newcommand{\Exp}[1]{ \mathbb{E}\left[ #1 \right] }
\newcommand{\CExp}[2]{ \mathbb{E}\left[ #1 | #2 \right] }
\newcommand{\kc}{\mathcal{C}}
\newcommand{\Mu}{\operatorname{M}}
\newcommand{\de}{\mathrm{\,d}}
\newcommand{\CCal}{\mathcal{C}}
\newcommand{\E}{\mathbb{E}}
\newcommand{\Ibb}{\mathbb{I}}
\newcommand{\N}{\mathbb{N}}
\newcommand{\R}{\mathbb{R}}
\newcommand{\ONE}{\mathds{1}}
\newcommand{\xvec}{\mathbf{x}}
\newcommand{\Xvec}{\mathbf{X}}
\journal{JMVA}
\begin{document}

\begin{frontmatter}

\title{Mean and quantile regression in the copula setting: \\ properties, sharp bounds and a note on estimation}

\author[tru]{Henrik Kaiser}
\ead{henrik.kaiser@plus.ac.at}
\author[tru]{Wolfgang Trutschnig\corref{cor1}}
\ead{wolfgang@trutschnig.net}
\cortext[cor1]{Corresponding author}



\address[tru]{Department for Artificial Intelligence and Human Interfaces,   
University Salzburg, Hellbrunnerstrasse 34, A-5020 Salzburg, Austria\\ Tel: +43 662 8044 5326
Fax: +43 662 8044 137
}

\begin{abstract}
Driven by the interest on how uniformity of marginal distributions 
propa\-gates to properties of regression functions, in this contribution we tackle the following 
questions: Given a $(d-1)$-dimensional random vector $\Xvec$ and a
random variable $Y$ such that all univariate marginals of $(\Xvec,Y)$ are uniformly 
distributed on $[0,1]$, how large can the average absolute deviation of the mean 
and the quantile regression function of $Y$ given $\Xvec$ from the value $\frac{1}{2}$ be, and how much 
mass may sets with large deviation have? 
We answer these questions by deriving sharp inequalities, both in the mean as well as in the 
quantile setting, and sketch some cautionary consequences to nowadays quite 
popular pair copula constructions involving the so-called simplifying assumption. 
Rounding off our results, 
working with the so-called empirical checkerboard estimator in the bivariate setting, we
show strong consistency for both regression types and illustrate the speed of 
convergence in terms of a simulation study. 
\end{abstract}

\begin{keyword}
copulas \sep regression \sep quantiles \sep estimation \\[0.4cm]
 \MSC[2020] 60E05 \sep 62G08 \sep 62H20 \sep 60E15
\end{keyword}

\end{frontmatter}


\newtheorem{thm}{Theorem}
\newtheorem{lem}[thm]{Lemma}
\newtheorem{prop}[thm]{Proposition}
\newtheorem{cor}[thm]{Corollary}
\newdefinition{rmk}[thm]{Remark}
\newdefinition{ex}[thm]{Example}
\newdefinition{defin}[thm]{Definition}

\section{Introduction}

Regression methods, particularly mean and quantile regression, play a fundamental role 
throughout all quantitative fields. Traditionally, the focus lies on estimating the 
conditional mean of a response variable $Y$, given the values of an ensemble of 
explanatory variables $\Xvec$, with the aim to summarize the relationship between covariates
and outcome or to predict the value of $Y$, based on new observations of $\Xvec$. 
Trying to capture more information on the distribution of the 
outcome $Y$ given an observation of $\Xvec$, 
quantile regression (see, e.g., \cite{Koenker2005,KoenkerBassett1978})
provides a more comprehensive understanding of the response distribution across its entire range.

According to Sklar's famous theorem (see, \cite{DS2016,Nelsen2006}, copulas constitute 
the link between (continuous) multivariate distributions functions and their 
univariate marginals - as such, they capture all scale-invariant dependence between 
random variables. Motivated by this fact, we here study mean and quantile 
regression in the context of 
$d$-dimensional copulas, interpreting the first $d-1$ coordinates as covariates and 
the $d$-th coordinate as outcome. Our focus is not on separate estimation 
of the marginals and the underlying co\-pulas as done in \cite{DetteETAL2014} - inspired 
by curiosity about how uniformity of the marginals translates/propagates to characteristics 
of regression functions, the goal of this contribution is twofold: 
firstly, to derive best-possible upper bounds for the maximal $L^p$-deviation of the 
regression function from the value $\frac{1}{2}$ corresponding to the regression function describing independence. And, secondly, to determine best-possible bounds for the 
mass of sets with large deviation.  
In other words, assuming that $\Xvec$ is $(d-1)$-dimensional random vector and
$Y$ is a random variable on a joint probability space $(\Omega,\mathcal{A},\mathbb{P})$, 
such that $(\Xvec,Y)$ has copula $C$ as distribution function
(restricted to $[0,1]^d$), we want to know, how large 
$$
\int_{\Ibb^{d-1}} \vert \mathbb{E}(Y | \Xvec=\xvec) - \tfrac{1}{2}\vert^p \de \mathbb{P}^\Xvec(\xvec), \hspace{1cm} p \in [1,\infty),
$$
as well as 
$$
\mathbb{P}^\Xvec\rb{ \rrb{ \xvec \in \Ibb^{d-1}: 
\abs{ \mathbb{E}(Y | \Xvec=x) - \tfrac{1}{2} } \geq a } }
$$
can possibly be. After providing sharp inequalities for mean regression we ask (and answer) 
the analogous questions for quantile regression. 
To the best of our knowledge, these natural properties  
have not been investigated in the literature yet. 
While our original intention was to provide answers to the afore-mentioned question 
primarily for the family of so-called linkages, modeling the situation in which the 
covariates are independent (which is particularly useful for constructing
multivariate dependence measures (see \cite{Griessenberger2022}), 
we decided to state and prove our results for the general setting, since very little additional
technical effort is required. 

Throughout the past decade pair copula constructions in combination with the so-called 
simplifying assumption (praised for their flexibility also in the high-dimensional setting, see \cite{AasCzadoBakken2009, Joe2014,NAGLER}) have become more and more 
popular. As shown in \cite{MrozEtAl2021} (also see the very recent survey \cite{NAGLER2025}),
these constructions may suffer from very poor approximation quality 
and should therefore be handled with care. 
Our results on regression underline the fact that the afore-mentioned warning extends to 
the regression setting. 


The remainder of this paper is organized as follows: After gathering notation and 
preliminaries in Section 2, we study the absolute deviation of the mean regression function from its mean $\frac{1}{2}$ and answer the afore-mentioned two questions by 
providing best-possible bounds. 
Section 4 focuses on quantile regression and again establishes sharp bounds. Finally, in 
Section 5, we study the bivariate setting and prove strong consisteny of the empirical 
checkerboard estimator for the mean and the quantile regression function, 
without any regularity conditions for the copula $C$. A small simulation study, 
illustrating the obtained convergence results, rounds off the paper.

\section{Notation and Preliminaries} \label{SecNotPre}

For every metric space $(S,d)$ we will let $\mathcal{B}(S)$ denote the Borel $\sigma$-field on $S$.
Throughout this article, $\Ibb := [0, 1]$ denotes the closed unit interval, the dimension is denoted by $d \in \mathbb{N} \setminus \curbr{1}$, and bold symbols refer to vectors, e.g., $\xvec := (x_1,x_2,\ldots, x_{d}) \in \mathbb{R}^{d}$. For $\xvec := (x_1,x_2,\ldots, x_{d}) \in \mathbb{R}^{d}$ and $l \in \{1,\ldots,d\}$\textcolor{blue}{,} we will 
let $\xvec_{1:l}$ denote the vector $(x_1,\ldots,x_l) \in \mathbb{R}^l$. 
Since the main focus of this contribution is regression\textcolor{blue}{,} we will in particular write 
$(\xvec, y) = (x_1, \ldots, x_{d-1}, y) \in \mathbb{R}^d$ for all $\xvec \in \mathbb{R}^{d-1}$ 
and $y \in \mathbb{R}$.  
Furthermore, $\lambda_d$ stands for the $d$-dimensional Lebesgue-measure on $\mathcal{B}(\mathbb{R}^d)$ 
or $\mathcal{B}(\Ibb^d)$, with $\lambda := \lambda_1$, for brevity. 
The family of $d$-dimensional copulas is indicated by $\mathcal{C}^d$, the uniform metric on $\mathcal{C}^d$ is defined by
$$d_\infty(A,B):= \max_{(\xvec, y) \in \Ibb^d} \left| A(\xvec, y) - B(\xvec, y) \right| \hspace{1cm} (A, B \in \CCal^d).$$
It is well known that $(\mathcal{C}^d, d_\infty)$ is a compact metric space (see 
\cite{DS2016, Nelsen2006}). Specifically $\Pi_d$ and $\Mu_d$ denote the $d$-dimensional product (or independence) and minimum copula, respectively, i.e., $\Pi(\xvec, y) =y \prod_{j=1}^{d-1} x_j$ and $\Mu_d(\xvec, y) = \min\{x_1, \ldots, x_{d-1}, y\}$. In the bivariate case, we simply write $\Pi := \Pi_2$ and $\Mu := \Mu_2$.  \\
Given $C \in \mathcal{C}^d$, the corresponding $d$-stochastic measure is denoted by $\mu_C$, i.e., $\mu_C([\mathbf{0},\xvec]\times [0, y]) := C(\xvec, y)$ for all $(\xvec, y) \in \Ibb^d$, where $[\mathbf{0}, \xvec] := \times_{i=1}^{d-1} [0, x_i]$. 
For every vector $\mathbf{j}=(j_1,\ldots,j_l) \in \{1,\ldots,d\}^l$\textcolor{blue}{,} with $j_1<j_2<\ldots<j_l$\textcolor{blue}{,} we will let 
$C_\mathbf{j}$ denote the marginal copula of $C$ with respect to the coordinates $\mathbf{j}$. 
In other words, defining the projection $\pi_\mathbf{j}: \Ibb^d \rightarrow \Ibb^l$ by
$$
\pi_\mathbf{j}(x_1,\ldots,x_d)=(x_{j_1},x_{j_2},\ldots,x_{j_l})\textcolor{blue}{,}
$$
we have $C_\mathbf{j}$ corresponds to the push-forward $\mu_C^{\pi_\mathbf{j}}$ of $\mu_C$ via $\pi_\mathbf{j}$.  
To keep notation simple (and in accordance with $x_{1:l}$ from above), in the case of 
$\mathbf{j}=(1,2,\ldots,d-1)$\textcolor{blue}{,} we will simple write 
$C_{\mathbf{j}}=C_{1:(d-1)} \in \CCal^{d-1}$ and refer to $C_{1:(d-1)}$ as marginal copula of $C$ 
with respect to the first $d-1$ coordinates; analogously, in the case of $\mathbf{j}=(1,2,\ldots,d-2,d)$\textcolor{blue}{,}
we will write $C_{\mathbf{j}}=C_{1:(d-2),d} \in \CCal^{d-1}$.
Finally, the $d$-flipped  (flipped with respect to the last coordinate) of $C \in \CCal^d$ is defined by $\overline C(\xvec, y) := C_{1:(d-1)}(\xvec) - C(\xvec, 1-y)$, obviously we have $\overline C \in \CCal^d$.

Besides $d$-stochastic measures, in what follows (regular) conditional distributions of a copula will be of 
special importance. 
Suppose that $(\Omega_1,\mathcal{A}_1)$ and $(\Omega_2, \mathcal{A}_2)$ be measurable spaces. Then\textcolor{blue}{,} a map $K: \Omega_1 \times \mathcal{A}_2 \to \Ibb$ is called Markov kernel (a.k.a. transition probability) 
from $(\Omega_1,\mathcal{A}_1)$ to $(\Omega_2,\mathcal{A}_2)$, if the map $\omega_1 \mapsto K(\omega_1, A_2)$ is $\mathcal{A}_1$-$\mathcal{B}(\mathbb{R})$-measurable, for every fixed $A_2 \in \mathcal{A}_2$, and the map $A_2 \mapsto K(\omega_1, A_2)$ is a probability measure on $\mathcal{A}_2$,  for every fixed $\omega_1 \in \Omega_1$.  
Considering a random variable $Y$ and a $(d-1)$-dimensional random vector $\Xvec$ on a joint 
probability space $(\Omega, \mathcal{A}, \mathbb{P})$, 
a Markov kernel $K: \mathbb{R}^{d-1} \times \mathcal{B}(\mathbb{R}) \rightarrow \Ibb$ is said to be a regular conditional distribution of $Y$ given $\Xvec$, 
if for every $F \in \mathcal{B}(\mathbb{R})$ the following assertion holds: for $\mathbb{P}$-almost every $\omega \in \Omega$ we have
\begin{align*}
	K(\Xvec(\omega), F) = \mathbb{E}(\ONE_F \circ Y | \Xvec)(\omega). 
\end{align*}
For each random vector $(\Xvec, Y)$, it is well-known that a regular conditional distribution $K$ of 
$Y$ given $\Xvec$ exists and is unique for $\mathbb{P}^{\Xvec}$-a.e. $\xvec \in \mathbb{R}^{d-1}$, with $\mathbb{P}^{\Xvec}$ denoting the push-forward of $\mathbb{P}$ under $\Xvec$. It is also well known that $K$ only depends on $\mathbb{P}^{(\Xvec,Y)}$. We will write $(\Xvec,Y) \sim C$ 
if the copula $C$ is the distribution function of $(\Xvec,Y)$ restricted to $\Ibb^d$. 
Finally, without loss of generality, we interpret 
the Markov kernel $K_C$ of $C \in \CCal^d$ (with respect to the first $(d-1)$-coordinates) as a mapping $K_{C}:\Ibb^{d-1} \times \mathcal{B}(\Ibb) \to \Ibb$.
For further background on copulas, $d$-stochastic 
measures, conditional expectation and Markov kernels we refer to \cite[][]{DS2016, Klenke2020, Nelsen2006, Kallenberg2002}.

Generally speaking, disintegration theorems refer to integral representations of multivariate measures in terms of marginals and conditional distributions. 
In the case of $d\geq 3$ many such representations exist 
(see \cite{Kallenberg2002}). In its simplest form, in the copula setting we have 
\begin{align}\label{eq:disint}
	\mu_C(\mathbf{B} \times F) = \int_{\mathbf{B}} K_{C}(\xvec,F) \, \de\mu_{C_{1:(d-1)}}(\xvec)\textcolor{blue}{,}
\end{align}
for every $\mathbf{B} \in \mathcal{B}(\Ibb^{d-1})$ and every $F \in \mathcal{B}(\Ibb)$, implying in particular 
that the $d$-th univariate maginal is uniform on $\Ibb$. 
In the sequel\textcolor{blue}{,} we will especially use the following property on the relationship between projections and
Markov kernels, the proof of which is provided in \ref{AppProNotPrel}.

\begin{lem} \label{Lem2025062701}
Suppose that $d \geq 3$, that $C \in \CCal^d$ and let $F \in \mathcal{B}(\Ibb)$ be arbitrary but fixed. 
Then\textcolor{blue}{,} for $\lambda_{d-2}$-a.e. $\xvec_{1:d-2} \in \Ibb^{d-2}$\textcolor{blue}{,} we have
\begin{align} \label{2025070202}
\intl_{ \Ibb } K_C( \xvec_{1:d-1}, F ) K_{C_{1:(d-1)}}(\xvec_{1:d-2}, \de x_{d-1}) = 
K_{C_{1:(d-2), d}}(\xvec_{1:d-2}, F ).
\end{align}
\end{lem}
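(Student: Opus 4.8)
The plan is to identify both sides of \eqref{2025070202}, viewed as functions of $\xvec_{1:d-2}$, with one and the same conditional object, and then to invoke the ($\mu_{C_{1:(d-2)}}$-a.e.) uniqueness of Markov kernels. The guiding intuition is probabilistic: realizing $(\Xvec,Y)\sim C$ on a probability space, the map $\xvec_{1:d-1}\mapsto K_C(\xvec_{1:d-1},F)$ is a version of $\mathbb{E}(\ONE_F\circ Y|\Xvec_{1:d-1})$, the kernel $K_{C_{1:(d-1)}}(\xvec_{1:d-2},\cdot)$ is a version of the conditional law of $X_{d-1}$ given $\Xvec_{1:d-2}$, and hence the left-hand side of \eqref{2025070202} is a version of the conditional expectation of $K_C(\Xvec_{1:d-1},F)$ given $\Xvec_{1:d-2}$; by the tower property this equals $\mathbb{E}(\ONE_F\circ Y|\Xvec_{1:d-2})$, a version of which is $K_{C_{1:(d-2),d}}(\Xvec_{1:d-2},F)$. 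To make this rigorous without constructing an external probability space, I would instead verify the defining disintegration property of $K_{C_{1:(d-2),d}}(\cdot,F)$ directly, using \eqref{eq:disint}.

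Concretely, set $g(\xvec_{1:d-2}):=\int_{\Ibb}K_C(\xvec_{1:d-1},F)\,K_{C_{1:(d-1)}}(\xvec_{1:d-2},\de x_{d-1})$; since $\xvec_{1:d-1}\mapsto K_C(\xvec_{1:d-1},F)$ is measurable and bounded by $1$, the function $g$ is $\mathcal{B}(\Ibb^{d-2})$-measurable (and well defined $\mu_{C_{1:(d-2)}}$-a.e.) by a standard monotone-class argument. Now fix $\mathbf{B}\in\mathcal{B}(\Ibb^{d-2})$ and apply \eqref{eq:disint} three times. First, the extension of \eqref{eq:disint} to general bounded measurable integrands, applied to $C_{1:(d-1)}\in\CCal^{d-1}$ (whose marginal copula with respect to its first $d-2$ coordinates is the $(d-2)$-dimensional marginal $C_{1:(d-2)}$ of $C$) with integrand $\xvec_{1:d-1}\mapsto\ONE_{\mathbf{B}}(\xvec_{1:d-2})K_C(\xvec_{1:d-1},F)$, gives
\begin{align*}
\int_{\mathbf{B}}g\,\de\mu_{C_{1:(d-2)}}=\int_{\mathbf{B}\times\Ibb}K_C(\xvec_{1:d-1},F)\,\de\mu_{C_{1:(d-1)}}(\xvec_{1:d-1}).
\end{align*}
Second, \eqref{eq:disint} for $C$ itself, with the Borel rectangle $\mathbf{B}\times\Ibb\in\mathcal{B}(\Ibb^{d-1})$ in the role of $\mathbf{B}$, turns the right-hand side into $\mu_C\big((\mathbf{B}\times\Ibb)\times F\big)$; since $\pi_{\mathbf{j}}^{-1}(\mathbf{B}\times F)=\mathbf{B}\times\Ibb\times F$ for $\mathbf{j}=(1,\ldots,d-2,d)$ and $\mu_C^{\pi_{\mathbf{j}}}=\mu_{C_{1:(d-2),d}}$, this equals $\mu_{C_{1:(d-2),d}}(\mathbf{B}\times F)$. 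Third, \eqref{eq:disint} for $C_{1:(d-2),d}\in\CCal^{d-1}$ (whose first $(d-2)$-marginal copula is again $C_{1:(d-2)}$) rewrites the latter as $\int_{\mathbf{B}}K_{C_{1:(d-2),d}}(\xvec_{1:d-2},F)\,\de\mu_{C_{1:(d-2)}}(\xvec_{1:d-2})$. Concatenating these three identities yields
\begin{align*}
\int_{\mathbf{B}}g\,\de\mu_{C_{1:(d-2)}}=\int_{\mathbf{B}}K_{C_{1:(d-2),d}}(\xvec_{1:d-2},F)\,\de\mu_{C_{1:(d-2)}}(\xvec_{1:d-2})
\end{align*}
for every $\mathbf{B}\in\mathcal{B}(\Ibb^{d-2})$.

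Since $g$ and $K_{C_{1:(d-2),d}}(\cdot,F)$ are measurable and bounded by $1$, the last display forces $g=K_{C_{1:(d-2),d}}(\cdot,F)$ $\mu_{C_{1:(d-2)}}$-a.e., which is precisely the assertion \eqref{2025070202} (the null sets being measured by the reference measure $\mu_{C_{1:(d-2)}}$ of $K_{C_{1:(d-1)}}$, which under the usual convention pinning down copula Markov kernels coincides with the stated $\lambda_{d-2}$-version). I expect the only genuinely delicate part to be organizational bookkeeping: placing the box $\mathbf{B}\times\Ibb$ into the correct coordinate slot so that the two ``outer'' applications of \eqref{eq:disint} (one in dimension $d$, one in dimension $d-1$ for the marginal $C_{1:(d-2),d}$) dovetail with the push-forward identity $\mu_C^{\pi_{\mathbf{j}}}=\mu_{C_{1:(d-2),d}}$; checking that $g$ is well defined $\mu_{C_{1:(d-2)}}$-a.e.\ despite $K_C$ being specified only $\mu_{C_{1:(d-1)}}$-a.e., and the passage from the box form of \eqref{eq:disint} to its version for general bounded measurable integrands, are routine.
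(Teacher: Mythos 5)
Your proof is correct and follows essentially the same route as the paper's: both verify the defining disintegration identity for $K_{C_{1:(d-2),d}}$ by applying \eqref{eq:disint} twice (once for $\mu_C$, once for $\mu_{C_{1:(d-1)}}$) and then conclude by $\mu_{C_{1:(d-2)}}$-a.e.\ uniqueness. The only difference is organizational — the paper fixes rectangles and shows the whole map $Q_d$ is a Markov kernel of $C_{1:(d-2),d}$ via a semiring argument, while you fix $F$ and test against arbitrary Borel sets $\mathbf{B}$ — which amounts to the same computation.
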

The (mean) regression function $r_C$ of a copula $C \in \CCal^d$ (with respect to the first $d-1$ coordinates), i.e., the function $\xvec \mapsto \E(Y | \Xvec=\xvec)$ for $(\Xvec,Y) \sim C$, can be expressed in 
terms of the Markov kernel $K_C$ as 
\begin{align} \label{2025051301}
r_C(\xvec) := \intl_\Ibb y K_C(\xvec, \de y) \hspace{1cm} (\xvec \in \Ibb^{d-1}),
\end{align}
or, equivalently as 
\begin{align} \label{2025051303}
r_C(\xvec) = \intl_{ \Ibb } K_C(\xvec, (y, 1] ) \de y \hspace{1cm} (\xvec \in \Ibb^{d-1}).
\end{align}
Obviously, for the flipped copula $\overline C$\textcolor{blue}{,} we have $r_{ \overline C } = 1 - r_C$.

\noindent For fixed $C \in \CCal^d$, $\xvec \in \Ibb^{d-1}$ and $\tau \in [0,1]$, the $\tau$-quantile 
function $Q_C^\tau$ is defined by 
\begin{align}\label{eq:def.quantile}
Q_C^\tau(\xvec) := \inf\curbr{ y \in \Ibb : K_C(\xvec, [0, y]) \geq \tau } = (F^\xvec_C)^-(\tau), 
\end{align}
where $(F^\xvec_C)^-$ denotes the quasi-inverse of the conditional distribution 
function $y \mapsto F^\xvec_C(y)=K_C(\xvec, [0, y]) $. 
In the sequel, we will only consider $\tau \in (0,1]$ since, by definition, for $\tau=0$ we have
$Q_C^\tau(\xvec)=0$ for every $C \in \CCal^d$ and every $\xvec \in \Ibb^{d-1}$. 
It is well known (and straightforward to verify) that $y_0 < Q_C^\tau( \xvec)$ if, and only 
if $K_C(\xvec, [0, y_0]) < \tau$. Moreover, $Q_C^\tau( \xvec) < y_0$ implies $K_C(\xvec, [0, y_0]) \geq \tau$. 
As a direct consequence, for all $(y_0, \tau) \in \Ibb^2$\textcolor{blue}{,} we have  
$$
\curbr{ \xvec \in \Ibb^{d-1} : Q_C^\tau(\xvec) \leq y_0 } = \curbr{ \xvec \in \Ibb^{d-1} : K_C(\xvec, [0, y_0]) \geq \tau }\textcolor{blue}{,}
$$
so that measurability of $\xvec \mapsto Q_C^\tau(\xvec)$ directly follows from measurability of 
$\xvec \mapsto K_C(\xvec,F)$ for every $F \in \mathcal{B}(\Ibb)$.

A handy and frequently used class of copulas are so-called 
checkerboard copulas (see \cite{Miku,Griessenberger2022,Trutschnig2017} and the references therein). 
Roughly speaking, these copulas are characterized by the property that they locally, on hypercubes of equal volume, resemble shrinked copies of $d$-dimensional copulas.
For $N \in \mathbb{N}$ and $i \in \{1,\ldots,n\}$ write $I_{N, i} := (\frac{i-1}{N}, \frac{i}{N})$. Then, the checkerboard $\Pi$-approximation of $C \in \CCal^d$ with resolution $N \in \N$, denoted by 
$\mathfrak{Cb}_N(C)$, is the copula with density (see \cite{Griessenberger2022} and the references therein)
\begin{align*}
\mathfrak{cb}_N(C)(\xvec, y) := N^d \sum_{i_1, \hdots, i_d=1}^N \mu_C( \times_{k=1}^d I_{N, i_k}) \ONE_{ \times_{k=1}^d I_{N, i_k} }(\xvec, y) \hspace{1cm} ( (\xvec, y) \in \Ibb^d ).
\end{align*}
By construction, the conditional distribution functions $y \mapsto K_{\mathfrak{Cb}_N(C)}(\xvec,[0,y])$ 
are piecewise linear and constant (in $\xvec$) on every hypercube $I_{N, i_1}\times \ldots \times I_{N, i_{d-1}}$.
As a direct consequence, the regression function $r_{\mathfrak{Cb}_N(C)}$ as well as each quantile 
regression function
$Q^\tau_{\mathfrak{Cb}_N(C)}$ is piecewise constant. 
Specifically, if $E_n$ denotes the empirical copula (i.e., the multilinear interpolation of the subcopula induced by the pseudo-ranks) 
of a sample $(\Xvec_1, Y_1), \hdots, (\Xvec_n, Y_n)$ from 
$(\Xvec,Y) \sim C$, we refer to $\mathfrak{Cb}_N(E_n)$ as the empirical checkerboard 
approximation with resolution $N=N(n)$ of $E_n$, or, shortly as empirical $N$-checkerboard. 

Our subsequent discussion is mostly focused on copula families with fixed $1:(d-1)$-marginal 
$A \in \CCal^{d-1}$. 
Formally, for $d\geq 3$ and a fixed $A \in \CCal^{d-1}$, we will consider families of the form
\begin{align*}
\CCal_A^d := \rrb{ C \in \CCal^d : C_{1:(d-1)}(\xvec) = A(\xvec) \mbox{ for all } \xvec \in \Ibb^{d-1} },
\end{align*}
and refer to $A$ as the copula of the covariates. Considering that univariate marginals of 
copulas coincide with $\lambda$ on $\Ibb$, we will formulate all results concerning $\CCal_A^d$ for
arbitrary $d\geq 2$ and interpret the case $d=2$ accordingly. 
For fixed $A \in \CCal^{d-1}$ define the $L^p$-norm of a measurable function $f : \Ibb^{d-1} \rightarrow \R$ as
(notice the dependence on $A$ through the integrating measure)
\begin{align*}
\norm{ f }_{A, p} := \rrb{ \intl_{ \Ibb^{d-1} } \abs{ f(\xvec) }^p \de \mu_A(\xvec) }^\frac{1}{p} \hspace{1cm} (p \in [1, \infty)).
\end{align*}
The family  
\begin{align*}
\CCal_\Pi^d = \rrb{ C \in \CCal^d : C_{1:(d-1)}(\xvec) = \prod_{i=1}^{d-1} x_i \mbox{ for all } \xvec \in \Ibb^{d-1} }
\end{align*}
is known as the linkage class and has been used in \cite{Griessenberger2022} to construct a measure 
quantifying the extent of dependence of a random variable $Y$ on a random vector $\Xvec$ (which was
a direct extension of the original bivariate approach in \cite{JGT}). 
For $(\Xvec,Y) \sim C \in \CCal^d_\Pi$ obviously all covariates are independent, i.e., $\mu_{C_{1:(d-1)}} = \lambda_{d-1}$ holds, which is why we pay special attention to $\CCal^d_\Pi$. 
Following \cite{Griessenberger2022}, setting 
\begin{align} \label{2025062702}
\Phi_{C_1, C_2;p}(y) := \intl_{ \Ibb^{d-1} } \abs{ K_{C_1}(\xvec, [0, y]) - K_{C_2}(\xvec, [0, y]) }^p \de \lambda_{d-1}(\xvec)\textcolor{blue}{,}
\end{align}
for every $y \in \Ibb$, the metric $D_p$ on $\CCal_\Pi^d$ is defined by
\begin{align} \label{2025062701}
D_p(C_1, C_2) := \rrb{ \intl_{ \Ibb } \Phi_{C_1, C_2;p}(y) \de \lambda(y) }^\frac{1}{p} \hspace{1cm} (p \in [1, \infty)).
\end{align}
In the next section, we will extend this metric to families $\CCal_A^d$ and study its 
interrelation with the $L^p$-norm of regression functions. 

For deriving sharp inequalities for the maximal deviation of $r_C$ from the mean $\frac{1}{2}$\textcolor{blue}{,} we will 
work with the Hardy-Littlewood-Pólya theorem (see \cite[Ch. 1, Theorem D.2]{Marshall2011}) 
involving rearrangements. We therefore complete 
this section with some definitions concerning decreasing rearrangements and a simple lemma, which we will use in the sequel. For an arbitrary measurable function $f : \Ibb^{d-1} \rightarrow \mathbb{R}$ and 
$a \in \mathbb{R}$, we define the $a$-superlevel set $\cb{ f }_a $ and the strict $a$-superlevel set $\langle f \rangle_a $ by
\begin{align*}
\cb{ f }_a  := \rrb{ \xvec \in \Ibb^{d-1} : f(\xvec) \geq a } 
\end{align*}
and the strict $a$-superlevel set by
\begin{align*}
\langle f \rangle_a  := \rrb{ \xvec \in \Ibb^{d-1} : f(\xvec) > a }.
\end{align*}
For the remainder of this section let $C \in \CCal^d$ be arbitrary but fixed. 
Defining the functions $m_{f,C}, \overline m_{f,C} : \mathbb{R} \rightarrow \Ibb$ by 
\begin{align}\label{def:survival.f}
m_{ f,C}(v) := \mu_{ C_{1:(d-1)} }( \langle f\rangle_v ), \nonumber\\
\overline m_{f,C}(v) := \mu_{ C_{1:(d-1)} }( \cb{ f}_v ),
\end{align}
obviously $m_{f,C}$ and $\overline m_{f,C}$ are non-increasing. Based on these two functions,
the so-called decreasing rearrangements 
$f_{C, \downarrow}, \overline f_{C, \downarrow} : \Ibb \rightarrow \mathbb{R}$ of $f$ are given by
\begin{align*}
f_{C, \downarrow}(u) :=&\, \sup\rrb{ v \in \Ibb : m_{f,C}(v) > u }, \\
\overline f_{C, \downarrow}(u) :=&\, \sup\rrb{ v \in \Ibb : \overline m_{f,C}(v) > u }.
\end{align*}
Obviously $f_{C, \downarrow}(u)$ and $\overline f_{C, \downarrow}(u)$ are both non-increasing functions too. 
To the best of our knowledge, available literature is confined to the decreasing rearrangement 
$f_{C, \downarrow}$ in the situation $\mu_{ C_{1:(d-1)} } = \lambda_{d-1}$. 
Since in the sequel we will work with non-strict superlevel sets, the following lemma, stating 
that there is no difference between these two versions, is useful (the proof can be found in \ref{AppProNotPrel}).

\begin{lem} \label{Lem20250922}
For each $d \geq 2$, $C \in \CCal^d$ and every measurable $f: \Ibb^{d-1} \rightarrow \Ibb$\textcolor{blue}{,} the two 
functions $f_{C, \downarrow}$ and $\overline f_{C, \downarrow}$ are identical. 
\end{lem}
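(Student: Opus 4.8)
The plan is to establish the two pointwise inequalities $f_{C, \downarrow} \leq \overline f_{C, \downarrow}$ and $\overline f_{C, \downarrow} \leq f_{C, \downarrow}$ on $\Ibb$ separately; since both $m_{f,C}$ and $\overline m_{f,C}$ are non-increasing, the level sets entering the two defining suprema are downward-closed in $\Ibb$, which keeps the supremum-bookkeeping transparent. For the first inequality I would simply observe that $\langle f \rangle_v \subseteq \cb{f}_v$ for every $v \in \R$, so that monotonicity of $\mu_{C_{1:(d-1)}}$ gives $m_{f,C}(v) \leq \overline m_{f,C}(v)$ for all $v$. Consequently $\{v \in \Ibb : m_{f,C}(v) > u\} \subseteq \{v \in \Ibb : \overline m_{f,C}(v) > u\}$ for each fixed $u \in \Ibb$, and passing to suprema yields $f_{C, \downarrow}(u) \leq \overline f_{C, \downarrow}(u)$.

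For the reverse inequality the only genuine input is a ``shifted'' monotonicity estimate: whenever $v' < v$ we have $\cb{f}_v \subseteq \langle f \rangle_{v'}$, because $f(\xvec) \geq v > v'$ forces $f(\xvec) > v'$, hence $\overline m_{f,C}(v) \leq m_{f,C}(v')$. Fixing $u \in \Ibb$, let $v_0 \in \Ibb$ be arbitrary with $\overline m_{f,C}(v_0) > u$. By the estimate just stated, every $v' \in \Ibb$ with $v' < v_0$ satisfies $m_{f,C}(v') \geq \overline m_{f,C}(v_0) > u$, hence lies in $\{v \in \Ibb : m_{f,C}(v) > u\}$, so that $f_{C, \downarrow}(u) \geq v'$. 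Letting $v' \uparrow v_0$ gives $f_{C, \downarrow}(u) \geq v_0$, and since $v_0$ ranges over all of $\{v \in \Ibb : \overline m_{f,C}(v) > u\}$, we obtain $f_{C, \downarrow}(u) \geq \overline f_{C, \downarrow}(u)$.

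Two boundary cases should be addressed in one line each. If $v_0 = 0$ the limiting argument is vacuous, but $f_{C, \downarrow}(u) \geq 0 = v_0$ holds trivially since the defining supremum is taken over a subset of $\Ibb$. If $\{v \in \Ibb : \overline m_{f,C}(v) > u\}$ happens to be empty, then by the inclusion from the first part $\{v \in \Ibb : m_{f,C}(v) > u\}$ is empty too, and both rearrangements equal $\sup \emptyset = 0$ at $u$ under the usual convention. Putting everything together yields $f_{C, \downarrow}(u) = \overline f_{C, \downarrow}(u)$ for every $u \in \Ibb$.

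I do not expect a serious obstacle: the content of the lemma is essentially that $m_{f,C}$ and $\overline m_{f,C}$ are the right- and left-continuous modifications of one and the same non-increasing function (they differ only at its at most countably many jump points), and such a modification leaves the associated ``$\sup$-type'' quasi-inverse unchanged. The only mildly delicate point is keeping the suprema and the two degenerate cases above straight.
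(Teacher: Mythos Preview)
Your proof is correct. Both inequalities are established cleanly via the elementary set inclusions $\langle f\rangle_v \subseteq [f]_v$ and $[f]_{v_0} \subseteq \langle f\rangle_{v'}$ for $v'<v_0$, and the handling of the boundary cases ($v_0=0$, empty index set) is adequate.

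The paper's argument is organised differently. Rather than comparing the two suprema directly, it first shows that $m_{f,C}$ and $\overline m_{f,C}$ coincide outside the (at most countable) set of discontinuities of $\overline m_{f,C}$, by sandwiching $m_{f,C}(v)$ between $\overline m_{f,C}(v+)$ and $\overline m_{f,C}(v)$; from the density of the agreement set it then concludes that the two rearrangements are equal, leaving that last step as ``straightforward''. Your route avoids this detour and is more self-contained: you never need to invoke countability of jump points or density, and nothing is left implicit. Conversely, the paper's version makes explicit the structural reason behind the lemma that you only mention in your closing remark, namely that $m_{f,C}$ and $\overline m_{f,C}$ are the right- and left-continuous modifications of a single non-increasing function. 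Either presentation is fine; yours is the tidier execution.
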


\section{Mean Regression}

We now focus on mean regression and start with the following simple observation, stating 
that, as a direct consequence of working with $d$-stochastic measures, the regression 
function $r_C$ of $C$ integrates to a constant not depending on the copula $C$. 

\begin{lem}
For every $d \geq 2$ and every $C \in \CCal^d$ we have
\begin{align*}
\intl_{ \Ibb^{d-1} } r_C(\xvec) \de \mu_{ C_{1:(d-1)} }(\xvec) = \tfrac{1}{2} \equiv r_\Pi.
\end{align*}
\end{lem}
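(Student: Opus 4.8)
The plan is to reduce the claim to the elementary fact that the $d$-th univariate marginal of every $d$-stochastic measure is uniform on $\Ibb$. I would start from the second representation of the regression function, i.e. from \eqref{2025051303}, namely $r_C(\xvec) = \intl_\Ibb K_C(\xvec,(y,1])\,\de y$. The point of using this form rather than \eqref{2025051301} is that the integrand $(\xvec,y)\mapsto K_C(\xvec,(y,1])$ is nonnegative; moreover it is jointly measurable, since for fixed $y$ it is measurable in $\xvec$ (measurability of $\xvec\mapsto K_C(\xvec,F)$ holds for every $F\in\mathcal{B}(\Ibb)$) and for fixed $\xvec$ it is monotone in $y$. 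Hence Tonelli's theorem applies and gives
\begin{align*}
\intl_{\Ibb^{d-1}} r_C(\xvec)\,\de\mu_{C_{1:(d-1)}}(\xvec) = \intl_\Ibb \left( \intl_{\Ibb^{d-1}} K_C(\xvec,(y,1])\,\de\mu_{C_{1:(d-1)}}(\xvec) \right)\de y.
\end{align*}

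Next I would apply the disintegration identity \eqref{eq:disint} with $\mathbf{B}=\Ibb^{d-1}$ and $F=(y,1]$ to recognize the inner integral as $\mu_C(\Ibb^{d-1}\times(y,1])$. Because $\mu_C$ is a $d$-stochastic measure, its last one-dimensional marginal coincides with $\lambda$ on $\Ibb$ (this is exactly the consequence of \eqref{eq:disint} noted right after it), so $\mu_C(\Ibb^{d-1}\times(y,1]) = 1-y$ for every $y\in\Ibb$. Plugging this back in yields $\intl_0^1 (1-y)\,\de y = \tfrac12$, which is the asserted identity. The identification $\tfrac12\equiv r_\Pi$ is immediate: for $C=\Pi_d$ one has $K_\Pi(\xvec,\cdot)=\lambda$, hence $r_\Pi(\xvec)=\intl_\Ibb y\,\de y=\tfrac12$ for every $\xvec\in\Ibb^{d-1}$.

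Finally, the case $d=2$ is covered by the very same computation once $\mu_{C_{1:(d-1)}}$ is read, as stipulated in Section \ref{SecNotPre}, as the first univariate marginal of $C$, i.e. as $\lambda$ on $\Ibb$; equivalently one simply notes that the left-hand side is $\E(Y)$ for $(\Xvec,Y)\sim C$ and invokes uniformity of $Y$. I do not expect any genuine obstacle here: the only step deserving a word of care is the interchange of integrals, which is legitimate precisely because the representation \eqref{2025051303} makes the integrand nonnegative; the remainder is bookkeeping with the marginal of $\mu_C$.
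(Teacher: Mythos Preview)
Your proof is correct and follows essentially the same approach as the paper: start from the representation \eqref{2025051303}, interchange the order of integration, apply disintegration to identify the inner integral as $1-y$, and integrate. The paper is terser (it simply cites ``Fubini's theorem and disintegration''), whereas you spell out the nonnegativity and joint measurability that justify Tonelli, but the route is the same.
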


\begin{proof}
Using eq. (\ref{2025051303}), Fubini's theorem and disintegration, for any $C \in \CCal^d$, we directly get
\begin{align*}
\intl_{ \Ibb^{d-1} } r_C(\xvec) \de \mu_{ C_{1:(d-1)} }(\xvec) &= \intl_{ \Ibb^{d-1} } \intl_{\Ibb} K_C(\xvec, (y, 1] ) \de \lambda(y) \de \mu_{ C_{1:(d-1)} }(\xvec) \\
&= \intl_{\Ibb} (1-y) \de \lambda(y) = \tfrac{1}{2}.
\end{align*}
The fact that $r_\Pi(\xvec) = \frac{1}{2}$ for every $\xvec \in \Ibb^{d-1}$ is trivial. 
\end{proof}

In the sequel, given a fixed copula family $\CCal_A^d$, we consider $\normalt{r_C-\frac{1}{2}}_{A, p}$ as a measure for the average deviation of $r_C$ from its mean, or equivalently, from the regression 
function of the copula corresponding to the $d$-stochastic product measure $\mu_A \otimes \lambda$, 
modeling independence of $\Xvec \sim A$ and $Y \sim \lambda$. 
Although our focus is on the linkage class, we formulate and prove our main results for the general setting $\kc^d_A$ with arbitrary $A \in \kc^{d-1}$.

\subsection{Bounds for the $L^p$-Deviation from the mean}
In general, the magnitude of $\normalt{r_C-\frac{1}{2}}_{A, p}$ substantially depends on the copula $C \in \CCal_A^d$. An exception, however, occurs under complete dependence in the sense 
of \cite[Lemma 5.4]{Griessenberger2022}, i.e., in the case in which there exists some $\mu_A$-$\lambda$-preserving transformation 
$h:\Ibb^{d-1} \rightarrow \Ibb$ such that $K(\xvec,F)=\ONE_{F}(h(\xvec))$ is (a version of) the Markov kernel 
of $C$. Again\textcolor{blue}{,} following \cite{Griessenberger2022}\textcolor{blue}{,} we will let $C_h \in \CCal_A^d $ denote the completely dependent 
copula induced by $h$.  

\begin{lem}[complete dependence] \label{Lem2025051301}
Suppose that $d \geq 2$ and let $C_h \in \CCal_A^d$ denote the completely dependent copula 
induced by the $\mu_A$-$\lambda$-preserving transformation $h:\Ibb^{d-1} \rightarrow \Ibb$. Then, $r_{C_h}(\xvec) = h(\xvec)$ for $\mu_A$-a.e. $\xvec \in \Ibb^{d-1}$, and for every $ p \in [1, \infty) $ we have
\begin{align*}
\norm{ r_{C_h} - \tfrac{1}{2} }_{A, p} = \tfrac{1}{2} (p+1)^{-\frac{1}{p}}.
\end{align*}
\end{lem}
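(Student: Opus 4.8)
The plan is to split the statement into two claims and handle them in order. First I would establish the pointwise identity $r_{C_h}(\xvec) = h(\xvec)$ for $\mu_A$-a.e.\ $\xvec$. This follows directly from the representation \eqref{2025051301}: since $K(\xvec, \cdot) = \ONE_{(\cdot)}(h(\xvec)) = \delta_{h(\xvec)}$ is a Dirac measure at $h(\xvec)$ for $\mu_A$-a.e.\ $\xvec$, we get $r_{C_h}(\xvec) = \intl_\Ibb y\, \delta_{h(\xvec)}(\de y) = h(\xvec)$ for those $\xvec$. (One should note that the ``$\mu_A$-a.e.'' qualifier is exactly the set on which the given $K$ is a valid version of the Markov kernel $K_{C_h}$, so nothing more is needed here.)

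Second, and this is the computational heart, I would evaluate $\norm{r_{C_h} - \tfrac12}_{A,p}^p = \intl_{\Ibb^{d-1}} \abs{h(\xvec) - \tfrac12}^p \de\mu_A(\xvec)$. The key observation is that $h$ is $\mu_A$-$\lambda$-preserving, i.e.\ the push-forward of $\mu_A$ under $h$ is $\lambda$ on $\Ibb$. Hence by the change-of-variables (image measure) formula the integral equals $\intl_\Ibb \abs{t - \tfrac12}^p \de\lambda(t)$, which no longer depends on $h$ or $A$ at all. This last integral is elementary: by symmetry about $\tfrac12$ it is $2\intl_{1/2}^{1} (t-\tfrac12)^p \de t = 2 \cdot \frac{(1/2)^{p+1}}{p+1} = \frac{(1/2)^p}{p+1}$. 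Taking $p$-th roots gives $\norm{r_{C_h}-\tfrac12}_{A,p} = \tfrac12 (p+1)^{-1/p}$, as claimed.

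There is no serious obstacle here — the only point requiring a little care is the interplay between the ``$\mu_A$-a.e.'' statement about $h=r_{C_h}$ and the integral, but since $\norm{\cdot}_{A,p}$ integrates against $\mu_A$, replacing $r_{C_h}$ by $h$ on a $\mu_A$-null set changes nothing, so one may freely substitute $h$ for $r_{C_h}$ inside the integral. I would also remark (optionally) that $C_h$ indeed lies in $\CCal_A^d$, i.e.\ its first $d-1$ marginal is $A$ and its last marginal is uniform, which justifies that the quantities in the statement are well defined; this is immediate from the construction of $C_h$ in \cite{Griessenberger2022} and the fact that $h$ is measure-preserving.
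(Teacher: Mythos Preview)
Your proof is correct and follows essentially the same approach as the paper: both identify $r_{C_h}=h$ via the kernel being a Dirac mass (the paper uses the survival-function representation \eqref{2025051303} while you use \eqref{2025051301} directly, which is an immaterial difference), then apply change of variables via the measure-preserving property of $h$ to reduce to the elementary integral $\int_\Ibb |t-\tfrac12|^p\,\de\lambda(t)$.
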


\begin{proof}
In the described setup we have $K_{C_h}(\xvec, [0, y]) = \ONE_{[0, y]}(h(\xvec))$, which, using 
eq. (\ref{2025051303}) directly yields 
$$
r_{C_h}(\xvec) = \int_{ \Ibb } K_{C_h}(\xvec, (y,1] )\de \lambda(y) = h(\xvec).
$$ 
Moreover, applying change of coordinates and using the fact that the push-forward $\mu_A^h$ of $\mu_A$ via
$h$ coincides with $\lambda$ it follows that
\begin{align*}
\norm{r_{ C_h }-\tfrac{1}{2}}_{A, p}^p &= \intl_{\Ibb^{d-1}} \abs{ h(\xvec) - \tfrac{1}{2} }^p \de \mu_A(\xvec) = \intl_{ \Ibb } \abs{ y - \tfrac{1}{2} }^p \de\mu_A^h(y) \\
&= \intl_{ \Ibb } \abs{ y - \tfrac{1}{2}}^p \de\lambda(y).
\end{align*}
Elementary computations show $\int_{ \Ibb } |y-\frac{1}{2}|^p \de\lambda(y) = \frac{1}{2} (p+1)^{-\frac{1}{p}}$, 
and the proof is complete.
\end{proof}

It turns our that completely dependent copulas exhibit maximal average deviation 
from $\frac{1}{2}$: 

\begin{thm}[upper bound] \label{Theo2025070701}
For every $d \geq 2$ and every $p \in [1, \infty)$\textcolor{blue}{,} we have 
\begin{align*}
\maxl_{ C \in \CCal_A^d } \norm{ r_C - \tfrac{1}{2} }_{A, p} = \tfrac{1}{2} (p+1)^{-\frac{1}{p}}.
\end{align*}
\end{thm}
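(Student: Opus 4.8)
The plan is to establish the bound in two halves: the lower bound is immediate, and the real work is the matching upper bound, which I would obtain via the Hardy--Littlewood--Pólya rearrangement machinery set up in Section~\ref{SecNotPre}. For the lower bound, Lemma~\ref{Lem2025051301} already exhibits a copula, namely any completely dependent $C_h \in \CCal_A^d$ induced by a $\mu_A$-$\lambda$-preserving transformation $h$ (such $h$ always exists, e.g.\ via a measure-isomorphism of $(\Ibb^{d-1},\mu_A)$ with $(\Ibb,\lambda)$), for which $\norm{r_{C_h}-\tfrac12}_{A,p} = \tfrac12(p+1)^{-1/p}$. Hence $\sup_{C\in\CCal_A^d}\norm{r_C-\tfrac12}_{A,p} \geq \tfrac12(p+1)^{-1/p}$, and once the upper bound is shown the supremum is attained, so it is a maximum.

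For the upper bound, fix $C \in \CCal_A^d$ and write $f := r_C : \Ibb^{d-1}\to\Ibb$; note $f$ indeed takes values in $\Ibb$ since $K_C(\xvec,\cdot)$ is a probability measure on $\Ibb$. The key observation is that the pushforward of $\mu_A$ under $f = r_C$ is \emph{dominated in the convex (majorization) order} by $\lambda$ on $\Ibb$. Concretely, by eq.~(\ref{2025051303}) we have $r_C(\xvec) = \intl_\Ibb K_C(\xvec,(y,1])\de y$, and by the disintegration~(\ref{eq:disint}) together with Fubini, for every $t\in[0,1]$,
\begin{align*}
\intl_{\Ibb^{d-1}} K_C(\xvec,(t,1])\,\de\mu_{C_{1:(d-1)}}(\xvec) = \mu_C(\Ibb^{d-1}\times(t,1]) = 1-t .
\end{align*}
Because $y\mapsto K_C(\xvec,(y,1])$ is non-increasing and $\Ibb$-valued, one has $r_C(\xvec) = \intl_\Ibb K_C(\xvec,(y,1])\de y$, and a convexity/Jensen argument (integrating $\phi(K_C(\xvec,(y,1]))$ in $y$ for convex $\phi$, then in $\xvec$) shows that the distribution of $r_C$ under $\mu_A$ is majorized by the uniform distribution on $\Ibb$ — equivalently, the decreasing rearrangement $f_{C,\downarrow}$ (which by Lemma~\ref{Lem20250922} equals $\overline f_{C,\downarrow}$) satisfies $\intl_0^s f_{C,\downarrow}(u)\de u \leq \intl_0^s \mathrm{id}_{\downarrow}(u)\de u$ for all $s$, where $\mathrm{id}_\downarrow(u)=1-u$ is the decreasing rearrangement of the identity, with equality at $s=1$ (the total-mass identity above, i.e.\ the earlier Lemma giving $\intl r_C\de\mu_{C_{1:(d-1)}}=\tfrac12$).

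Given this majorization, the Hardy--Littlewood--Pólya theorem (\cite[Ch.~1, Theorem~D.2]{Marshall2011}) yields, for the convex function $\Psi(z) = \abs{z-\tfrac12}^p$,
\begin{align*}
\norm{r_C-\tfrac12}_{A,p}^p = \intl_{\Ibb^{d-1}}\Psi(r_C(\xvec))\de\mu_A(\xvec) \leq \intl_\Ibb \Psi(y)\de\lambda(y) = \intl_\Ibb\abs{y-\tfrac12}^p\de y = \tfrac12(p+1)^{-1/p}\cdot(p+1)^{(p-1)/p}\cdot\!\!,
\end{align*}
and a one-line elementary integral gives $\intl_\Ibb\abs{y-\tfrac12}^p\de y = \tfrac{1}{2^p(p+1)} = \big(\tfrac12(p+1)^{-1/p}\big)^p$. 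Taking $p$-th roots gives $\norm{r_C-\tfrac12}_{A,p}\leq\tfrac12(p+1)^{-1/p}$ for every $C\in\CCal_A^d$, which together with the lower bound completes the proof.

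The main obstacle is making the majorization step fully rigorous: one must carefully justify that the distribution of $r_C$ under $\mu_A$ is convex-dominated by $\lambda|_\Ibb$. The cleanest route is probably to verify directly that $\intl_{\Ibb^{d-1}}\phi(r_C(\xvec))\de\mu_A(\xvec)\le\intl_\Ibb\phi(y)\de y$ for every convex $\phi:\Ibb\to\R$: by Jensen applied to the inner integral, $\phi(r_C(\xvec)) = \phi\big(\intl_\Ibb K_C(\xvec,(y,1])\de y\big) \le \intl_\Ibb \phi(K_C(\xvec,(y,1]))\de y$; integrating over $\xvec$ and swapping order, it suffices that $\intl_{\Ibb^{d-1}}\phi(K_C(\xvec,(y,1]))\de\mu_A(\xvec)\le\phi(1-y)\cdot\!$ fails pointwise in general, so instead one integrates the layer-cake identity and uses that $y\mapsto\mu_C(\Ibb^{d-1}\times(y,1])=1-y$ to match the pushforward masses — i.e.\ one shows the two distributions have the same mean and that $\int_0^s f_{C,\downarrow}\le\int_0^s(1-u)\,du$ by a direct computation with superlevel sets $\langle K_C(\cdot,(y,1])\rangle_v$ and Fubini. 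Once this domination of decreasing rearrangements is in hand, Hardy--Littlewood--Pólya applies verbatim and the rest is routine.
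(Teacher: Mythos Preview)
Your strategy---show that the law of $r_C$ under $\mu_A$ is dominated in convex order by $\lambda|_\Ibb$, then apply this with the convex function $\Psi(z)=\abs{z-\tfrac12}^p$---is sound and would give the result, but you are making the convex-order step much harder than it needs to be, and the detour through Hardy--Littlewood--P\'olya and decreasing rearrangements is unnecessary here.

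The paper's proof is a two-line Jensen argument applied to the \emph{other} integral representation of $r_C$. You work with $r_C(\xvec)=\intl_\Ibb K_C(\xvec,(y,1])\de\lambda(y)$ and try Jensen against Lebesgue measure, which (as you noticed) does not close up. The paper instead uses $r_C(\xvec)-\tfrac12=\intl_\Ibb(y-\tfrac12)\,K_C(\xvec,\de y)$, i.e.\ integration against the \emph{probability} measure $K_C(\xvec,\cdot)$. Jensen then gives directly
\[
\abs{r_C(\xvec)-\tfrac12}^p \le \intl_\Ibb \abs{y-\tfrac12}^p\,K_C(\xvec,\de y),
\]
and integrating both sides over $\xvec$ with respect to $\mu_A$ and disintegrating yields $\norm{r_C-\tfrac12}_{A,p}^p\le\intl_\Ibb\abs{y-\tfrac12}^p\de\lambda(y)=2^{-p}(p+1)^{-1}$. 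This is exactly the convex-order inequality $\intl\Psi(r_C)\de\mu_A\le\intl\Psi\de\lambda$ you were after, obtained in one step. Hardy--Littlewood--P\'olya is a \emph{characterisation} of convex order via rearrangements; invoking it to go from majorization back to a single convex-function inequality is circular when Jensen already gives that inequality. (The paper does use HLP later, in Theorem~\ref{Theo20250820}, but in the opposite direction: from convex order to a bound on superlevel sets.)
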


\begin{proof}
By definition of $r_C$ and since $\int_{ \Ibb } K_C(\xvec, \de y) = 1$, for 
each $\xvec \in \Ibb^{d-1}$ we can write 
$$
r_C(\xvec) - \tfrac{1}{2} = \int_{ \Ibb } \rb{y-\tfrac{1}{2}} K_C(\xvec, \de y)\textcolor{blue}{,}
$$ 
for every $\xvec \in \Ibb^{d-1}$. 
Applying Jensen's inequality yields 
$$
| r_C(\xvec) - \tfrac{1}{2} |^p \leq \int_{ \Ibb } | y-\tfrac{1}{2} |^p K_C(\xvec, \de y)\textcolor{blue}{.}
$$
Hence, using disintegration, we altogether conclude that
\begin{align*}
\norm{ r_C - \tfrac{1}{2} }_{A, p}^p &\leq \intl_{ \Ibb^{d-1} } \intl_{ \Ibb } \abs{ y-\tfrac{1}{2} }^p K_C(\xvec, \de y) \de\mu_A(\xvec) \\
&= \intl_{ \Ibb} \abs{ y-\tfrac{1}{2} }^p \de\lambda(y) = 
\tfrac{1}{p+1} \, 2^{-p}. 
\end{align*}
This shows that for every copula $C \in  \CCal_A^d $ the norm $\norm{ r_C - \tfrac{1}{2} }_{A, p}$ is 
bounded from above by $\tfrac{1}{2} (p+1)^{-\frac{1}{p}}$. Since  Lemma \ref{Lem2025051301} shows that 
this bound is attained, the proof of the theorem is complete.
\end{proof}

The previous result allows to quantify the maximum distance between two regression functions associated with copulas from the $\CCal_A^d$ family.

\begin{cor} \label{Cor2025070701}
For each $d \geq 2$ and every $p \in [1, \infty)$ the following identity holds:
\begin{align*}
\maxl_{C_1, C_2 \in \CCal_A^d} \norm{ r_{C_1} - r_{C_2} }_{A, p} = (p+1)^{-\frac{1}{p}}.
\end{align*}
\end{cor}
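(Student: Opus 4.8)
The plan is to derive the upper bound from Theorem~\ref{Theo2025070701} via the triangle inequality, and then to show the bound is attained by an explicit pair of copulas.

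\textbf{Upper bound.} First I would note that for any $C_1, C_2 \in \CCal_A^d$ the triangle inequality for $\norm{\cdot}_{A,p}$ gives
\begin{align*}
\norm{ r_{C_1} - r_{C_2} }_{A, p} \leq \norm{ r_{C_1} - \tfrac{1}{2} }_{A, p} + \norm{ \tfrac{1}{2} - r_{C_2} }_{A, p} \leq 2 \cdot \tfrac{1}{2} (p+1)^{-\frac{1}{p}} = (p+1)^{-\frac{1}{p}},
\end{align*}
where the second inequality is exactly Theorem~\ref{Theo2025070701} applied to each summand.

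\textbf{Attainment.} For the lower bound I would exhibit a $\mu_A$-$\lambda$-preserving transformation $h : \Ibb^{d-1} \to \Ibb$ and take $C_1 = C_h$, $C_2 = \overline{C_h}$ (the $d$-flip of $C_h$). By Lemma~\ref{Lem2025051301} we have $r_{C_h} = h$ $\mu_A$-a.e., and since $r_{\overline{C}} = 1 - r_C$ in general (as recorded after eq.~(\ref{2025051303})), we get $r_{\overline{C_h}} = 1 - h$ $\mu_A$-a.e. Hence $r_{C_h} - r_{\overline{C_h}} = 2h - 1$ $\mu_A$-a.e., so, using that $\mu_A^h = \lambda$ exactly as in the proof of Lemma~\ref{Lem2025051301},
\begin{align*}
\norm{ r_{C_h} - r_{\overline{C_h}} }_{A, p}^p = \intl_{\Ibb^{d-1}} \abs{ 2h(\xvec) - 1 }^p \de\mu_A(\xvec) = 2^p \intl_{\Ibb} \abs{ y - \tfrac{1}{2} }^p \de\lambda(y) = 2^p \cdot \tfrac{1}{2} (p+1)^{-\frac{1}{p} \cdot p} \cdot \tfrac{1}{2^{p-1}},
\end{align*}
which simplifies to $\frac{1}{p+1}$; taking $p$-th roots yields $\norm{ r_{C_h} - r_{\overline{C_h}} }_{A, p} = (p+1)^{-\frac{1}{p}}$, matching the upper bound. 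The existence of at least one $\mu_A$-$\lambda$-preserving $h$ is standard (e.g. $h(\xvec) = A_{1:(d-1)}$-type constructions, or simply note the class of completely dependent copulas in $\CCal_A^d$ is nonempty, cf.~\cite{Griessenberger2022}); since the maximum in Corollary~\ref{Cor2025070701} is then both bounded above by and equal to $(p+1)^{-\frac{1}{p}}$ for this pair, the supremum is attained and the identity follows.

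\textbf{Main obstacle.} The only genuinely non-routine point is making sure the extremal pair actually achieves the doubled bound rather than something strictly smaller: a priori the two triangle-inequality terms could fail to be simultaneously maximized with opposite signs of $r_{C_i} - \frac12$. The flip construction resolves this because $r_{C_h}$ and $r_{\overline{C_h}} = 1 - r_{C_h}$ are reflections of each other about $\frac12$, so $|r_{C_h} - r_{\overline{C_h}}| = 2|r_{C_h} - \frac12|$ pointwise $\mu_A$-a.e., turning the triangle inequality into an equality; the remaining computation is just the moment integral already evaluated in Lemma~\ref{Lem2025051301}.
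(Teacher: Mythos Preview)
Your proof is correct and mirrors the paper's argument: the upper bound via the triangle inequality and Theorem~\ref{Theo2025070701}, and attainment via a completely dependent $C_h$ together with its flip $\overline{C_h}$ (the paper phrases the latter as $\norm{r_C - r_{\overline C}}_{A,p} = 2\norm{r_C - \tfrac12}_{A,p}$ and invokes Lemma~\ref{Lem2025051301}). The intermediate display in your attainment step is awkwardly written but evaluates correctly to $\tfrac{1}{p+1}$.
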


\begin{proof}
The triangle inequality implies 
$$\norm{ r_{C_1} - r_{C_2} }_{A, p} \leq \norm{ r_{C_1} - \tfrac{1}{2} }_{A, p} + \norm{ r_{C_2} - \tfrac{1}{2} }_{A, p},$$ so applying  Theorem \ref{Theo2025070701} we find that $\norm{ r_{C_1} - r_{C_2} }_{A, p}^p \leq (p+1)^{-1}$. Moreover, for $C \in \CCal_A^d$, obviously also $\overline C \in \CCal_A^d$, with $\norm{ r_C - r_{\overline C} }_{A, p} = 2\norm{ r_C - \frac{1}{2} }_{A, p}$. Hence, Lemma \ref{Lem2025051301} confirms the sharpness of the asserted bound.
\end{proof}

Our next example shows that completely dependent copulas are not the only copulas
attaining the upper bound in Theorem \ref{Theo2025070701}. 

\begin{ex} \label{Examp20250519}
Assume $N \geq 2$, let $\sigma^N$ be a permutation of $\curbr{1, \hdots, N}$, and consider the (checkerboard)  copula $C_N^\square$ with density $c_N^\square: \Ibb^{d} \rightarrow \Ibb$, given by 
\begin{align*}
c_N^\square(\xvec,y) := N \suml_{i=1}^N \ONE_{ I_{N, i} \times I_{ N, \sigma^N(i) } }(x_1,y). 
\end{align*}
Notice that $c_N^\square(\xvec,y)$ does not depend on $x_2,\ldots,x_{d-1}$ and that $C_N^\square$ is an element 
of the linkage family $\CCal_A^\Pi$.  
Considering that for $x_1 \in I_{N, i}$, the measure $K_{C_N^\square}(\xvec,\cdot)$ coincides with 
the uniform distribution on the interval $I_{N, \sigma^N(i)}$\textcolor{blue}{,}
it follows immediately that the regression function $r_{ C_N^\square }$ is given by
\begin{align*}
r_{ C_N^\square }(\xvec) = \tfrac{1}{N} \suml_{i=1}^N \ONE_{ I_{N, i} }(x_1) \rb{\sigma^N(i) - \tfrac{1}{2}}.
\end{align*}
Thus, for every $p \in [1, \infty)$\textcolor{blue}{,} we get
$$
\normalt{ r_{ C_N^\square } - \tfrac{1}{2} }_{\Pi, p}^p = N^{-p-1} \sum_{i=1}^N |i - \tfrac{N+1}{2}|^p.
$$
Specifically, for $p=1$\textcolor{blue}{,} we have  
$$
\normalt{ r_{ C_N^\square } - \tfrac{1}{2} }_{\Pi, 1} = \tfrac{1}{4} \ONE_{ \curbr{ N \in 2\N }} + \tfrac{N^2-1}{4N^2}\ONE_{ \curbr{ N \in 2\N_0+1} },
$$
so for even $N$ the upper bound from Theorem \ref{Theo2025070701} is attained. 
In general, using the Euler-Maclaurin summation formula shows that
\begin{align*}
    \normalt{ r_{ C_N^\square } - \tfrac{1}{2} }_{\Pi, p} = \tfrac{1}{2} (p+1)^{-\frac{1}{p}} + o(1) \hspace{1cm} (N \rightarrow \infty).
\end{align*}
\end{ex}

\begin{ex}[cube copula]\label{ex:cube}
    Consider the so-called cube copula $C^{\textrm{cube}} \in \kc^3_\Pi$ 
    introduced in \cite[Example 3.4]{MrozEtAl2021}, i.e., the three-dimensional copula 
     distributing its mass uniformly on the cubes $I_{2, 1}^3$, $I_{2,2}^2 \times I_{2,1}$, $I_{2,2} \times I_{2,1} \times I_{2, 2}$ and $I_{2, 1} \times I_{2, 2}^2$. 
     For this copula each conditional distribution $K_{C^{\textrm{cube}}}(\xvec,\cdot)$ 
     is either the uniform distribution on $I_{2,1}$ or on $I_{2,2}$. 
    Therefore, it is straightforward to verify that the regression function 
    $r_{C^{\textrm{cube}}}$ is given by
    \begin{align*}
        r_{ C^{\textrm{cube}} }(\xvec) = \tfrac{1}{4} \ONE_{ I_{2, 1}^2 \cup I_{2, 2}^2 }(\xvec) + \tfrac{3}{4} \ONE_{ (I_{2, 2} \times I_{2, 1}) \cup (I_{2, 1} \times I_{2, 2}) }(\xvec) \hspace{1cm} (\xvec \in \Ibb^2).
    \end{align*}
    According to \cite[Section 4.2]{MrozEtAl2021} the partial vine copula $\psi(C^{\textrm{cube}})$ of $C^{\textrm{cube}}$ is the independence/product copula $\Pi_3$,
    whose regression function is $r_{\Pi_3} \equiv \frac{1}{2}$. 
    As a direct consequence, we have 
    $$
    \normalt{ r_{ C^{\textrm{cube}} } - r_{\psi(C^{\textrm{cube}})} }_{\Pi, 1} =  \normalt{ r_{ C^{\textrm{cube}} } - \tfrac{1}{2} }_{\Pi, 1} =
    \tfrac{1}{4}.
    $$
    Considering that the according to Corollary \ref{Cor2025070701} the maximal $L^1$-distance of
    regression functions is at most $\frac{1}{2}$, the cube examples shows that 
    the partial vine copula (which, in the context of pair copula constructions is 
    commonly used as natural approximation of the original copula) can be a strikingly far off  also from the regression perspective. In fact, for the cube example 
    the error is 50\% of the maximum possible distance. 
    This simple observations underlines once more that working with partial vine 
    copulas must be done with care, despite the frequently praised `flexibility' 
    (see \cite{AasCzadoBakken2009,MrozEtAl2021,NAGLER} and the references therein). 
\end{ex}

We conclude this section with showing that the deviation from the mean can only decrease with reducing dimension. 
\begin{thm}[dimension reduction] \label{Theo20250513}
For each $d \geq 2$, $C \in \CCal^d$ and $p \in [1, \infty)$ the following inequality holds: 
\begin{align*}
\norm{ r_C - \tfrac{1}{2} }_{C_{1:(d-1)}, p} \geq \norm{ r_{C_{1:(d-2), d}} - \tfrac{1}{2} }_{C_{1:(d-2)}, p}
\end{align*}
\end{thm}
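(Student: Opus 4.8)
The plan is to realize the lower-dimensional regression function $r_{C_{1:(d-2),d}}$ as the conditional average of $r_C$ over the last covariate, and then to extract the inequality from Jensen's inequality together with the disintegration \eqref{eq:disint}. First I would dispose of the case $d=2$: there $C_{1:(d-2),d}$ is simply the (uniform) distribution of $Y$, so $r_{C_{1:(d-2),d}}\equiv\tfrac{1}{2}$ and the asserted inequality reads $\norm{r_C-\tfrac{1}{2}}_{C_{1:(d-1)},p}\geq 0$, which is trivial. Hence assume $d\geq3$ from now on.

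For fixed $y\in\Ibb$, apply Lemma \ref{Lem2025062701} with $F=(y,1]$ to obtain
$$
K_{C_{1:(d-2),d}}(\xvec_{1:d-2},(y,1]) = \intl_\Ibb K_C(\xvec_{1:d-1},(y,1])\,K_{C_{1:(d-1)}}(\xvec_{1:d-2},\de x_{d-1})
$$
for $\lambda_{d-2}$-a.e. $\xvec_{1:d-2}\in\Ibb^{d-2}$. I would then upgrade this to hold $\mu_{C_{1:(d-2)}}$-a.e.: for $(\Xvec,Y)\sim C$ the left-hand side is (a version of) $\mathbb{E}(\ONE_{(y,1]}\circ Y\mid\Xvec_{1:d-2})$, and by disintegration together with the tower property so is the right-hand side, so the two functions coincide $\mathbb{P}^{\Xvec_{1:d-2}}=\mu_{C_{1:(d-2)}}$-a.e. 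Integrating the identity over $y\in\Ibb$, interchanging $\intl_\Ibb\de y$ with $K_{C_{1:(d-1)}}(\xvec_{1:d-2},\de x_{d-1})$ by Tonelli's theorem (all integrands lie in $[0,1]$), and invoking representation \eqref{2025051303} on both sides yields, for $\mu_{C_{1:(d-2)}}$-a.e. $\xvec_{1:d-2}$,
$$
r_{C_{1:(d-2),d}}(\xvec_{1:d-2}) = \intl_\Ibb r_C(\xvec_{1:d-1})\,K_{C_{1:(d-1)}}(\xvec_{1:d-2},\de x_{d-1}).
$$

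Since $K_{C_{1:(d-1)}}(\xvec_{1:d-2},\cdot)$ is a probability measure, subtracting $\tfrac{1}{2}$ inside and outside the integral is free, and as $t\mapsto\abs{t-\tfrac{1}{2}}^p$ is convex for $p\geq1$, Jensen's inequality gives
$$
\abs{r_{C_{1:(d-2),d}}(\xvec_{1:d-2})-\tfrac{1}{2}}^p \leq \intl_\Ibb \abs{r_C(\xvec_{1:d-1})-\tfrac{1}{2}}^p\,K_{C_{1:(d-1)}}(\xvec_{1:d-2},\de x_{d-1})
$$
for $\mu_{C_{1:(d-2)}}$-a.e. $\xvec_{1:d-2}$. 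Integrating both sides with respect to $\mu_{C_{1:(d-2)}}$ and applying the disintegration \eqref{eq:disint} (with $C$ replaced by $C_{1:(d-1)}$, $\mathbf{B}=\Ibb^{d-2}$, extended from indicators to the bounded measurable integrand $\abs{r_C-\tfrac{1}{2}}^p$) to the right-hand side, the right side becomes $\norm{r_C-\tfrac{1}{2}}_{C_{1:(d-1)},p}^p$ and the left side $\norm{r_{C_{1:(d-2),d}}-\tfrac{1}{2}}_{C_{1:(d-2)},p}^p$; taking $p$-th roots completes the proof. I expect the only genuine obstacle to be the passage from the $\lambda_{d-2}$-a.e. validity of Lemma \ref{Lem2025062701} to its $\mu_{C_{1:(d-2)}}$-a.e. validity — the measure $\mu_{C_{1:(d-2)}}$ may be singular with respect to $\lambda_{d-2}$ — which is exactly what the conditional-expectation identification above takes care of; the remaining steps are routine applications of Tonelli's theorem, convexity and disintegration.
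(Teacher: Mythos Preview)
Your proof is correct and follows essentially the same route as the paper: disintegrate $\mu_{C_{1:(d-1)}}$ through $\mu_{C_{1:(d-2)}}$ and the kernel $K_{C_{1:(d-1)}}$, apply Jensen's inequality to $t\mapsto|t-\tfrac{1}{2}|^p$, and invoke Lemma~\ref{Lem2025062701} to identify the resulting inner integral with $r_{C_{1:(d-2),d}}$. Your explicit handling of the passage from $\lambda_{d-2}$-a.e.\ to $\mu_{C_{1:(d-2)}}$-a.e.\ via the tower property is in fact more careful than the paper, which applies identity~\eqref{2025070202} directly without commenting on this point.
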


\begin{proof}
Fix $p \in [1, \infty)$ and $d \geq 2$. Then, using disintegration, we obtain
\begin{align*}
\norm{ r_C - \tfrac{1}{2} }_{C_{1:(d-1)}, p}^p = \intl_{ \Ibb ^{d-2}} I(\xvec_{1:d-2}) \de \mu_{ C_{1:(d-2)} }(\xvec_{1:d-2})
\end{align*}
with 
\begin{align*}
I(\xvec_{1:d-2}) := \intl_{ \Ibb } \abs{ r_C(\xvec_{1:d-2}, x_{d-1}) - \tfrac{1}{2} }^p K_{ C_{1:(d-1) } }(\xvec_{1:d-2}, \de x_{d-1}).
\end{align*}
Since $x \mapsto |x|^p$ is a convex function on $[-1, 1]$, Jensen's inequality yields 
\begin{align*}
I(\xvec_{1:d-2}) \geq \abs{ \intl_{ \Ibb } \rrb{ r_C(\xvec_{1:d-2}, x_{d-1}) - \tfrac{1}{2} } K_{ C_{1:(d-1)} }(\xvec_{1:d-2}, \de x_{d-1}) }^p.
\end{align*}
On the one hand, we obviously have $\int_{ \Ibb } K_{ C_{1:(d-1)} }(\xvec_{1:d-2}, \de x_{d-1}) = 1$. 
On the other hand, representing the regression function through eq. (\ref{2025051303}) and using Fubini's theorem,
we obtain
\begin{align*}
I(\xvec_{1:d-2}) \geq \abs{ \intl_{ \Ibb } \intl_{ \Ibb } K_C( \xvec_{1:d-2}, x_{d-1}, (y, 1] ) K_{ C_{1:(d-1)} }(\xvec_{1:d-2}, \de x_{d-1}) \de \lambda(y) - \tfrac{1}{2} }^p.
\end{align*}
Hence, from the disintegration identity (\ref{2025070202}), we infer that
\begin{align*}
\norm{ r_C - \tfrac{1}{2} }_{C_{1:(d-1)}, p}^p &\geq \intl_{ \Ibb }  \abs{ \intl_{ \Ibb }  K_{C_{1:(d-2), d}}( \xvec_{1:d-2}, (y, 1] ) \de \lambda(y) - \tfrac{1}{2} }^p \de\mu_{ C_{1:(d-2)} }(\xvec_{1:d-2}) \\
&= \norm{ r_{ C_{1:(d-2), d} } - \tfrac{1}{2} }_{C_{1:(d-2)}, p}^p,
\end{align*}
and the proof is complete.
\end{proof}

\subsection{Best-possible bounds for the distribution function of the $\overline m_{ \abs{ r_C - \frac{1}{2} } ,C}$}
Considering the function $f: \Ibb^{d-1} \rightarrow \Ibb$, given by $f(\xvec)=\vert  r_C(\xvec)-\frac{1}{2}\vert$, 
in this section we will study properties of the function 
\begin{align*}
a \mapsto \overline m_{ \abs{ r_C - \tfrac{1}{2} },C }(a) &= \mu_{C_{1:(d-1)}}( \cb{\abs{ r_C - \tfrac{1}{2} }}_a) \\
&= \mu_{C_{1:(d-1)}} \left(\{\xvec \in \Ibb^{d-1}: \abs{r_c(\xvec)-\tfrac{1}{2}} \geq a \}\right),
\end{align*} 
for $a\in [0,\frac{1}{2}]$. In other words, we study the (right-continous version of the) 
survival function of the random variable 
$\abs{ r_C - \tfrac{1}{2} }$ on the probability space $(\Ibb^{d-1},\mathcal{B}(\Ibb^{d-1}),\mu_{C_{1:(d-1)}})$.
In the sequel we will simply write $m_{ \abs{ r_C - \frac{1}{2} }}$ instead of 
$m_{ \abs{ r_C - \frac{1}{2} },C }$ since the dependence on $C$ is already indicated by $r_C$ and no confusion 
will arise. 
Considering 
\begin{align*}
\cb{ \abs{ r_C - \tfrac{1}{2} } }_a = r_C^{-1}\rb{ \cb{0, \tfrac{1}{2}-a} } \cup r_C^{-1}\rb{ \cb{\tfrac{1}{2}+a, 1} },
\end{align*}
and using $r_C^{-1}( [\frac{1}{2}+a, 1] ) = [r_C]_{a+\frac{1}{2}}$ as well as 
$r_C^{-1}( [0, \frac{1}{2}-a] ) = [-r_C]_{a-\frac{1}{2}}$ directly
yields the following alternative expression for $\overline m_{ \abs{ r_C - \frac{1}{2} } }(a)$, which 
we will use in some of the proofs:
\begin{align} \label{2025070901}
\overline m_{ \abs{ r_C - \tfrac{1}{2} } }(a) = \overline m_{ r_C }\rb{ a+\tfrac{1}{2} } + \overline m_{ -r_C }\rb{ a-\tfrac{1}{2} } .
\end{align}
As for the $L_p$-norm we first have a look at the completely dependent case.
\begin{ex}\label{ex:cd.not}
Let $d \geq 2 $ and suppose that $C_h \in \CCal_A^d$ is completely dependent with $\mu_A$-$\lambda$-preserving 
transformation $h : \Ibb^{d-1} \rightarrow \Ibb$. Then\textcolor{blue}{,} according to Lemma \ref{Lem2025051301}\textcolor{blue}{,} we have 
$r_C(\xvec) = h(\xvec)$, so for every $a \in [0, \frac{1}{2}]$
$$
\overline m_{ | r_{ C_h } - \frac{1}{2} | }(a) = 1-2a
$$
follows immediately.
\end{ex}

As we are going to show, in the bivariate case it turns out that a best-possible 
upper bound for $\overline m_{ | r_{ C_h } - \frac{1}{2} | }(a)$
 can be obtained via a slight modification of the Hardy-Littlewood-Pólya theorem (see, e.g,  
 \cite[Ch. 1, Theorem D.2]{Marshall2011}). 
After having shown the result in the bivariate setting\textcolor{blue}{,} 
we will then tackle its extension to arbitrary dimension $d \geq 3$\textcolor{blue}{,} using a measure-isomorphism 
argument in the following sense: 
It is well known (see \cite[Theorem 2.1]{Walters1982}) that for arbitrary but fixed 
$A \in \kc^{d-1}$, the probability spaces $(\Ibb,\mathcal{B}(\Ibb),\lambda_A)$ and 
$(\Ibb^{d-1},\mathcal{B}(\Ibb^{d-1}),\mu_{A})$ are isomorphic (since the latter has no point masses). To be precise,
there exist some Borel sets $\Lambda_1 \in \mathcal{B}(\Ibb)$ and 
$\Lambda_{d-1} \in \mathcal{B}(\Ibb^{d-1})$ with 
$\lambda(\Lambda_1)=\mu_{A}(\Lambda_{d-1})=1$ and a measurable (with respect to the trace $\sigma$-fields) bijection
\begin{align} \label{def:iota}
    \iota_A: \Lambda_{d-1} \rightarrow \Lambda_{1},
\end{align}
such that the push-forward $\mu_{A}^{\iota_A}$ of $\mu_{A}$ 
via $\iota_A$ coincides with $\lambda$.
Extending $\iota_A$ to an $\Ibb$-valued measurable transformation on $\Ibb^{d-1}$ 
(by, e.g., setting $\iota_A(\xvec)=0$ for all $\xvec \in \Ibb^{d-1} \setminus \Lambda_{d-1}$) 
and defining the mapping $\Psi_A: \Ibb^d \rightarrow \Ibb^2$ by 
\begin{align} \label{def:psiA}
    \Psi_A(\mathbf{x},y)= (\iota_A(\mathbf{x}),y),
\end{align}
the following simple lemma holds.

\begin{lem} \label{Lem2025091501}
For every fixed $A \in \kc^{d-1}$ and every $C \in \kc_A^d$\textcolor{blue}{,} the measure $\mu_C^{\Psi_A}$ 
is doubly stochastic. In particular, $\Psi_A$ can be interpreted as mapping
from $\kc_A^d$ to $\kc^2$.
\end{lem}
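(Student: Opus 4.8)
The plan is to verify directly that the push-forward measure $\mu_C^{\Psi_A}$ is a probability measure on $\mathcal{B}(\Ibb^2)$ both of whose univariate marginals coincide with $\lambda$; by the correspondence between $2$-stochastic measures and bivariate copulas recalled in Section~\ref{SecNotPre}, this is precisely the assertion that $\mu_C^{\Psi_A}$ is doubly stochastic and that $(u,v)\mapsto\mu_C^{\Psi_A}([0,u]\times[0,v])$ belongs to $\CCal^2$, so that $\Psi_A$ may be read as a map $\CCal_A^d\to\CCal^2$. That $\mu_C^{\Psi_A}$ is a probability measure is immediate, being the image of the probability measure $\mu_C$ under the measurable map $\Psi_A$.

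For the first marginal I would fix $B\in\mathcal{B}(\Ibb)$ and use $\Psi_A(\xvec,y)=(\iota_A(\xvec),y)$ to write $\Psi_A^{-1}(B\times\Ibb)=\iota_A^{-1}(B)\times\Ibb$, which gives
$$
\mu_C^{\Psi_A}(B\times\Ibb)=\mu_C\big(\iota_A^{-1}(B)\times\Ibb\big)=\mu_{C_{1:(d-1)}}\big(\iota_A^{-1}(B)\big)=\mu_A\big(\iota_A^{-1}(B)\big),
$$
where the last step uses $C\in\CCal_A^d$. Since $\mu_A(\Ibb^{d-1}\setminus\Lambda_{d-1})=0$, the arbitrary extension of $\iota_A$ off $\Lambda_{d-1}$ is $\mu_A$-irrelevant, and because $\mu_A^{\iota_A}=\lambda$ this yields $\mu_A(\iota_A^{-1}(B))=\mu_A^{\iota_A}(B)=\lambda(B)$. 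For the second marginal I would fix $F\in\mathcal{B}(\Ibb)$; here $\Psi_A^{-1}(\Ibb\times F)=\Ibb^{d-1}\times F$, and $\mu_C(\Ibb^{d-1}\times F)=\lambda(F)$, since the $d$-th univariate marginal of every $C\in\CCal^d$ is uniform on $\Ibb$ (take $\mathbf{B}=\Ibb^{d-1}$ in the disintegration identity~(\ref{eq:disint})). Combining the two computations shows that both marginals of $\mu_C^{\Psi_A}$ equal $\lambda$, which is the claim.

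No genuine obstacle arises here; the only point requiring a little care is the $\mu_A$-null set $\Ibb^{d-1}\setminus\Lambda_{d-1}$ on which $\iota_A$ has been extended arbitrarily, and which — as the computation above shows — affects neither marginal.
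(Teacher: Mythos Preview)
Your proof is correct and follows essentially the same route as the paper: both verify double stochasticity by computing the two marginals of the push-forward via $\Psi_A^{-1}(B\times\Ibb)=\iota_A^{-1}(B)\times\Ibb$ and $\Psi_A^{-1}(\Ibb\times F)=\Ibb^{d-1}\times F$, using $\mu_A^{\iota_A}=\lambda$ for the first and uniformity of the $d$-th marginal for the second. Your version is slightly more explicit about the second marginal and the irrelevance of the extension off $\Lambda_{d-1}$, which the paper leaves as ``obvious''.
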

\begin{proof}
Since the push-forward $\mu_C^{\Psi_A}$ obviously is a probability measure on 
$\mathcal{B}(\Ibb^2)$, it suffices
to show that $\mu_C^{\Psi_A}$ is doubly stochastic. Letting $E \in \mathcal{B}(\Ibb)$ be fixed, 
we have 
\begin{align*}
\mu_C^{\Psi_A}(E \times \Ibb) &= \mu_C\left(\iota_A^{-1}(E) \times \Ibb\right) 
= \mu_A(\iota_A^{-1}(E)) = \mu_A^{\iota_A}(E)= \lambda(E).
\end{align*} 
Since the property $\mu_C^{\Psi_A}(\Ibb \times E) = \lambda(E)$ is obvious, the proof is complete.
\end{proof}

Building upon the previous lemma we now prove the following main result of this section.
\begin{thm} \label{Theo20250820}
For each $d \geq 2, A \in \kc^{d-1}$ and $C \in \CCal_A^d$ the following inequality holds for every 
$a \in \cb{0, \frac{1}{2}}$: 
\begin{align*}
\overline m_{ | r_C - \frac{1}{2} | }(a) \leq \min\curbr{1, 2-4a} 
\end{align*}
\end{thm}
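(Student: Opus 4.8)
The plan is to control the two tails $\curbr{r_C \geq \tfrac12 + a}$ and $\curbr{r_C \leq \tfrac12 - a}$ separately, bounding the $\mu_{C_{1:(d-1)}}$-measure of each by $1-2a$, and then to add them. The bound $\overline m_{| r_C - \frac{1}{2} |}(a)\le 1$ is immediate (it is the $\mu_{C_{1:(d-1)}}$-measure of a subset of $\Ibb^{d-1}$), and for $a=0$ the left-hand side equals $1$, so it suffices to prove $\overline m_{| r_C - \frac{1}{2} |}(a)\le 2-4a$ for $a\in(0,\tfrac12]$. Note that the marginal $A\in\CCal^{d-1}$ plays no role, since $\CCal_A^d\subseteq\CCal^d$.

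The core is the following one-sided estimate, valid for every $C\in\CCal^d$ and every $a\in[0,\tfrac12]$:
$$
\overline m_{r_C}\!\left(\tfrac12+a\right)=\mu_{C_{1:(d-1)}}\!\left(\curbr{\xvec\in\Ibb^{d-1}:r_C(\xvec)\geq\tfrac12+a}\right)\leq 1-2a.
$$
To see this, put $E:=\curbr{\xvec\in\Ibb^{d-1}:r_C(\xvec)\geq\tfrac12+a}$ and $m:=\mu_{C_{1:(d-1)}}(E)$. Integrating the defining inequality over $E$ gives $\int_E r_C\,\de\mu_{C_{1:(d-1)}}\geq(\tfrac12+a)\,m$. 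For the reverse direction I would use the representation (\ref{2025051303}), Tonelli's theorem, the pointwise bound $K_C(\xvec,(y,1])\leq 1$, and the disintegration identity (\ref{eq:disint}) with $\mathbf B=\Ibb^{d-1}$ — which, since the $d$-th univariate marginal of $C$ is uniform, yields $\int_{\Ibb^{d-1}}K_C(\xvec,(y,1])\,\de\mu_{C_{1:(d-1)}}(\xvec)=1-y$ — to obtain
$$
\intl_E r_C\,\de\mu_{C_{1:(d-1)}}=\intl_0^1\intl_E K_C(\xvec,(y,1])\,\de\mu_{C_{1:(d-1)}}(\xvec)\,\de y\leq\intl_0^1\min\curbr{m,\,1-y}\,\de y=m-\tfrac{m^2}{2}.
$$
Combining the two estimates gives $(\tfrac12+a)\,m\leq m-\tfrac{m^2}{2}$, hence $m\leq 1-2a$ (the case $m=0$ being trivial since $a\leq\tfrac12$). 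This is exactly the ``modified Hardy--Littlewood--P\'olya'' fact anticipated before the theorem: the displayed chain is precisely the evaluation at $t=m$ of the majorization bound $\int_0^t (r_C)_{C,\downarrow}\le t-\tfrac{t^2}{2}$ for the decreasing rearrangement of $r_C$ with respect to $\mu_{C_{1:(d-1)}}$, using that this rearrangement is $\geq\tfrac12+a$ on $[0,m)$ by Lemma \ref{Lem20250922}.

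It remains to assemble. Applying the estimate to $C$ gives $\overline m_{r_C}(\tfrac12+a)\leq 1-2a$. For the lower tail, recall that the $d$-flip $\overline C$ again lies in $\CCal^d$, with $\overline C_{1:(d-1)}=C_{1:(d-1)}$ and $r_{\overline C}=1-r_C$; hence
$$
\overline m_{-r_C}\!\left(a-\tfrac12\right)=\mu_{C_{1:(d-1)}}\!\left(\curbr{\xvec:r_C(\xvec)\leq\tfrac12-a}\right)=\overline m_{r_{\overline C}}\!\left(\tfrac12+a\right)\leq 1-2a.
$$
For $a\in(0,\tfrac12]$ the two superlevel sets are disjoint, so by (\ref{2025070901}),
$$
\overline m_{| r_C - \frac{1}{2} |}(a)=\overline m_{r_C}\!\left(\tfrac12+a\right)+\overline m_{-r_C}\!\left(a-\tfrac12\right)\leq(1-2a)+(1-2a)=2-4a,
$$
which together with $\overline m_{| r_C - \frac{1}{2} |}(a)\leq 1$ and the case $a=0$ yields the claim for all $a\in[0,\tfrac12]$.

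The only points requiring genuine care are the application of Tonelli's theorem — which needs joint measurability of $(\xvec,y)\mapsto K_C(\xvec,(y,1])$, standard for Markov kernels of copulas — and the harmless edge cases $m=0$ and $a=0$. If instead one follows the route sketched before the theorem, namely reducing to $d=2$ through the isomorphism $\Psi_A$ of Lemma \ref{Lem2025091501} and then invoking the classical Hardy--Littlewood--P\'olya theorem \cite[Ch.~1, Thm~D.2]{Marshall2011} over $(\Ibb,\lambda)$, the delicate step is verifying that $r_{\mu_C^{\Psi_A}}\circ\iota_A=r_C$ holds $\mu_A$-a.e.; this follows because $\iota_A$ is a $\mu_A$-a.e.\ bijection, so $\sigma(\iota_A(\Xvec))=\sigma(\Xvec)$ up to $\mathbb{P}$-null sets, and $\overline m$ is then invariant under $\Psi_A$ thanks to $\mu_A^{\iota_A}=\lambda$.
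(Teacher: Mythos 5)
Your proof is correct, and it takes a genuinely different route from the paper's. The paper proves the one-sided bound $\overline m_{r_C}(\tfrac12+a)\le 1-2a$ for $d=2$ by showing $r_C(X)$ is dominated by $Y$ in convex order (Jensen for conditional expectations), invoking the Hardy--Littlewood--P\'olya theorem to get $\int_{[0,t]}r_{C,\downarrow}\,\de\lambda\le t-\tfrac{t^2}{2}$, and evaluating at $t=\overline m_{r_C}(a+\tfrac12)$; the general case $d\ge 3$ is then obtained by transporting everything to the square via the measure isomorphism $\Psi_A$ of Lemma \ref{Lem2025091501}. You instead prove exactly the instance of that majorization bound you need, directly and in arbitrary dimension: integrating $r_C$ over the superlevel set $E$ of mass $m$, the trivial lower bound $(\tfrac12+a)m$ meets the upper bound $\int_0^1\min\{m,1-y\}\,\de y=m-\tfrac{m^2}{2}$ coming from Tonelli, $K_C\le 1$, and the disintegration identity (\ref{eq:disint}); the lower tail is handled by the $d$-flip, just as in the paper. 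This buys you a shorter, self-contained argument that needs neither the rearrangement machinery (and hence neither Lemma \ref{Lem20250922} nor the external citation of Hardy--Littlewood--P\'olya) nor the reduction to $d=2$, at the cost of losing the explicit connection to convex order that the paper's formulation makes visible. The only points deserving a word in a final write-up are the ones you already flag: joint measurability of $(\xvec,y)\mapsto K_C(\xvec,(y,1])$ for Tonelli (monotone and right-continuous in $y$, measurable in $\xvec$, hence jointly measurable), the trivial cases $m=0$ and $a=0$, and the fact that identity (\ref{2025070901}) relies on the disjointness of the two superlevel sets, which holds for $a>0$ but not necessarily for $a=0$ --- a subtlety you handle and the paper glosses over.
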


\begin{proof}
We proceed in two steps, first derive the result for $d=2$ and then extend to general dimension $d\geq 3$.  
(i) First, consider $C \in \CCal^2$ and set $q(y) := y$ for every $y \in \Ibb$. 
According to Jensen's inequality for conditional expectation, for any convex $\varphi : \R \rightarrow \R$
and $(X,Y) \sim C$, it holds that $\varphi(r_C(X)) = \varphi(\CExp{Y}{X}) \leq \CExp{\varphi(Y)}{X}$, implying $\Exp{\varphi(r_C(X))} \leq \Exp{\varphi(q(Y))}$, i.e., $r_C(X)$ is dominated by $q(Y)=Y$ in convex order.
Consequently, referring to the Hardy-Littlewood-Polya theorem (see \cite[Ch. 1, Theorem D.2]{Marshall2011}), we
conclude that
\begin{align*}
\intl_{ [0, t] } r_{C, \downarrow}(u) \de\lambda(u) \leq \intl_{ [0, t] } q_\downarrow(u) \de\lambda(u) 
\end{align*}
for every $t \in \Ibb$. Moreover, considering $m_q(v) = 1-v$ and $q_\downarrow(u) = 1-u$ yields
\begin{align} \label{2025082101}
\intl_{ [0, t] } r_{C, \downarrow}(u) \de \lambda(u) \leq t - \tfrac{t^2}{2} 
\end{align}
for every $t \in \Ibb$.
At the same time, recalling that $r_{C, \downarrow} : \Ibb \rightarrow \Ibb$ is decreasing, obviously 
$r_{C, \downarrow}(u) \geq \frac{1}{2}+a$ for every 
$u \in [0, \overline m_{ r_C }(a+\frac{1}{2} ))$. Considering $t:= \overline m_{ r_C }(a+\frac{1}{2} )$
in eq. (\ref{2025082101}) we thus arrive at
\begin{align*}
\rb{\tfrac{1}{2}+a} \overline m_{ r_C }\rb{ a+\tfrac{1}{2} } \leq \overline m_{ r_C }\rb{ a+\tfrac{1}{2} } - \tfrac{1}{2} \rrb{ \overline m_{ r_C }\rb{ a+\tfrac{1}{2} } }^2,
\end{align*}
which implies that $\overline m_{ r_C }( a+\frac{1}{2} ) \leq 1-2a$ holds for every 
$a \in [0, \frac{1}{2}]$. Regarding $\overline m_{ -r_C }( a-\frac{1}{2} )$, we can write
\begin{align*}
\overline m_{ -r_C }\rb{ a-\tfrac{1}{2} } = \overline m_{ 1-r_C }\rb{ a+\tfrac{1}{2} }.
\end{align*}
In addition, $\Exp{\varphi(1-r_C(X))} \leq \Exp{\varphi(1-Y)}$ for every convex $\varphi$. 
Therefore, proceeding analogously to the first part, it follows that 
$\overline m_{ -r_C }( a-\frac{1}{2} ) \leq 1-2a$. Finally, considering eq. (\ref{2025070901}) 
it altogether follows that $$\overline m_{ | r_C - \frac{1}{2} | }(a) \leq \min\curbr{1, 2-4a},$$ which 
completes the proof for $d=2$. \\
(ii) Suppose now that $d \geq 3$ and that $C \in \kc_A^d$ is arbitrary but fixed. 
Interpreting $\Psi_A$ as mapping from $\kc_A^d$ to $\kc^2$, Lemma \ref{Lem2025091501} implies that 
for $\mu_A$-almost every $\mathbf{x} \in \Lambda_{d-1}$
$$
K_C(\mathbf{x},\cdot) = K_{\Psi_A(C)}(\iota_A(\mathbf{x}),\cdot)
$$
holds. This, however, yields that for every $E \in \mathcal{B}(\Ibb)$ we have 
\begin{equation}
\mu_A\left(\{\mathbf{x} \in \Ibb^{d-1}: r_C(\mathbf{x}) \in E \}\right)
= \lambda\left(\{x \in \Ibb: r_{\Psi_A(C)}(x) \in E \}\right).
\end{equation}
As a direct consequence, the two functions $\overline m_{ |r_C - \frac{1}{2}| }$ and 
$\overline m_{ |r_{\Psi_A(C)} - \frac{1}{2}| }$ coincide, so the desired inequality follows from case (i). 
\end{proof}

Completely dependent copulas turned out to maximize the $L^p$-norm, considering Example \ref{ex:cd.not},  
however, they apparently do not attain the upper bound according to Theorem \ref{Theo20250820}.
Nevertheless, as the following example shows, the upper bound can be attained.   
\begin{ex} \label{Ex20250721}
Let $A \in \kc^{d-1}$ be arbitrary but fixed. 
For $b \in (0, \frac{1}{2})$, let $O_2 \in \CCal^2$ denote the ordinal sum (see 
\cite[$\S$3.8]{DS2016}) of the
independence copula $\Pi$ with respect to the segments $(0, b)$, $(b, 1-b)$ and $(1-b, 1)$ (see Figure \ref{ordinal_sum_plot}). 
Setting 
$$
K(\xvec, [0,y]) := K_{O_2}(x_1,[0,y])
$$
for all $(\xvec, y) \in \Ibb^d$ obviously defines a Markov kernel of a copula $O_A \in \CCal_A^d$, given by
$$
O_A(\xvec,y)= \int_{[\mathbf{0}, \xvec]} K(\mathbf{t}, [0,y]) \de \mu_A(\mathbf{t})
$$
for every $(\xvec,y) \in \Ibb^d$. By construction, the regression function $r_C$ of 
$C$ is given by 
$$
r_C(\xvec) = \tfrac{b}{2} \ONE_{(0, b)}(x_1) + \tfrac{1}{2} \ONE_{(b, 1-b)}(x_1) + 
(1-\tfrac{b}{2}) \ONE_{(1-b, 1)}(x_1).
$$
So, in particular we have 
$$
|r_C(\xvec) - \tfrac{1}{2}| = \tfrac{1-b}{2}\ONE_{\Ibb \setminus (b,1-b)}(x_1)
$$
for all $\xvec\in \Ibb^{d-1}$, which directly yields
\begin{align*}
    \overline m_{ |r_C(x) - \frac{1}{2}| }(a) = \ONE_{\curbr{a=0}} + 2b \ONE_{ (0, \frac{1-b}{2}] }(a) 
\end{align*}
for $ a \in \cb{0, \frac{1}{2}}$.
Considering $a \in (\frac{1}{4}, \frac{1}{2}]$ and setting $b = 1-2a$, the upper bound from Theorem 
\ref{Theo20250820} is attained. 
To show exactness for $a \in [0, \frac{1}{4}]$, instead of $O_2$ consider a checkerboard copula 
$C_2^\square$ according to Example \ref{Examp20250519}. 
This copula fulfills $|r_{C_2^\square}(\xvec) - \frac{1}{2}| = \frac{1}{4} \geq a$ for all $\xvec \in \Ibb^{d-1}$. 
\end{ex}

\begin{figure}
\centering
\resizebox*{0.5\textwidth}{!}{\includegraphics{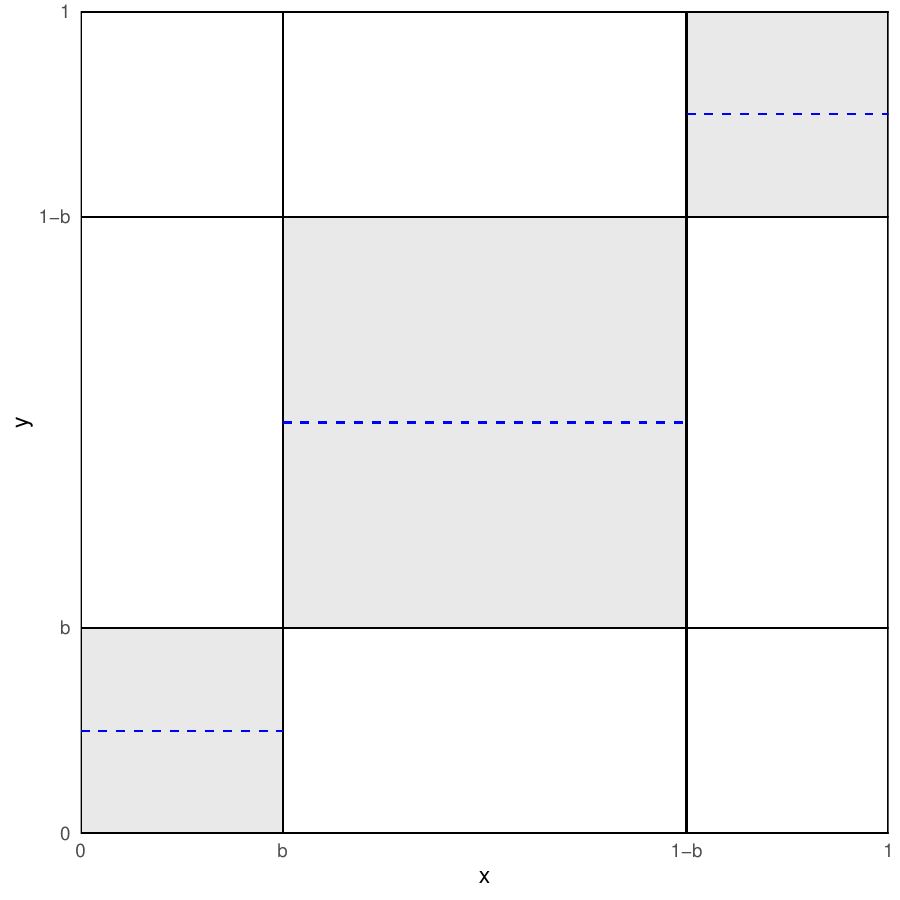}}
\caption{Support of the ordinal sum $O_2$ considered in Example \ref{Ex20250721} (gray) and
regression function $r_{O_2}$ (dashed blue line).}
\label{ordinal_sum_plot}
\end{figure}

Combining Theorem \ref{Theo20250820} and Example \ref{Ex20250721} yields the following corollary:
\begin{cor} \label{Cor20250910}
For every $d \geq 2$, every $A \in \kc^{d-1}$ and every $a \in \cb{0, \frac{1}{2}}$ we have
\begin{align*}
\maxl_{ C \in \CCal_A^d } \overline m_{ | r_C - \frac{1}{2} | }(a) = \min\curbr{1, 2-4a}.
\end{align*}
\end{cor}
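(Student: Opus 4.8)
The plan is to assemble the claimed identity from the two ingredients already in place: the upper bound of Theorem \ref{Theo20250820} and the sharpness witnesses of Example \ref{Ex20250721}. The inequality $\maxl_{C \in \CCal_A^d} \overline m_{|r_C - \frac{1}{2}|}(a) \leq \min\curbr{1, 2-4a}$ is immediate, since Theorem \ref{Theo20250820} establishes $\overline m_{|r_C - \frac{1}{2}|}(a) \leq \min\curbr{1, 2-4a}$ for \emph{every} $C \in \CCal_A^d$ and every $a \in \cb{0, \frac{1}{2}}$, so the supremum over $C$ obeys the same bound.

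For the reverse inequality --- which will at the same time show that the supremum is attained, hence is a maximum --- I would split according to which term of $\min\curbr{1, 2-4a}$ is active. If $a \in (\frac{1}{4}, \frac{1}{2}]$, then $\min\curbr{1, 2-4a} = 2-4a$; here I take the copula $O_A \in \CCal_A^d$ constructed in Example \ref{Ex20250721} from the ordinal sum $O_2$ with the choice $b := 1-2a$, which lies in $(0, \frac{1}{2})$ precisely because $a \in (\frac{1}{4}, \frac{1}{2}]$. Example \ref{Ex20250721} gives $\overline m_{|r_{O_A} - \frac{1}{2}|}(t) = \ONE_{\curbr{t=0}} + 2b\,\ONE_{(0, \frac{1-b}{2}]}(t)$ for $t \in \cb{0, \frac{1}{2}}$, and the chosen $b$ satisfies $\frac{1-b}{2} = a$; since $a > 0$, evaluating at $t = a$ yields $\overline m_{|r_{O_A} - \frac{1}{2}|}(a) = 2b = 2-4a$. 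If instead $a \in \cb{0, \frac{1}{4}}$, then $\min\curbr{1, 2-4a} = 1$, and it suffices to produce a copula whose regression function stays at distance at least $a$ from $\frac{1}{2}$ on a set of full $\mu_A$-mass: the checkerboard copula $C_2^\square$ of Example \ref{Examp20250519} with resolution $N = 2$, lifted to $\CCal_A^d$ exactly as in the second part of Example \ref{Ex20250721}, satisfies $|r_{C_2^\square}(\xvec) - \frac{1}{2}| = \frac{1}{4} \geq a$ for all $\xvec \in \Ibb^{d-1}$, hence $\overline m_{|r_{C_2^\square} - \frac{1}{2}|}(a) = \mu_A(\Ibb^{d-1}) = 1$.

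Since the boundary value $a = \frac{1}{4}$ (as well as $a = 0$) is covered by the second case, all $a \in \cb{0, \frac{1}{2}}$ are accounted for and the asserted identity follows. There is no genuine obstacle here, as the substantive work is already contained in Theorem \ref{Theo20250820} and Example \ref{Ex20250721}; the only points requiring a little care are checking that the parameter choice $b = 1-2a$ stays in $(0, \frac{1}{2})$ and satisfies $\frac{1-b}{2} = a$, and handling the boundary values of $a$.
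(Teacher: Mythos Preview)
Your proposal is correct and follows exactly the approach the paper indicates (combining Theorem \ref{Theo20250820} for the upper bound with the explicit copulas of Example \ref{Ex20250721} for attainment). One small boundary slip: at $a=\tfrac{1}{2}$ the choice $b=1-2a=0$ does \emph{not} lie in $(0,\tfrac{1}{2})$, so the ordinal-sum construction is unavailable there; however, this endpoint is trivial since $\min\{1,2-4a\}=0$ and Theorem \ref{Theo20250820} forces $\overline m_{|r_C-\frac{1}{2}|}(\tfrac{1}{2})=0$ for every $C\in\CCal_A^d$, so the maximum is attained by any copula.
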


\subsection{Relation to the metric $D_p$}

We extend the $D_p$-metric according to eq. (\ref{2025062701}) to  $\CCal_A^d$ by setting 

\begin{align} \label{2025093001}
    D_{A, p}(C_1, C_2) := \rrb{ \intl_{ [0, 1] } \Phi_{C_1, C_2; A, p}(y) \de \lambda(y) }^\frac{1}{p} \hspace{1cm} (p \in [1, \infty)),
\end{align}
for all $C_1, C_2 \in \CCal_A^d$ where, analogously to (\ref{2025062702}), we define
\begin{align} \label{2025093002}
\Phi_{C_1, C_2;A, p}(y) := \intl_{ \Ibb^{d-1} } \abs{ K_{C_1}(\xvec, [0, y]) - K_{C_2}(\xvec, [0, y]) }^p \de \mu_A(\xvec)
\end{align}
for every $y \in \Ibb$. 
According to \cite{Griessenberger2022} $D_{\Pi, p}(C_1, C_2) = D_p(C_1, C_2)$ 
is a metric on $\CCal_\Pi^d$. The following more general result holds: 

\begin{lem} \label{Lem20250630}
    For each $d \geq 2$, every $p \in [1, \infty)$, and $A \in \CCal^{d-1}$, the mapping $D_{A, p} : \CCal_A^d \times \CCal_A^d \rightarrow [0, 1]$ establishes a metric on $\CCal_A^d$. This metric fulfills
    \begin{align*}
    \norm{ r_{C_1}-r_{C_2} }_{A, p} \leq D_{A, p}(C_1, C_2) \hspace{1cm} (C_1, C_2 \in \CCal_A^d).
    \end{align*}
\end{lem}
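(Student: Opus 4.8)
The plan is to establish the metric properties of $D_{A,p}$ first and then derive the stated inequality as a short consequence of the integral representation of $r_C$. For the metric axioms, I would argue that $D_{A,p}$ is, up to relabeling, nothing but an $L^p$-distance on a space of functions. Concretely, to each $C \in \CCal_A^d$ associate the map $(\xvec, y) \mapsto K_C(\xvec, [0,y])$, viewed as an element of $L^p(\Ibb^{d-1} \times \Ibb, \mu_A \otimes \lambda)$; then by Fubini's theorem
\begin{align*}
D_{A,p}(C_1, C_2)^p = \intl_{\Ibb} \intl_{\Ibb^{d-1}} \abs{ K_{C_1}(\xvec, [0,y]) - K_{C_2}(\xvec, [0,y]) }^p \de\mu_A(\xvec) \de\lambda(y),
\end{align*}
so non-negativity, symmetry and the triangle inequality (via Minkowski's inequality in $L^p$) are immediate, and the bound $D_{A,p} \le 1$ follows since the integrand is bounded by $1$. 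The only point requiring a genuine copula argument is the identity of indiscernibles: if $D_{A,p}(C_1, C_2) = 0$, then for $\lambda$-a.e.\ $y$ we have $K_{C_1}(\xvec, [0,y]) = K_{C_2}(\xvec, [0,y])$ for $\mu_A$-a.e.\ $\xvec$; integrating this against $\mu_A$ over sets of the form $[\mathbf{0}, \xvec]$ and using the disintegration identity (\ref{eq:disint}) gives $C_1(\xvec, y) = C_2(\xvec, y)$ on a dense set of $(\xvec, y)$, hence everywhere by continuity, so $C_1 = C_2$. (One should check that the exceptional $\lambda$-null set of $y$'s can be disregarded by right-continuity of $y \mapsto C_i(\xvec, y)$.)

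For the inequality, I would start from representation (\ref{2025051303}), which gives, for each $\xvec$,
\begin{align*}
r_{C_1}(\xvec) - r_{C_2}(\xvec) = \intl_{\Ibb} \rb{ K_{C_1}(\xvec, (y,1]) - K_{C_2}(\xvec, (y,1]) } \de\lambda(y) = -\intl_{\Ibb} \rb{ K_{C_1}(\xvec, [0,y]) - K_{C_2}(\xvec, [0,y]) } \de\lambda(y),
\end{align*}
using $K_{C_i}(\xvec, (y,1]) = 1 - K_{C_i}(\xvec, [0,y])$. Taking absolute values and applying Jensen's inequality (equivalently, the $L^p$-contraction property of averaging over the probability space $(\Ibb, \lambda)$) yields
\begin{align*}
\abs{ r_{C_1}(\xvec) - r_{C_2}(\xvec) }^p \leq \intl_{\Ibb} \abs{ K_{C_1}(\xvec, [0,y]) - K_{C_2}(\xvec, [0,y]) }^p \de\lambda(y).
\end{align*}
Integrating both sides with respect to $\mu_A$ over $\Ibb^{d-1}$ and invoking Fubini's theorem, the right-hand side becomes exactly $\int_{\Ibb} \Phi_{C_1, C_2; A, p}(y) \de\lambda(y) = D_{A,p}(C_1, C_2)^p$, while the left-hand side is $\norm{ r_{C_1} - r_{C_2} }_{A,p}^p$; taking $p$-th roots finishes the proof.

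I expect the inequality itself to be essentially routine once the representation (\ref{2025051303}) is in hand. The main obstacle is the identity-of-indiscernibles part of the metric claim: one must pass from an a.e.\ statement about conditional distribution functions (with the two null sets — in $y$ and in $\xvec$ — potentially interacting) back to pointwise equality of the copulas, which requires careful use of disintegration together with right-continuity/monotonicity of $y \mapsto K_{C_i}(\xvec, [0,y])$ and the continuity of copulas. Everything else — symmetry, the triangle inequality, the bound by $1$, and measurability of $\Phi_{C_1,C_2;A,p}$ so that the outer integral makes sense — reduces to standard $L^p$-space facts and Fubini's theorem.
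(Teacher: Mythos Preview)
Your proposal is correct and follows essentially the same route as the paper: the paper dismisses the metric axioms as ``an easy exercise'' and then proves the inequality exactly as you do, starting from representation (\ref{2025051303}), applying Jensen's inequality to pull the $p$-th power inside the $\lambda$-integral, and integrating against $\mu_A$ with Fubini. Your treatment of the identity of indiscernibles (via disintegration and continuity of copulas) is more explicit than what the paper provides, but is the natural way to fill in that exercise.
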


\begin{proof}
It is an easy exercise to verify the metric properties. For the proof of the asserted inequality,  from (\ref{2025051303}) we directly get
\begin{align*}
\norm{ r_{C_1} - r_{C_2}}_{A, p}^p = \intl_{ \Ibb^{d-1} } \abs{ \intl_{ \Ibb } \rb{ K_{C_1}(\xvec, [0, y]) - K_{C_2}(\xvec, [0, y]) } \de \lambda(y) }^p \de\mu_A(\xvec).
\end{align*}
Thus, an application of Jensen's inequality yields
\begin{align*}
\norm{ r_{C_1} - r_{C_2}}_{A, p}^p \leq \intl_{ \Ibb^{d-1} } \intl_{ \Ibb } \abs{ K_{C_1}(\xvec, [0, y]) - K_{C_2}(\xvec, [0, y]) }^p \de\lambda(y) \de\mu_A(\xvec),
\end{align*}
and the proof is complete.
\end{proof}

An immediate consequence of Lemma \ref{Lem20250630} is the following result stating
that for $L_p$-convergence of regression functions of elements in $\kc^d_A$ 
weak convergence of $\mu_A$-almost all conditional distributions is not necessary - 
convergence w.r.t. $D_{A_,p}$ suffices. 

\begin{cor}
$D_{A, p}$-convergence of copulas in $\CCal_A^d$ implies $L^p$-convergence of the associated regression functions.
\end{cor}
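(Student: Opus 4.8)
The plan is to deduce the corollary directly from the inequality established in Lemma~\ref{Lem20250630}, since $D_{A,p}$-convergence is essentially tailor-made to dominate the $L^p$-distance of regression functions. Concretely, suppose $C_n, C \in \CCal_A^d$ with $D_{A,p}(C_n, C) \to 0$ as $n \to \infty$. The goal is to show $\norm{r_{C_n} - r_C}_{A,p} \to 0$.

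The argument is a one-line application of the bound in Lemma~\ref{Lem20250630}: for every $n$ we have
\begin{align*}
\norm{ r_{C_n} - r_C }_{A, p} \leq D_{A, p}(C_n, C).
\end{align*}
Since the right-hand side tends to $0$ by assumption, so does the left-hand side, which is precisely $L^p$-convergence of $r_{C_n}$ to $r_C$ with respect to the measure $\mu_A$. One may add a remark that this shows $D_{A,p}$ is, up to the constant $1$, stronger than the pseudometric $(C_1,C_2)\mapsto\norm{r_{C_1}-r_{C_2}}_{A,p}$ on $\CCal_A^d$.

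There is essentially no obstacle here — the corollary is a restatement of the inequality part of Lemma~\ref{Lem20250630} in convergence language, and all the real work (the Jensen's-inequality estimate relating $\norm{r_{C_1}-r_{C_2}}_{A,p}^p$ to the integral defining $D_{A,p}^p$) has already been carried out in the proof of that lemma. The only point worth stating explicitly is that $L^p$-convergence of regression functions here means convergence in the normed space $L^p(\Ibb^{d-1}, \mathcal{B}(\Ibb^{d-1}), \mu_A)$, consistent with the definition of $\norm{\cdot}_{A,p}$; no completeness or uniqueness-of-limit subtleties are needed for the implication itself.
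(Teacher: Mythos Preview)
Your proposal is correct and matches the paper's approach exactly: the corollary is stated there as an immediate consequence of Lemma~\ref{Lem20250630}, and your one-line application of the inequality $\norm{r_{C_n}-r_C}_{A,p}\leq D_{A,p}(C_n,C)$ is precisely what is intended.
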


The function $\Phi_{C_1, C_2} \equiv \Phi_{C_1, C_2; \Pi, 1}$ has already
been studied in \cite{Trutschnig2017} and \cite{Griessenberger2022}
in the bivariate and the multivariate linkage setting, respectively. 
The subsequent lemma summarizes the analogous properties for the function $\Phi_{C_1, C_2;A, p}$. 

\begin{lem} \label{Lem2025063002}
Suppose that $d \geq 2$, that $A \in \CCal^{d-1}$, and that 
$p \in [1, \infty)$. Then for arbitrary $C_1, C_2 \in \CCal_A^d$ the 
function $\Phi_{C_1, C_2; A, p}$ defined according to eq. (\ref{2025093002})
has the following properties: 
\begin{enumerate}
    \item $\Phi_{C_1, C_2; A, p}(0) = \Phi_{C_1, C_2; A, p}(1) = 0$
    \item $\Phi_{C_1, C_2; A, p}$ is continuous on $\Ibb$, for $p=1$ even $2$-Lipschitz.
    \item $\Phi_{C_1, C_2; A, p}(y) \leq 2\min\curbr{y, 1-y}$ for every $y \in \Ibb$ and this bound is sharp.
\end{enumerate}
Moreover the metric $D_{A,p}$ fulfills
\begin{align} \label{Lem2025063002Eq1}
\maxl_{C_1, C_2 \in \CCal_A^d} D_{A, p}(C_1, C_2) = 2^{ -\frac{1}{p} },
\end{align} 
i.e., the diameter of the metric space $(\kc^d_A,D_{A,p})$ is $2^{ -\frac{1}{p} }$.
\end{lem}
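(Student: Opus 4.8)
The plan is to treat the three enumerated properties and the diameter identity \eqref{Lem2025063002Eq1} one after the other; the single recurring ingredient is that, by the disintegration identity \eqref{eq:disint} with $\mathbf{B}=\Ibb^{d-1}$ together with uniformity of the $d$-th marginal, every $C\in\CCal_A^d$ satisfies $\intl_{\Ibb^{d-1}}K_C(\xvec,F)\,\de\mu_A(\xvec)=\lambda(F)$ for all $F\in\mathcal{B}(\Ibb)$. For $y=1$ we have $K_{C_i}(\xvec,[0,1])=1$ for every $\xvec$, so the integrand of $\Phi_{C_1,C_2;A,p}(1)$ vanishes identically; for $y=0$, taking $F=\{0\}$ in the identity above gives $K_{C_i}(\xvec,\{0\})=0$ for $\mu_A$-a.e.\ $\xvec$, so the integrand vanishes $\mu_A$-a.e. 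This proves property 1.

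For property 2 write $g_i(\xvec,y):=K_{C_i}(\xvec,[0,y])$. For $y'<y$ and each $i$, monotonicity of $y\mapsto g_i(\xvec,y)$ together with the identity above yields $\intl_{\Ibb^{d-1}}|g_i(\xvec,y)-g_i(\xvec,y')|\,\de\mu_A(\xvec)=\intl_{\Ibb^{d-1}}K_{C_i}(\xvec,(y',y])\,\de\mu_A(\xvec)=y-y'$. Using the elementary estimates $\bigl||s|^p-|t|^p\bigr|\le p\,|s-t|$ for $s,t\in[-1,1]$ (mean value theorem) and $\bigl||a|-|b|\bigr|\le|a-b|$ (reverse triangle inequality), the integrand of $\bigl|\Phi_{C_1,C_2;A,p}(y)-\Phi_{C_1,C_2;A,p}(y')\bigr|$ is dominated by $p\bigl(|g_1(\xvec,y)-g_1(\xvec,y')|+|g_2(\xvec,y)-g_2(\xvec,y')|\bigr)$, so integrating gives $\bigl|\Phi_{C_1,C_2;A,p}(y)-\Phi_{C_1,C_2;A,p}(y')\bigr|\le 2p\,|y-y'|$; for $p=1$ this is the claimed $2$-Lipschitz bound, and for general $p$ it yields continuity on $\Ibb$.

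For property 3 observe that, since $g_1,g_2\in[0,1]$, we have the two pointwise bounds $|g_1-g_2|^p\le|g_1-g_2|\le g_1+g_2$ and $|g_1-g_2|^p\le|g_1-g_2|\le(1-g_1)+(1-g_2)$; integrating and using $\intl_{\Ibb^{d-1}}g_i(\xvec,y)\,\de\mu_A=y$ and $\intl_{\Ibb^{d-1}}(1-g_i(\xvec,y))\,\de\mu_A=1-y$ yields $\Phi_{C_1,C_2;A,p}(y)\le\min\{2y,2(1-y)\}=2\min\{y,1-y\}$. Integrating this last bound over $y$ then gives $D_{A,p}(C_1,C_2)^p\le\intl_{[0,1]}2\min\{y,1-y\}\,\de\lambda(y)=\tfrac12$, hence $D_{A,p}(C_1,C_2)\le 2^{-1/p}$, which is the ``$\le$'' part of \eqref{Lem2025063002Eq1}.

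Finally, a single pair of copulas settles both sharpness in property 3 and the reverse inequality in \eqref{Lem2025063002Eq1}. Let $\iota_A:\Lambda_{d-1}\to\Lambda_1$ be the $\mu_A$-$\lambda$-preserving bijection from \eqref{def:iota} (extended by $\iota_A\equiv 0$ off $\Lambda_{d-1}$), let $S:\Ibb\to\Ibb$ be the $\lambda$-preserving shift given by $S(u):=u+\tfrac12$ on $[0,\tfrac12)$ and $S(u):=u-\tfrac12$ on $[\tfrac12,1]$, and set $h_1:=\iota_A$, $h_2:=S\circ\iota_A$, both of which are $\mu_A$-$\lambda$-preserving. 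The associated completely dependent copulas $C_1:=C_{h_1}$ and $C_2:=C_{h_2}$ belong to $\CCal_A^d$ (the construction recalled before Lemma \ref{Lem2025051301}), and $K_{C_i}(\xvec,[0,y])=\ONE_{[0,y]}(h_i(\xvec))$. Changing variables via $u=\iota_A(\xvec)$, under which $\mu_A$ pushes forward to $\lambda$, turns $\Phi_{C_1,C_2;A,p}(y)$ into $\lambda\bigl([0,y]\,\triangle\, S^{-1}([0,y])\bigr)$, and a short case distinction ($y\le\tfrac12$ versus $y\ge\tfrac12$) shows this symmetric difference has measure exactly $2\min\{y,1-y\}$ for every $y\in\Ibb$. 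Thus $\Phi_{C_1,C_2;A,p}$ attains the bound of property 3 at every point, and $D_{A,p}(C_1,C_2)=\bigl(\intl_{[0,1]}2\min\{y,1-y\}\,\de\lambda(y)\bigr)^{1/p}=2^{-1/p}$, completing the proof. The main obstacle is precisely this last step in the general setting: one must verify that the completely dependent copulas built from $\iota_A$ genuinely lie in $\CCal_A^d$ and that pushing forward by $\iota_A$ faithfully reduces every measure computation to the elementary one on $(\Ibb,\mathcal{B}(\Ibb),\lambda)$.
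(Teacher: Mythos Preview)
Your proof is correct, and for properties 2 and 3 it takes a genuinely different (and in some respects cleaner) route than the paper. The paper's strategy is to first reduce the general $\CCal_A^d$ setting to the bivariate case by pushing forward via the measure isomorphism $\iota_A$ (Lemma \ref{Lem2025091501}), obtaining $\Phi_{C_1,C_2;A,p}=\Phi_{\widetilde C_1,\widetilde C_2;\lambda,p}$ for suitable $\widetilde C_i\in\CCal^2$, and then citing \cite[Lemma 5]{Trutschnig2017} for the bivariate Lipschitz and pointwise bounds; continuity for $p>1$ is argued separately via dominated convergence. You instead work directly with the elementary estimates $\bigl||s|^p-|t|^p\bigr|\le p|s-t|$ on $[-1,1]$ and $|g_1-g_2|^p\le\min\{g_1+g_2,(1-g_1)+(1-g_2)\}$, combined with the marginal identity $\intl_{\Ibb^{d-1}}K_C(\xvec,F)\,\de\mu_A=\lambda(F)$. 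This yields a self-contained argument that avoids both the reduction step and the external reference, and it actually gives a slightly sharper conclusion: your $\Phi_{C_1,C_2;A,p}$ is $2p$-Lipschitz for every $p\ge 1$, whereas the paper only records continuity for $p>1$.

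For sharpness and the diameter identity the two proofs are variants of the same idea. The paper attains the bound with a completely dependent copula $C_h\in\CCal_A^d$ and its $d$-flip $\overline{C_h}=C_{1-h}$; you use $C_{\iota_A}$ and $C_{S\circ\iota_A}$ with the half-shift $S$. Both pairs give $\Phi(y)=2\min\{y,1-y\}$ pointwise after the change of variables $u=\iota_A(\xvec)$. Your closing remark about the ``main obstacle'' is overly cautious: membership of $C_{h_i}$ in $\CCal_A^d$ is immediate from the construction of completely dependent copulas recalled before Lemma \ref{Lem2025051301} (the $(d{-}1)$-marginal is $\mu_A$ by definition), and the pushforward computation via $\iota_A$ is exactly the standard change-of-variables argument the paper itself uses in its proof.
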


\begin{proof}
The boundary behavior of $\Phi_{C_1, C_2; A, p}$ is obvious. 
To prove the remaining assertions we proceed as follows. 
Recall that, according to Lemma \ref{Lem2025091501}, for every 
$C \in \CCal_A^d$ the measure $\widetilde{C}:= \mu_C^{\Psi_A}$ is doubly stochastic. 
Again using the isomorphism $\iota_A$ and the mapping $\Psi_A$ according to eq. (\ref{def:psiA}), for each $C \in \CCal_A^d$ and 
$\mu_A$-almost every $\xvec \in \Ibb^{d-1}$ we have $K_C(\xvec, [0, y]) = K_{\widetilde C}(\iota_A(\xvec), [0, y])$. For $C_1, C_2 \in \CCal_A^d$ using change of coordinates 
we therefore get 
    \begin{align*}
        \Phi_{C_1, C_2;A, p}(y) &= \intl_{ \Ibb^{d-1} } \abs{ K_{C_1}(\xvec, [0, y]) - K_{C_2}(\xvec, [0, y]) }^p \de \mu_A(\xvec) \\
        &= \intl_{ \Ibb^{d-1} } \abs{ K_{ \widetilde C_1 }(\iota_A(\xvec), [0, y]) - K_{ \widetilde C_2 }(\iota_A(\xvec), [0, y]) }^p \de \mu_A(\xvec) \\
         &= \intl_{ \Ibb^{d-1} } \abs{ K_{\widetilde C_1}(u, [0, y]) - 
                                 K_{\widetilde C_2}(u, [0, y]) }^p \de \lambda(u) \\
        &= \Phi_{\widetilde C_1, \widetilde C_2;\lambda, p}(y).
    \end{align*}
Lipschitz-continuity of $y \mapsto \Phi_{C_1, C_2 ;A, 1}(y)$ is now a direct consequence 
of the bivariate seeting analyzed in \cite[Lemma 5]{Trutschnig2017}. 
For general $p>1$, as a direct consequence of Dominated Convergence, 
$\Phi_{C_1, C_2 ;A, p}$ is right-continuous. Moreover, using the fact that we have 
$\mu_{C_1}(\Ibb^{d-1} \times \{s\})=0=\mu_{C_2}(\Ibb^{d-1} \times \{s\})$ 
for every $s \in \Ibb$, it follows that the function $y \mapsto \Phi_{C_1, C_2 ;A, 1}$ 
is also left-continuous, which completes the proof of the second assertion. \\
Considering
$\Phi_{\widetilde C_1, \widetilde C_2;\lambda, p} \leq \Phi_{\widetilde C_1, \widetilde C_2;\lambda, 1}$ and again using \cite[Lemma 5]{Trutschnig2017}  
we have 
$\Phi_{C_1, C_2; A, p}(y) \leq 2y \ONE_{[0, \frac{1}{2}]}(y) + 2(1-y) \ONE_{(\frac{1}{2}, 1]}(y)$,
which yields the third assertion. 
Finally, by the very definition of $D_{A,p}$
$$
\rrb{ D_{A, p}(C_1, C_2) }^p \leq 2\int_{ [0, \frac{1}{2} ]} y \de \lambda(y) + 2\int_{ [\frac{1}{2}, 1] } (1-y) \de \lambda(y) = \tfrac{1}{2}
$$ 
follows and it only remains to show the existence of copulas $C_1,C_2 \in \kc^d_A$ fulfilling
$(D_{A, p}(C_1, C_2))^p=\frac{1}{2}$, which can be done as follows: 
Consider a completely dependent copula $C_h \in \CCal_A^d$ and its flipped counterpart 
$\overline C_h=C_{1-h} \in \CCal_A^d$. Then, using change of coordinates, we get 
\begin{align*}
\Phi_{C_h, C_{1-h}; A, p}(y) &= \intl_{ \Ibb^{d-1} } \abs{ \ONE_{ [0, y] }(h(\xvec)) - \ONE_{ [1-y, 1] }(h(\xvec)) }^p \de\mu_A(\xvec) \\
&= \intl_{ \Ibb } \abs{ \ONE_{ [0, y] }(u) - \ONE_{ [1-y, 1] }(u) }^p \de\lambda(u) \\
&= \intl_{ \Ibb } \abs{ \ONE_{ [0, y] }(u) - \ONE_{ [1-y, 1] }(u) }^p \de\lambda(u) = 
2\min\curbr{y, 1-y},
\end{align*}
which completes the proof. 
\end{proof}

Combining Lemma \ref{Lem20250630} and Lemma \ref{Lem2025063002} it follows that 
for all $d \geq 2$, $p \in [1, \infty)$ and $C_1, C_2 \in \CCal_A^d$
\begin{align*}
\norm{ r_{C_1} - r_{C_2} }_{A, p} \leq 2^{ -\frac{1}{p} }
\end{align*}
holds. 
This bound coincides with the bound from Corollary \ref{Cor2025070701} for $p=1$, while being 
too rough for $p>1$. 
\section{Quantile Regression}
After having derived various results on the regression function of copulas we now
turn towards quantile regression and derive various analogous results. Doing so, we start with 
some first observations concerning the extreme cases of independence and complete dependence
(in the linkage class).
\begin{ex}[Independence and complete dependence] \label{ExQuant20251003}
Let $d \geq 2$ as well as $\tau\in(0, 1]$ be arbitrary but fixed. 
Then obviously for the product copula
$\Pi$ we have $Q_\Pi^\tau(\xvec) = \tau$ for all $\xvec \in \Ibb^{d-1}$. \\
Considering the other extreme, let $h : \Ibb^{d-1} \rightarrow \Ibb$ denote a 
$\lambda_{d-1}$-$\lambda$-preserving transformation and $C_h$ the corresponding
induced completely dependent copula. In this case, $Q_{ C_h }^\tau(\xvec) = h(\xvec)$, for every $\xvec \in \Ibb^{d-1}$, implying that
$$
\intl_{\Ibb^{d-1}} Q_{C_h}^\tau(\mathbf{x})\de\mu_A(\mathbf{x}) = \intl_{\Ibb^{d-1}} h(\mathbf{x}) \de 
\lambda_{d-1}(\mathbf{x}) =  
\intl_{\Ibb} z \de \lambda_{d-1}^h(z)  = \intl_{\Ibb} z \de \lambda(z) =\tfrac{1}{2}.
$$
In other words: the quantile function $Q^\tau_{C_h}$ integrates to $\frac{1}{2}$ for every $\tau \in (0,1]$. 
\end{ex}
\noindent In the following we will derive (best-possible) inequalities for the average value 
$$\intl_{\Ibb^{d-1}} Q_C^\tau(\mathbf{x}) \de\mu_A(\mathbf{x})$$ 
of the quantile function $Q_C^\tau$ for $C \in \kc^d_A$. 
Doing so, we will use the following elementary identity (a generalization of which is usually referred to as 
the `layer cake representation', see  \cite[Theorem 1.13]{LiebLoss2001}): 
\begin{equation}\label{eq:fubini}
 \intl_{\Ibb^{d-1}} Q_C^\tau(\mathbf{x}) \de\mu_A(\mathbf{x}) = 
 \intl_{\Ibb} \overline m_{ Q^\tau_C }(q) \de\lambda(q) =
  \intl_{\Ibb} m_{ Q^\tau_C }(q) \de\lambda(q).
\end{equation}
\begin{thm}
For each $d\geq 2$, $A \in \CCal^{d-1}$, $C \in \CCal_A^d$ and $\tau \in (0, 1]$ 
the following inequality holds: \begin{align*}
\tfrac{\tau}{2} \leq \intl_{ \Ibb^{d-1} } Q_C^\tau(\xvec) \de\mu_A(\xvec) \leq \tfrac{\tau+1}{2}.
\end{align*}
Thereby, the lower and the upper bound are best possible. 
\end{thm}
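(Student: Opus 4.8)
The plan is to work via the layer-cake identity \eqref{eq:fubini}, which reduces the problem to controlling $\overline m_{Q^\tau_C}(q)=\mu_A(\{\xvec: Q_C^\tau(\xvec) > q\})$, or equivalently (by the remarks following \eqref{eq:def.quantile}) the quantity $\mu_A(\{\xvec: K_C(\xvec,[0,q]) < \tau\})$. First I would fix $q\in\Ibb$ and set $g_q(\xvec):=K_C(\xvec,[0,q])$. Integrating the disintegration identity \eqref{eq:disint} over $\mathbf{B}=\Ibb^{d-1}$ with $F=[0,q]$ gives $\int_{\Ibb^{d-1}} g_q\,\de\mu_A = q$, since the $d$-th marginal of $C$ is uniform. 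Thus $g_q$ is an $\Ibb$-valued function on the probability space $(\Ibb^{d-1},\mathcal B(\Ibb^{d-1}),\mu_A)$ with mean exactly $q$, and we want to bound the mass of the super/sublevel sets $\{g_q<\tau\}$ and $\{g_q\geq\tau\}$.

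For the \emph{upper} bound, note $\overline m_{Q^\tau_C}(q)=\mu_A(\{g_q<\tau\})$. A Markov-type estimate using $0\le g_q\le 1$ and $\Exp{g_q}=q$ gives $\mu_A(\{g_q<\tau\})=\mu_A(\{1-g_q>1-\tau\})\le \frac{\Exp{1-g_q}}{1-\tau}=\frac{1-q}{1-\tau}$ for $\tau<1$; also trivially $\overline m_{Q^\tau_C}(q)\le 1$. Plugging $\min\{1,(1-q)/(1-\tau)\}$ into \eqref{eq:fubini} and integrating over $q\in[0,1]$ yields $\int_0^\tau 1\,\de q + \int_\tau^1 \frac{1-q}{1-\tau}\,\de q = \tau + \frac{1-\tau}{2} = \frac{\tau+1}{2}$; the case $\tau=1$ is handled by a limiting argument or directly since $Q^1_C(\xvec)\le 1$. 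For the \emph{lower} bound I would instead use the first equality in \eqref{eq:fubini} via the strict superlevel function or, more cleanly, estimate $m_{Q^\tau_C}(q)=\mu_A(\{Q_C^\tau(\xvec)>q\})\geq \mu_A(\{\xvec: g_q(\xvec)<\tau\})$ from below: since $\Exp{g_q}=q$, we get $q=\Exp{g_q}\le \tau\cdot\mu_A(\{g_q<\tau\})\cdot\text{(something)}$ — more precisely $q = \Exp{g_q} \le \mu_A(\{g_q\ge\tau\})\cdot 1 + \tau\,\mu_A(\{g_q<\tau\})$, hence $\mu_A(\{g_q<\tau\})\ge \frac{\tau - q}{\tau}\cdot$ rearranged as $\mu_A(\{g_q\ge\tau\})\ge\frac{q-\tau}{1-\tau}$ when $q>\tau$; taking complements gives $m_{Q^\tau_C}(q)=1-\overline m(\cdots)\ge 1-\min\{1,(1-q)/(1-\tau)\}=\max\{0,(q-\tau)/(1-\tau)\}$, which integrated over $q$ gives $\int_\tau^1\frac{q-\tau}{1-\tau}\de q=\frac{1-\tau}{2}$ — not matching. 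I would therefore reconsider and instead derive the lower bound by applying the upper bound to the flipped copula $\overline C$: since $Q^\tau_{\overline C}$ relates to $Q^{1-\tau}_C$ (via $K_{\overline C}(\xvec,[0,y])=1-K_C(\xvec,[0,1-y))$, so $1-Q^\tau_C$ behaves like a quantile of $\overline C$), one gets $\int Q^\tau_C\,\de\mu_A = 1 - \int Q^{\ge\, \text{something}}_{\overline C}\,\de\mu_A$, and the upper bound for $\overline C$ translates into the lower bound $\tau/2$ for $C$; alternatively, observe that $Q_C^\tau(\xvec)\ge 0$ always and $Q_C^\tau(\xvec)\ge q$ exactly when $g_q(\xvec)<\tau$, so directly $\int Q^\tau_C\,\de\mu_A=\int_0^1\mu_A(\{g_q<\tau\})\,\de q\ge\int_0^1\max\{0,\text{lower bound on }\mu_A(\{g_q<\tau\})\}\,\de q$, and the clean Markov bound here is $\mu_A(\{g_q\ge\tau\})\le q/\tau$ (from $\Exp{g_q}=q\ge\tau\mu_A(\{g_q\ge\tau\})$), giving $\mu_A(\{g_q<\tau\})\ge\max\{0,1-q/\tau\}$, whence $\int_0^1\max\{0,1-q/\tau\}\,\de q=\int_0^\tau(1-q/\tau)\,\de q=\tau/2$, as desired.

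For \textbf{sharpness}, I would exhibit extremal copulas. For the \emph{lower} bound $\tau/2$: take an ordinal-sum-type construction where $K_C(\xvec,\cdot)$ is, for each $\xvec$, a point mass or uniform distribution arranged so that $g_q(\xvec)=K_C(\xvec,[0,q])$ equals either $0$ (on a set of mass $q$, for each $q$) — concretely the copula whose kernel $K_C(\xvec,\cdot)$ is uniform on a small interval $[0,\varepsilon]$ for $\xvec$ in a set of $\mu_A$-mass $\varepsilon$ and uniform on $[\varepsilon,1]$ elsewhere, then let $\varepsilon\to 0$; this forces $Q_C^\tau(\xvec)\in\{0,\text{near }0\}$ on a mass-$\tau$ set and near... — more carefully, the copula realizing $\overline m_{Q^\tau_C}(q)=\min\{1,\frac{1-q}{1-\tau}\}$ pointwise, built as an ordinal sum / checkerboard exactly as in Example~\ref{Ex20250721}, attains the upper bound $\frac{\tau+1}{2}$, and its flip attains the lower bound $\frac\tau2$. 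The \textbf{main obstacle} I anticipate is pinning down these Markov-inequality cases with the correct direction of the estimate (the first attempt above gives the wrong constant) and then constructing copulas $C\in\CCal_A^d$ whose conditional distribution functions $y\mapsto K_C(\xvec,[0,y])$ have the precise level-set mass profile $\min\{1,(1-q)/(1-\tau)\}$ for \emph{all} $q$ simultaneously while respecting the fixed covariate copula $A$; I expect the isomorphism $\iota_A$ of Lemma~\ref{Lem2025091501} reduces this to the $d=2$, $A=\Pi$ case, where an explicit piecewise-linear (ordinal-sum) kernel does the job.
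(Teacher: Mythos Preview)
Your argument for the two inequalities is essentially the paper's: both derive the pointwise bounds
\[
\max\{0,1-\tfrac{q}{\tau}\}\le \mu_A(\{\xvec:K_C(\xvec,[0,q])<\tau\})\le \min\{1,\tfrac{1-q}{1-\tau}\}
\]
from the constraint $\int K_C(\xvec,[0,q])\,\de\mu_A=q$ and then integrate via \eqref{eq:fubini}. You phrase it as Markov's inequality applied to $g_q$ and $1-g_q$; the paper writes out the same estimate by splitting $q=\int g_q\,\de\mu_A$ over $[Q_C^\tau]_q$ and its complement. Same proof, and your presentation (once you discard the false starts) is arguably cleaner.

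The gap is in sharpness. Your reference to Example~\ref{Ex20250721} is misplaced: that ordinal sum is tailored to the level sets of $|r_C-\tfrac12|$, not to $Q_C^\tau$, and a direct computation shows it does \emph{not} realize $\overline m_{Q_C^\tau}(q)=\min\{1,(1-q)/(1-\tau)\}$. Nor do you need the isomorphism $\iota_A$. The paper instead takes, for the lower bound, the two-point kernel
\[
K(\xvec,E)=\tau\,\ONE_E(\tau x_1)+(1-\tau)\,\ONE_E(\tau+(1-\tau)x_1),
\]
which depends only on $x_1$ and hence lies in $\CCal_A^d$ for \emph{any} $A$; here $Q_C^\tau(\xvec)=\tau x_1$ and the integral is exactly $\tau/2$. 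For the upper bound the paper uses the analogous kernel with $\tau$ replaced by $\tau-\tfrac1n$, giving $Q_C^\tau(\xvec)=\tau-\tfrac1n+(1-\tau+\tfrac1n)x_1$ and integral $\tfrac12+\tfrac12(\tau-\tfrac1n)\to\tfrac{\tau+1}{2}$. Note in particular that the upper bound is only \emph{approached}, not attained, for $\tau<1$; your claim that some ordinal sum ``attains'' it needs to be weakened accordingly.
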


\begin{proof}
First, using disintegration we obviously have  
\begin{eqnarray*}
q &=& \intl_{\Ibb^{d-1}} K_C(\mathbf{x},[0,q])\de\mu_A(\mathbf{x}) =  \intl_{\Ibb^{d-1}} K_C(\mathbf{x},[0,q))\de\mu_A(\mathbf{x}) \\ 
&=& \intl_{[Q^\tau_C]_q} K_C(\mathbf{x},[0,q))\de\mu_A(\mathbf{x}) 
+ \intl_{\Ibb^{d-1} \setminus [Q^\tau_C]_q} K_C(\mathbf{x},[0,q))\de\mu_A(\mathbf{x}).
\end{eqnarray*} 
For every $\mathbf{x} \in [Q^\tau_C]_q$ by construction of the $\tau$-quantile function 
we have $K_C(\mathbf{x},[0,y]) < \tau$ for every $y < q$, implying that $K_C(\mathbf{x},[0,q)) \leq \tau$. 
The first integral is therefore bounded from above by $\tau \cdot \overline m_{ Q_C^\tau }(q)$. 
Using the obvious upper bound $1-\overline m_{ Q_C^\tau }(q)$ for the second integral altogether 
yields
\begin{eqnarray*}
q \leq \tau \cdot \overline m_{ Q_C^\tau }(q) + 1-\overline m_{ Q_C^\tau }(q),
\end{eqnarray*}
from which it directly follows that
$$
\overline m_{ Q_C^\tau }(q) \leq \min \left\{1,\tfrac{1-q}{1-\tau}\right\}.
$$
Applying eq. (\ref{eq:fubini}) and calculating the integral directly 
yields the upper bound of $\frac{\tau+1}{2}$. \\
To prove the lower point we proceed as follows: 
Using the fact that $Q^\tau_C(\mathbf{x}) < q$ implies $K_C(\mathbf{x},[0,q]) \geq \tau$ it 
follows that 
\begin{eqnarray*}
 q &=& \intl_{\Ibb^{d-1}} K_C(\mathbf{x},[0,q])\de\mu_A(\mathbf{x}) \geq 
 \intl_{\Ibb^{d-1} \setminus [Q^\tau_C]_q} K_C(\mathbf{x},[0,q))\de\mu_A(\mathbf{x}) \\
 &\geq& \tau \mu_A\left(\Ibb^{d-1} \setminus [Q^\tau_C]_q\right) = \tau(1-\overline m_{ Q_C^\tau }(q)).
\end{eqnarray*}
This directly yields 
$$
\overline m_{ Q_C^\tau }(q) \geq \max \left\{0,1-\tfrac{q}{\tau}\right\}.
$$
for every $q \in \Ibb$. Again using eq. (\ref{eq:fubini}) and calculating the integral directly 
yields the lower bound $\frac{\tau}{2}$. \\
Finally, it remains to show that the established bounds
are best-possible. 
For fixed $\tau \in (0,1)$, defining $K: \Ibb^{d-1} \times \mathcal{B}(\Ibb) \rightarrow \Ibb$ 
by 
$$
K(\mathbf{x},E)= \tau \ONE_E(\tau x_1) + \, (1-\tau) \ONE_E(\tau+(1-\tau)x_1),
$$
obviously $K$ is the $(d-1)$-Markov kernel of a unique copula $C \in \kc^d_A$. 
For this very copula $C$ the $\tau$-quantile function $Q^\tau_C$, however, is given by 
$Q^\tau_X(\mathbf{x})=\tau x_1$, which yields 
$$
\intl_{\Ibb^{d-1}} Q^\tau_C(\mathbf{x}) \de\lambda_{d-1}(\mathbf{x}) = \intl_{\Ibb} \tau x_1 \, \de\lambda(x_1)
= \tfrac{\tau}{2},
$$
so the lower bound is attainable.  
For showing that the upper bound is best-possible, consider $n\in\mathbb{N}$ sufficiently large, so that $\tau - \frac{1}{n} \in (0,1)$ holds and set  
$$
K(\mathbf{x},E):= (\tau-\tfrac{1}{n}) \ONE_E((\tau-\tfrac{1}{n}) x_1) + \, (1-\tau+\tfrac{1}{n}) \ONE_E(\tau-\tfrac{1}{n} + (1-\tau + \tfrac{1}{n})x_1),
$$
for every $\mathbf{x} \in \Ibb^{d-1}$ and $E \in \mathcal{B}(\Ibb)$. 
Then, $K$ is the $(d-1)$-Markov kernel of a unique copula $C \in \kc^d_A$, whose  
the $\tau$-quantile function $Q^\tau_C$ obviously is given by 
$Q^\tau_C(\mathbf{x})=\tau-\tfrac{1}{n} + (1-\tau + \tfrac{1}{n})x_1=Q^1_C(\mathbf{x})$. 
A straightforward calculation yields
\begin{eqnarray*}
\intl_{\Ibb^{d-1}} Q^\tau_C(\mathbf{x}) \de\lambda_{d-1}(\mathbf{x}) = \tfrac{1}{2} + \tfrac{1}{2}(\tau-\tfrac{1}{n}),     
\end{eqnarray*}
hence, considering $n \rightarrow \infty$ completes the proof.
\end{proof}

Although the integral of the $\tau$-quantile function $Q^\tau_C$ 
does not need to coincide with the value $\tau$, as the following lemma shows, integrating 
over $\tau$ again yields the same constant for all copulas.

\begin{lem}
For each $d \geq 2$ and $C \in \CCal_A^d$, we have
\begin{align*}
\intl_{ \Ibb } \intl_{ \Ibb^{d-1} } Q_C^\tau(\xvec) \de\mu_A(\xvec) \de\lambda(\tau) = \tfrac{1}{2}.
\end{align*}
\end{lem}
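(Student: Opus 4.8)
The plan is to reduce the double integral to something involving $Y$ itself and then use the fact that the univariate marginal of any copula is uniform. First I would swap the order of integration via Tonelli, writing
\begin{align*}
\intl_{\Ibb} \intl_{\Ibb^{d-1}} Q_C^\tau(\xvec) \de\mu_A(\xvec) \de\lambda(\tau) = \intl_{\Ibb^{d-1}} \rb{ \intl_{\Ibb} Q_C^\tau(\xvec) \de\lambda(\tau) } \de\mu_A(\xvec).
\end{align*}
For fixed $\xvec$, the inner integral $\int_{\Ibb} Q_C^\tau(\xvec) \de\lambda(\tau)$ is the integral of the quasi-inverse $(F_C^\xvec)^-$ of the conditional distribution function $F_C^\xvec(y)=K_C(\xvec,[0,y])$ over $\tau \in (0,1]$. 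It is a standard fact about quantile functions that $\int_0^1 (F^-)(\tau) \de\tau = \int_\R y \de\nu(y)$ whenever $F$ is the distribution function of a probability measure $\nu$ with finite mean; applied to $\nu = K_C(\xvec,\cdot)$ this gives $\int_{\Ibb} Q_C^\tau(\xvec)\de\lambda(\tau) = \int_\Ibb y\, K_C(\xvec,\de y) = r_C(\xvec)$, the mean regression function.

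Having identified the inner integral with $r_C(\xvec)$, I would then invoke the disintegration identity (\ref{eq:disint}) (equivalently, the first lemma of Section 3) to conclude
\begin{align*}
\intl_{\Ibb^{d-1}} r_C(\xvec) \de\mu_A(\xvec) = \intl_{\Ibb^{d-1}} r_C(\xvec) \de\mu_{C_{1:(d-1)}}(\xvec) = \tfrac{1}{2},
\end{align*}
using that $C_{1:(d-1)} = A$ for $C \in \CCal_A^d$. Alternatively, one can bypass the mean regression function entirely: by the layer-cake identity (\ref{eq:fubini}), $\int_{\Ibb^{d-1}} Q_C^\tau(\xvec)\de\mu_A(\xvec) = \int_\Ibb \overline m_{Q_C^\tau}(q)\de\lambda(q)$, so after a further application of Tonelli the double integral becomes $\int_\Ibb \int_\Ibb \mu_A(\{\xvec : Q_C^\tau(\xvec) \geq q\}) \de\lambda(\tau)\de\lambda(q)$; using the equivalence $Q_C^\tau(\xvec) < q \Leftrightarrow K_C(\xvec,[0,q)) \geq \tau$ established in Section 2, the inner $\tau$-integral evaluates to $1 - K_C(\xvec,[0,q))$ pointwise (up to the measure-zero ambiguity at the jumps), and then disintegration over $q$ gives $\int_\Ibb (1-q)\de\lambda(q) = \tfrac12$.

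I do not expect a genuine obstacle here; the result is essentially Fubini/Tonelli plus the already-established uniformity of the last marginal. The only point requiring mild care is the justification of the identity $\int_0^1 (F^-)(\tau)\de\tau = \int y\,\de\nu(y)$ for the quasi-inverse, which should be cited or given in one line (it follows from the fact that $(F^-)(\tau) \leq y \iff \tau \leq F(y)$ together with Tonelli applied to $\Ind{(F^-)(\tau) > q}$), and making sure the interchange of integrals is legitimate, which is immediate since the integrand $Q_C^\tau(\xvec) \in [0,1]$ is nonnegative and jointly measurable (joint measurability in $(\xvec,\tau)$ follows from the measurability of $\xvec \mapsto Q_C^\tau(\xvec)$ noted in Section 2 together with monotonicity in $\tau$).
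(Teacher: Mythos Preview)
Your proposal is correct, and your two alternatives correspond to genuinely different routes. Your primary approach---swap the order of integration and identify $\int_\Ibb Q_C^\tau(\xvec)\de\lambda(\tau)$ with $r_C(\xvec)$ via the standard identity $\int_0^1 F^-(\tau)\de\tau = \int y\,\de\nu(y)$, then invoke the already-established fact $\int r_C\,\de\mu_A=\tfrac12$---is \emph{not} the paper's argument. The paper instead proceeds through your alternative: it applies the layer-cake representation (\ref{eq:fubini}), uses the equivalence $Q_C^\tau(\xvec)>q \Leftrightarrow K_C(\xvec,[0,q])<\tau$, rewrites everything with indicators, applies Fubini, and integrates the resulting $K_C(\xvec,(q,1])$ via disintegration to obtain $\int_\Ibb(1-q)\de\lambda(q)=\tfrac12$. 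Your primary route is shorter and makes explicit the pleasant structural fact that averaging the $\tau$-quantile function over $\tau$ recovers the mean regression function, thereby reducing the lemma to its mean-regression analogue; the paper's route is more self-contained in that it does not appeal to the earlier lemma or to the quantile--mean identity. One small remark on your alternative: the equivalence you cite, $Q_C^\tau(\xvec)<q \Leftrightarrow K_C(\xvec,[0,q))\geq\tau$, is not literally stated in Section~2 and fails at boundary cases; it is cleaner (and matches the paper) to use the exact equivalence $Q_C^\tau(\xvec)>q \Leftrightarrow K_C(\xvec,[0,q])<\tau$ together with $m_{Q_C^\tau}$ rather than $\overline m_{Q_C^\tau}$, which dissolves the ``measure-zero ambiguity'' you flag.
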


\begin{proof}
Considering eq. (\ref{eq:fubini}) and using the fact that $Q^\tau_C(\mathbf{x})>q$ if, and only if
$K_C(\mathbf{x},[0,q])<\tau$, directly yields
\begin{align*}
\intl_{ \Ibb^{d-1} } Q_C^\tau(\xvec) \de\mu_A(\xvec) &= 
\intl_{ \Ibb } \mu_A\left( \curbr{\xvec \in \Ibb^{d-1} : K_C(\xvec, [0, q]) < \tau } \right) \de \lambda(q) \\
&= \intl_{ \Ibb } \intl_{\Ibb^{d-1}} \ONE_{[0,\tau)} (K_C(\xvec, [0, q])) \de \mu_A(\xvec) \de \lambda(q) \\
&=  \intl_{ \Ibb } \intl_{\Ibb^{d-1}} \ONE_{[K_C(\xvec, [0, q]),1]} (\tau) \de \mu_A(\xvec) \de \lambda(q). 
\end{align*}
Having this, applying Fubini's theorem and disintegration, we altogether get
\begin{align*}
\intl_{ \Ibb } \intl_{ \Ibb^{d-1} } Q_C^\tau(\xvec) \de\mu_A(\xvec) \de \lambda(\tau) &= 
\intl_{ \Ibb }  \intl_{ \Ibb } \intl_{ \Ibb^{d-1} } \ONE_{[K_C(\xvec, [0, q]),1]} (\tau) \de \mu_A(\xvec) \de \lambda(q) \de \lambda(\tau) \\
&= \intl_{ \Ibb } \intl_{ \Ibb^{d-1} }  \intl_{ \Ibb }\ONE_{[K_C(\xvec, [0, q]),1]} (\tau) \de \lambda(\tau) \de \mu_A(\xvec) \de \lambda(q) \\
&= \intl_{ \Ibb } \intl_{ \Ibb^{d-1} }  K_C(\mathbf{x},(q,1]) \de \mu_A(\xvec) \de \lambda(q) \\
&= \intl_{ \Ibb } (1-q) \de \lambda(q) = \tfrac{1}{2}
\end{align*}
and the proof is complete. 
\end{proof}
Proceeding analogous to the proof of the previous result, we conclude this section with a sharp upper bound for the average $L^1$-distance of quantile functions for copulas in the the 
family $\kc^d_A$ and a direct consequence to the cube copula.

\begin{thm}\label{thm:average.quantile}
For each $d \geq 2$, $A \in \kc^{d-1}$, and arbitrary $C_1,C_2\in \CCal_A^d$ the following 
inequality holds: 
\begin{align}
    D_{A,1}(C_1,C_2) = \intl_\Ibb \norm{ Q_{C_1}^\tau - Q_{C_2}^\tau}_{A,1} \de \lambda(\tau)  
    \leq \tfrac{1}{2}.
\end{align}
This inequality is best-possible.
\end{thm}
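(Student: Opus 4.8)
The plan is to reduce the statement to facts already at hand: the inequality $D_{A,1}(C_1,C_2)\le\tfrac12$ together with its sharpness is exactly eq.~(\ref{Lem2025063002Eq1}) of Lemma~\ref{Lem2025063002} specialised to $p=1$, which yields $\maxl_{C_1,C_2\in\CCal_A^d}D_{A,1}(C_1,C_2)=2^{-1}=\tfrac12$. Hence the only new content is the claimed identity
\[
\intl_\Ibb \norm{ Q_{C_1}^\tau - Q_{C_2}^\tau }_{A,1}\de\lambda(\tau)=D_{A,1}(C_1,C_2),
\]
and the whole argument amounts to proving it.

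To establish this identity I would proceed as in the proof of the preceding lemma. Fix $\xvec\in\Ibb^{d-1}$; by the layer-cake representation, for $a,b\in\Ibb$ one has $\abs{a-b}=\intl_\Ibb\abs{ \ONE_{\curbr{q<a}}-\ONE_{\curbr{q<b}} }\de\lambda(q)$, so with $a=Q_{C_1}^\tau(\xvec)$, $b=Q_{C_2}^\tau(\xvec)$ and the equivalence $q<Q_{C_i}^\tau(\xvec)\iff K_{C_i}(\xvec,[0,q])<\tau$ from Section~\ref{SecNotPre}, the integrand becomes $\abs{ \ONE_{(K_{C_1}(\xvec,[0,q]),1]}(\tau)-\ONE_{(K_{C_2}(\xvec,[0,q]),1]}(\tau) }$. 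Substituting this into $\intl_\Ibb\intl_{\Ibb^{d-1}}\abs{Q_{C_1}^\tau(\xvec)-Q_{C_2}^\tau(\xvec)}\de\mu_A(\xvec)\de\lambda(\tau)$ and invoking Tonelli's theorem (all integrands are nonnegative), I would reorder the three integrations so as to carry out the $\tau$-integral first; since $\intl_\Ibb\abs{ \ONE_{(a_1,1]}(\tau)-\ONE_{(a_2,1]}(\tau) }\de\lambda(\tau)=\abs{a_1-a_2}$, what remains is
\[
\intl_{\Ibb^{d-1}}\intl_\Ibb\abs{ K_{C_1}(\xvec,[0,q])-K_{C_2}(\xvec,[0,q]) }\de\lambda(q)\de\mu_A(\xvec)=\intl_\Ibb\Phi_{C_1, C_2; A, 1}(q)\de\lambda(q)=D_{A,1}(C_1,C_2),
\]
which is the desired equality. (Conceptually, for fixed $\xvec$ this is just the classical fact that the $L^1$-distance between two distribution functions on $\Ibb$ equals the $L^1$-distance between their quasi-inverses, both being the planar Lebesgue measure of the symmetric difference of the corresponding hypographs.)

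For best-possibility, rather than merely quoting Lemma~\ref{Lem2025063002}, I would exhibit the extremal pair explicitly (this also makes the bound transparent): pick a completely dependent $C_h\in\CCal_A^d$ induced by a $\mu_A$-$\lambda$-preserving $h$ and its $d$-flip $\overline{C_h}=C_{1-h}\in\CCal_A^d$. Since the Markov kernels $K_{C_h}(\xvec,\cdot)$ and $K_{C_{1-h}}(\xvec,\cdot)$ are concentrated at $h(\xvec)$ and $1-h(\xvec)$, respectively, one gets $Q_{C_h}^\tau\equiv h$ and $Q_{C_{1-h}}^\tau\equiv 1-h$ for every $\tau\in(0,1]$ (cf.\ Example~\ref{ExQuant20251003}); using that the push-forward of $\mu_A$ under $h$ is $\lambda$,
\[
\norm{ Q_{C_h}^\tau-Q_{C_{1-h}}^\tau }_{A,1}=\intl_{\Ibb^{d-1}}\abs{ 2h(\xvec)-1 }\de\mu_A(\xvec)=\intl_\Ibb\abs{ 2z-1 }\de\lambda(z)=\tfrac12
\]
for every $\tau$, so the left-hand side of the asserted inequality equals $\tfrac12$. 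I do not expect a genuine obstacle here: the computation is a reshuffling of a triple integral. The only points needing (routine) care are the joint measurability of $(\xvec,\tau)\mapsto Q_{C_i}^\tau(\xvec)$ and of the indicator functions involved -- guaranteed by the measurability remarks in Section~\ref{SecNotPre} -- and checking that $C_h$ and $C_{1-h}$ genuinely belong to $\CCal_A^d$ with the stated quantile functions, which follows at once from the form of their Markov kernels.
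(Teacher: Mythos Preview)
Your proof is correct and follows essentially the same route as the paper: both arguments rewrite $|Q_{C_1}^\tau(\xvec)-Q_{C_2}^\tau(\xvec)|$ via the indicator identity $|a-b|=\intl_\Ibb|\ONE_{[0,a)}-\ONE_{[0,b)}|\de\lambda$, use the equivalence $q<Q_C^\tau(\xvec)\iff K_C(\xvec,[0,q])<\tau$ to swap the roles of $q$ and $\tau$, apply Fubini/Tonelli, and then invoke Lemma~\ref{Lem2025063002} with $p=1$ for the bound and its sharpness. Your explicit verification of the extremal pair $C_h,\,C_{1-h}$ is a welcome addition but is already contained in the proof of that lemma.
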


\begin{proof}
    Our proof builds upon the facts that for arbitrary $a,b \in \Ibb$ we have
    \begin{align}
        \vert a - b\vert = \int_\Ibb \vert \ONE_{[0,a)} - \ONE_{[0,b)} \vert \de \lambda
        = \int_\Ibb \vert \ONE_{(a,1]} - \ONE_{(b,1]} \vert \de \lambda,
    \end{align}
    and that (by the definition of the quantile function) for every 
    $C \in \kc^d$, every $\xvec \in \Ibb^{d-1}$, every $\tau \in (0,1]$ and every 
    $v \in [0,1]$ the following equivalence holds: 
    (i) $Q_C^\tau(\xvec)>v$ if, and only if (ii) $\tau > K_C(\xvec,[0,v])$. \\
    Hence, setting 
    $$V:=\intl_\Ibb \norm{ Q_{C_1}^\tau - Q_{C_2}^\tau}_{A,1} \de \lambda(\tau)$$ 
    and using Fubini's theorem, it follows that
    \begin{align*}
          V &= \int_\Ibb \int_{\Ibb^{d-1}} \int_\Ibb \abs{\ONE_{[0,Q^\tau_{C_1}(\xvec))}(v) 
          - \ONE_{[0,Q^\tau_{C_2}(\xvec))}(v)} \de \lambda(v) \de \mu_A(\xvec) \de \lambda(\tau) \\
          &= \int_\Ibb \int_{\Ibb^{d-1}} \int_\Ibb \abs{\ONE_{(K_{C_1}(\xvec,[0,v]),1]} (\tau)
          - \ONE_{(K_{C_2}(\xvec,[0,v]),1]}(\tau)} \de \lambda(v) \de \mu_A(\xvec) \de \lambda(\tau) \\
          &= \int_\Ibb \int_{\Ibb^{d-1}} \int_\Ibb \abs{\ONE_{(K_{C_1}(\xvec,[0,v]),1]} (\tau)
          - \ONE_{(K_{C_2}(\xvec,[0,v]),1]}(\tau)}  \de \lambda(\tau) \de \mu_A(\xvec) \de \lambda(v) \\
          &= \int_\Ibb \int_{ \Ibb^{d-1} } \abs{ K_{C_1}(\xvec, [0, y]) - K_{C_2}(\xvec, [0, y]) }\de \mu_A(\xvec) \de \lambda(v) \\
          &=D_{A,1}(C_1,C_2)
    \end{align*}
    Having this, using Lemma \ref{Lem2025063002} with $p=1$ completes the proof.
\end{proof}

\begin{ex}[cube copula cont.]
    Again consider the three-dimensional copula $C^{\textrm{cube}} \in \kc^3_\Pi$ 
    from Example \ref{ex:cube}. 
    In this case $K_{C^{\textrm{cube}}}(\xvec,\cdot)$ 
     is either the uniform distribution on $I_{2,1}$ or on $I_{2,2}$ and it is straightforward
     to verify that the $\tau$-quantile function 
     $Q_{ C^{\textrm{cube}} }^\tau$ is given by
    \begin{align*}
        Q_{ C^{\textrm{cube}} }^\tau(\xvec) = \tfrac{\tau}{2} \ONE_{ I_{2, 1}^2 \cup I_{2,2}^2 }(\xvec) + \tfrac{\tau+1}{2} \ONE_{(I_{2, 2} \times I_{2, 1}) \cup (I_{2, 1} \times I_{2, 2})}(\xvec).
    \end{align*}
    Since the partial vine copula $\psi(C^{\textrm{cube}})$ coincides with the independence 
    co\-pula $\Pi$, we have
    \begin{align*}
        \norm{ Q_{ C^{\textrm{cube}} }^\tau - Q_{ \psi(C^{\textrm{cube}}) }^\tau }_1 = \tfrac{1}{2} \abs{ \tfrac{\tau}{2} - \tau } + \tfrac{1}{2} \abs{ \tfrac{\tau+1}{2} - \tau } = \tfrac{1}{4},
    \end{align*}
    for every $\tau \in (0,1]$, so, having in mind Theorem \ref{thm:average.quantile} 
    also from the perspective of quantile regression
    the approximation quality of the partial vine may be very poor.  
\end{ex}

\section{Estimation in the bivariate setting}
We conclude this paper with some results on estimating the mean and the quantile regression functions via the empirical checkerboard estimator in dimension $d=2$. 
Suppose that $C \in \kc^2$ is fixed and that $(U_1,V_1)\ldots,(U_n,V_n)$ is a sample from 
$(U,V)\sim C$. As before let $E_n$ denote the induced empi\-rical copula and 
$\mathfrak{Cb}_{N(n)}(E_n)$ its checkerboard approximation with $N(n)=\lfloor n^s\rfloor$,
for some fixed $s \in (0,\frac{1}{2})$. Then, as shown in \cite{ALFT} (also see 
\cite{FuchsTrutschnig2020,Griessenberger2022,JGT}), the sequence $(\mathfrak{Cb}_{N(n)}(E_n))_{n \in \mathbb{N}}$ converges weakly conditional to $C$ with proba\-bility $1$, without any 
regularity/smoothness restrictions on $C$. 
In other words: With probability $1$, for $\lambda$-almost every
$x \in \Ibb$, the conditional distributions $K_{\mathfrak{Cb}_{N(n)}(E_n)}(x,\cdot)$ 
converge weakly to $K_C(x,\cdot)$ for $n \rightarrow \infty$. 
Although weak conditional convergence has been proved in full generality in the 
afore-mentioned papers, to the best of our knowledge, asymptotics of the 
checkerboard estimator $\mathfrak{Cb}_{N(n)}(E_n)$ in its pure (unaggregated) as 
well as its aggregated form are still unknown. \\
Resturning to consistency in the regression context, the following result is 
a direct consequence of weak condition convergence.  
\begin{thm} \label{thm:consistence.mean}
Suppose that $C \in \kc$ and that $(U_1,V_1)\ldots,(U_n,V_n)$ is a sample from 
$(U,V)\sim C$ with empirical copula $E_n$. Furthermore set $N(n):=\lfloor n^s\rfloor$
for some fixed $s \in (0,\frac{1}{2})$.
Then for $\lambda$-almost every $x \in \Ibb$ we have 
$$
\lim_{n \rightarrow \infty} r_{\mathfrak{Cb}_{N(n)}(E_n)}(x)=r_C(x),
$$
so in particular 
$$
\lim_{n \rightarrow \infty} \Vert r_{\mathfrak{Cb}_{N(n)}(E_n)}-r_C \Vert_p=0
$$
holds for every $p \in [1,\infty)$.
\end{thm}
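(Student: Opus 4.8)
The statement has two parts: pointwise ($\lambda$-a.e.) convergence $r_{\mathfrak{Cb}_{N(n)}(E_n)}(x)\to r_C(x)$, and the resulting $L^p$-convergence. The plan is to deduce the first part from weak conditional convergence, which has already been quoted: with probability $1$, for $\lambda$-a.e.\ $x\in\Ibb$, the measures $K_{\mathfrak{Cb}_{N(n)}(E_n)}(x,\cdot)$ converge weakly to $K_C(x,\cdot)$ as $n\to\infty$. First I would recall that $r_D(x)=\int_\Ibb y\,K_D(x,\de y)$ for any $D\in\kc^2$ (eq.\ (\ref{2025051301})), i.e.\ $r_D(x)$ is the integral of the bounded continuous function $y\mapsto y$ against the probability measure $K_D(x,\cdot)$ on the compact interval $\Ibb$. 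Hence, for every $x$ at which weak convergence holds, the Portmanteau theorem gives
$$
r_{\mathfrak{Cb}_{N(n)}(E_n)}(x)=\intl_\Ibb y\,K_{\mathfrak{Cb}_{N(n)}(E_n)}(x,\de y)\;\longrightarrow\;\intl_\Ibb y\,K_C(x,\de y)=r_C(x),
$$
which is precisely the first assertion, on a set of $x$ of full $\lambda$-measure, with probability $1$.

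For the second part I would invoke dominated convergence on $(\Ibb,\mathcal B(\Ibb),\lambda)$. The integrand $\big|r_{\mathfrak{Cb}_{N(n)}(E_n)}(x)-r_C(x)\big|^p$ converges to $0$ for $\lambda$-a.e.\ $x$ by the first part, and it is uniformly bounded: since both regression functions take values in $\Ibb$ (being integrals of $y\in\Ibb$ against probability measures), the integrand is dominated by the constant $1\in L^1(\lambda)$. Therefore
$$
\Vert r_{\mathfrak{Cb}_{N(n)}(E_n)}-r_C\Vert_p^p=\intl_\Ibb \big|r_{\mathfrak{Cb}_{N(n)}(E_n)}(x)-r_C(x)\big|^p\de\lambda(x)\;\longrightarrow\;0
$$
for every fixed $p\in[1,\infty)$, with probability $1$, which completes the argument.

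There is no real obstacle here: the whole content is packaged in the previously quoted weak conditional convergence result of \cite{ALFT}, and everything after that is the standard ``weak convergence of measures plus a bounded continuous test function, then dominated convergence'' routine. The only mild point worth stating carefully is the logical order of the ``almost every'': the null set of exceptional $x$ may depend on $\omega$, but since we fix an event of probability $1$ on which weak conditional convergence holds for $\lambda$-a.e.\ $x$, both conclusions are valid on that single probability-one event, and the $L^p$ statement follows for every $p$ simultaneously (the dominating function $1$ does not depend on $p$ on the bounded domain $\Ibb$). If one wanted to emphasize the bivariate-specific simplification, one could note that in $d=2$ the covariate law is exactly $\lambda$, so no change of the integrating measure $\mu_A$ is needed and $\Vert\cdot\Vert_p=\Vert\cdot\Vert_{\lambda,p}$.
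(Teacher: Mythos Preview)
Your argument is correct and matches the paper's approach exactly: the paper does not even spell out a proof but simply declares the theorem ``a direct consequence of weak condition[al] convergence,'' and what you wrote is precisely that direct consequence (integrate the bounded continuous test function $y\mapsto y$ to get a.e.\ convergence, then apply dominated convergence for the $L^p$ statement). Your extra remarks on the order of quantifiers and the uniform bound by $1$ are accurate and helpful clarifications.
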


Since weak convergence of a sequence $(F_n)_{n \in \mathbb{N}}$ 
of distribution functions to a distribution function $F$ 
is equi\-valent to pointwise convergence 
of the corresponding quasi-inverses $(F_n^-)_{n \in \mathbb{N}}$ in every continuity point
of $F^-$ (see, e.g., \cite{VDW}), the afore-mentioned property on 
weak conditional convergence implies the following: 
There exists some $\Lambda \in \mathcal{B}(\Ibb)$ with $\lambda(\Lambda)=1$,
such that for every $x \in \Lambda$ and every continuity point $\tau \in \Ibb$ of 
$\tau \mapsto Q_C^\tau(x)$ we have
$$
\lim_{n \rightarrow \infty} Q_{\mathfrak{Cb}_{N(n)}(E_n)}^\tau(x) = Q_C^\tau(x).
$$
For proving our second main result of this section - consistency of the empirical 
checkerboard estimator for quantile function - 
we will use the following technical lemma, in which the set
$S_q$ for $q \in (0,1)$ is defined by
\begin{equation}\label{eq:plateu}
S_q := \{x \in \Ibb: \, q \textrm{ is a discontinuity point of } \tau \mapsto Q_C^\tau(x) \} \in \mathcal{B}(\Ibb).
\end{equation}
\begin{lem} \label{Lem20250923}
There are at most countably many $q \in (0,1)$ with $\lambda(S_q)>0$. 
\end{lem}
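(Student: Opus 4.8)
The plan is to show that the set $D := \{q \in (0,1): \lambda(S_q)>0\}$ is countable by exhibiting, for each $q \in D$, an associated "plateau" in the conditional distribution functions that carries positive mass, and then using a counting/summability argument on the joint mass these plateaus consume. Recall from the preliminaries that $x \mapsto Q_C^\tau(x)$ is the quasi-inverse of the increasing function $\tau \mapsto F_C^x(\tau)$ is not quite right — rather, $\tau \mapsto Q_C^\tau(x)$ is the quasi-inverse (generalized inverse) of $y \mapsto F_C^x(y) = K_C(x,[0,y])$. A value $q$ is a discontinuity point of $\tau \mapsto Q_C^\tau(x)$ precisely when the conditional distribution function $y \mapsto F_C^x(y)$ has a \emph{flat stretch at level $q$}, i.e. there exist $y_1 < y_2$ with $F_C^x(y) = q$ for all $y \in [y_1, y_2)$; equivalently $K_C(x, (y_1,y_2]) = 0$ while $K_C(x,[0,y_1]) = q$. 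So $x \in S_q$ means the conditional law $K_C(x,\cdot)$ has a "gap" (an interval of zero conditional mass) located exactly at cumulative level $q$.

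First I would make this characterization precise: define, for rationals $y_1 < y_2$ in $[0,1]$, the set $A_{y_1,y_2} := \{x \in \Ibb : K_C(x,(y_1,y_2)) = 0\}$ (measurable since $x \mapsto K_C(x,F)$ is measurable). If $x \in S_q$, then by the above there is a genuine gap, hence a rational subinterval $(y_1,y_2) \subset (0,1)$ with $x \in A_{y_1,y_2}$ and moreover $K_C(x,[0,y_1]) = q$ (the left endpoint of the gap sits at height $q$). The key quantitative input: for a \emph{fixed} pair $(y_1,y_2)$, consider the function $x \mapsto K_C(x,[0,y_1])$ restricted to $A_{y_1,y_2}$; its distribution (push-forward of $\mu_A$ restricted to $A_{y_1,y_2}$) is a finite measure on $[0,1]$, so it has at most countably many atoms. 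The value $q$ being attained by $K_C(x,[0,y_1])$ on a $\mu_A$-positive subset of $A_{y_1,y_2}$ forces $q$ to be an atom of this measure. Hence, for each fixed rational pair $(y_1,y_2)$, only countably many $q$ arise this way. Taking the union over the countably many rational pairs $(y_1,y_2)$ yields that $D$ is countable.

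The main obstacle, I expect, is pinning down the correct measurable description of "$q$ is a discontinuity point of $\tau \mapsto Q_C^\tau(x)$" in terms of $K_C(x,\cdot)$ and verifying it is stable enough to reduce to rational endpoints — in particular handling the fact that the gap $(y_1,y_2)$ depends on $x$, so one must cover $S_q$ by countably many sets on which the gap contains a common rational subinterval, and on each such piece argue that $K_C(x,[0,y_1])$ need not equal $q$ exactly unless $y_1$ is the true left endpoint. A clean way around this: instead of $K_C(x,[0,y_1])$ use the quantity $\sup\{y : K_C(x,[0,y]) \le q\}$ versus $\inf\{y: K_C(x,[0,y]) \ge q\}$, whose strict inequality is exactly the gap condition; but ultimately one can simply observe that if $(y_1,y_2)$ is a rational interval strictly inside the $x$-gap, then $K_C(x,[0,y_1]) = K_C(x,[0,y_2])$ and this common value equals $q$ only when the gap straddles $q$ — which one ensures by also fixing rational bounds $q_1 < q < q_2$ with $q_1 < K_C(x,[0,y_1]) \le K_C(x,[0,y_2]) < q_2$ impossible; cleaner still, note $q = K_C(x,[0,y_1])$ holds for \emph{some} choice, and for that choice $q$ is an atom. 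Once the reduction to countably many $(y_1,y_2)$ pairs with the atom argument is in place, the conclusion is immediate, and I would keep the measurability remarks brief since they are routine given the measurability of $x \mapsto K_C(x,F)$ already recorded in Section \ref{SecNotPre}.
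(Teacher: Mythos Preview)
Your approach is correct but takes a different route from the paper's. You cover $S_q$ by the countable family $\{x \in A_{y_1,y_2}: K_C(x,[0,y_1]) = q\}$ indexed by rational pairs $y_1<y_2$, and then for each fixed pair use that the pushforward of $\lambda\vert_{A_{y_1,y_2}}$ under $x \mapsto K_C(x,[0,y_1])$ is a finite measure on $\Ibb$ and hence has only countably many atoms; a countable union of countable sets finishes. The paper instead argues globally: setting $\Psi_C(x,y):=K_C(x,[0,y])$, it observes that $\lambda(S_q)>0$ forces $\lambda_2(\{\Psi_C=q\})>0$, so $q$ is a discontinuity point of the non-increasing function $\ell(z):=\lambda_2(\{\Psi_C\ge z\})$, and a monotone function has at most countably many discontinuities. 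The paper's argument is shorter and sidesteps the rational-interval cover entirely; yours is equally valid and stays within one-dimensional measure theory. One clarification: your worry that ``$K_C(x,[0,y_1])$ need not equal $q$ exactly unless $y_1$ is the true left endpoint'' is unfounded---any rational $y_1$ strictly inside the flat stretch satisfies $F_C^x(y_1)=q$ by definition of ``flat at level $q$'', so the reduction to rational subintervals goes through without the additional maneuvering in your final paragraph. Also note that in this bivariate section the integrating measure is $\lambda$, not $\mu_A$.
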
 
\begin{proof}
Suppose that $q \in (0,1)$ is a discontinuity point of $\tau \mapsto Q_C^\tau(x)$. Then, left-continuity implies that $Q_C^q(x) < Q_C^{q+}(x)$, where $Q_C^{q+}(x)$ denotes the 
right-hand limit of $\tau \mapsto Q_C^{\tau}(x)$ at $q$. Hence, by definition of the quantile function,  
there exists some $\Delta>0$, such that $y \mapsto K_C(x, [0, y])$ is constant on the interval 
$[Q_C^q(x),Q_C^q(x) + \Delta]$. In case $S_q$ fulfills $\lambda(S_q)>0$, the previous observation directly implies that
$$
\lambda_2 \left(\{(x,y) \in \Ibb^2: \, K_C(x,[0,y]) = q \} \right) >0.
$$
Setting $\Psi_C(x,y):=K_C(x,[0,y])$ and letting $[\Psi_C]_z$ denote the upper 
$z$-level of $\Psi_C$, we obviously have that the function $\ell: \Ibb \rightarrow \Ibb$, 
defined by
$$
\ell(z)= \lambda_2([\Psi_C]_z),
$$
is non-increasing on $\Ibb$. Every $q$ fulfilling $\lambda(S_q)>0$ obviously is a 
discontinuity point of $\ell$. As non-increasing function, $\ell$
can have at most countably many discontinuity points, and the proof is complete.
\end{proof}

As an immediate consequence of Lemma \ref{Lem20250923}, the following statements hold.

\begin{thm} \label{Theo20250919}
Suppose that $C \in \kc$, that $(U_1,V_1)\ldots,(U_n,V_n)$ is a sample from 
$(U,V)\sim C$, and that $E_n$ is the empirical copula. Furthermore set $N(n):=\lfloor n^s\rfloor$,
for some fixed $s \in (0,\frac{1}{2})$.
Then, for all but at most countably many
$\tau \in (0,1)$ and $\lambda$-almost every $x$, we have 
$$
\lim_{n \rightarrow \infty} Q_{\mathfrak{Cb}_{N(n)}(E_n)}^\tau(x) = Q_C^\tau(x).
$$
In particular, for all but at most countably many $\tau \in (0,1)$ and every $p \in [1,\infty)$, 
it holds that
$$
\lim_{n \rightarrow \infty} \Vert Q_{\mathfrak{Cb}_{N(n)}(E_n)}^\tau- Q_C^\tau \Vert_p=0.
$$
\end{thm}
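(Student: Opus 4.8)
The plan is to combine the three ingredients already assembled in this section: (a) the weak conditional convergence of $(\mathfrak{Cb}_{N(n)}(E_n))_{n\in\N}$ to $C$ with probability $1$, as recalled from \cite{ALFT}; (b) the elementary fact that weak convergence of distribution functions is equivalent to pointwise convergence of their quasi-inverses at continuity points of the limiting quasi-inverse (see \cite{VDW}); and (c) Lemma \ref{Lem20250923}, which limits the set of "bad" quantile levels. First I would fix the exceptional set $\Lambda \in \mathcal{B}(\Ibb)$ with $\lambda(\Lambda)=1$ coming from (a) and (b): for every $x \in \Lambda$, the conditional distribution functions $y \mapsto K_{\mathfrak{Cb}_{N(n)}(E_n)}(x,[0,y])$ converge weakly to $y \mapsto K_C(x,[0,y])$, hence $Q^\tau_{\mathfrak{Cb}_{N(n)}(E_n)}(x) \to Q^\tau_C(x)$ for every continuity point $\tau$ of $\tau \mapsto Q^\tau_C(x)$.

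Next I would use Lemma \ref{Lem20250923} to dispose of the dependence of the bad set of $\tau$'s on $x$. Let $D \subseteq (0,1)$ be the at most countable set of $q$ with $\lambda(S_q)>0$, where $S_q$ is defined in \eqref{eq:plateu}. For any fixed $\tau \in (0,1) \setminus D$ we have $\lambda(S_\tau)=0$, so the set $\Lambda_\tau := \Lambda \setminus S_\tau$ still satisfies $\lambda(\Lambda_\tau)=1$; and for every $x \in \Lambda_\tau$ the level $\tau$ is, by definition of $S_\tau$, a continuity point of $\tau \mapsto Q^\tau_C(x)$, so the pointwise convergence $Q^\tau_{\mathfrak{Cb}_{N(n)}(E_n)}(x) \to Q^\tau_C(x)$ holds. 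This gives the first displayed assertion for every $\tau \in (0,1)\setminus D$ and $\lambda$-a.e. $x$.

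For the $L^p$-statement, I would invoke dominated convergence on $(\Ibb,\mathcal{B}(\Ibb),\lambda)$: for the fixed $\tau \notin D$ we have $\vert Q^\tau_{\mathfrak{Cb}_{N(n)}(E_n)}(x) - Q^\tau_C(x)\vert^p \to 0$ for $\lambda$-a.e. $x$, and the integrand is bounded by $1$ since all quantile functions take values in $\Ibb$; hence $\Vert Q^\tau_{\mathfrak{Cb}_{N(n)}(E_n)} - Q^\tau_C\Vert_p^p \to 0$. Measurability of $x \mapsto Q^\tau_{\mathfrak{Cb}_{N(n)}(E_n)}(x)$ (for the integral to make sense) was already noted in Section \ref{SecNotPre}.

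The main obstacle — and the only genuinely non-routine point — is the interchange of the two "exceptional" directions: the weak-conditional-convergence result gives, for $\lambda$-a.e.\ $x$, convergence of quasi-inverses only at the ($x$-dependent) continuity points of $\tau \mapsto Q^\tau_C(x)$, so a priori the null set of bad $\tau$'s could vary with $x$ and one cannot immediately fix $\tau$ first. Lemma \ref{Lem20250923} is precisely what breaks this circularity, by showing that outside a countable set of $\tau$'s the set of $x$'s for which $\tau$ is a discontinuity point is itself $\lambda$-null; a Fubini-type argument then lets us swap the order of the quantifiers. I would make sure to state this swap explicitly rather than leave it implicit.
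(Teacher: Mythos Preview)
Your proposal is correct and follows exactly the approach the paper has in mind: the paper itself presents Theorem \ref{Theo20250919} as ``an immediate consequence of Lemma \ref{Lem20250923}'' after spelling out the ingredients (a) and (b) in the paragraph preceding the lemma, and your argument fills in precisely those details, including the dominated convergence step for the $L^p$-assertion. The explicit handling of the quantifier swap via $\Lambda_\tau := \Lambda \setminus S_\tau$ is a welcome clarification of what the paper leaves implicit.
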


We conclude our discussion with two concrete examples - a Marshall-Olkin copula as well as
a Clayton copula - and a small simulation study illustrating the speed of convergence.   

\begin{ex}[Marshall-Olkin] \label{ExMO}
It is well-known (see \cite[$\S$6.4]{DS2016} that the Marshall-Olkin copula $M_{\alpha,\beta} \in \kc^2$, for $\alpha, \beta > 0$ and $a(x) := x^\frac{\alpha}{\beta}$, is given by
\begin{align*}
    M_{\alpha, \beta}(x,y) := x^{1-\alpha} y \ONE_{ [0, a(x)] }(y) + x y^{1-\beta} \ONE_{ (a(x), 1] }(y) \hspace{1cm} ((x, y) \in \Ibb^2).
\end{align*}
As shown in \cite{Trutschnig2017}, the associated Markov kernel for $x \in (0,1)$ 
is given by
\begin{align*}
    K_{ M_{\alpha, \beta} }(x, [0, y]) = (1-\alpha) x^{-\alpha} y \ONE_{ [0, a(x)) }(y) + y^{1-\beta} \ONE_{ (a(x), 1 ] }(y),
\end{align*}
implying that $K_{M_{\alpha,\beta}}(x,\cdot)$ has a point mass at $y = a(x)$. 
In fact, we have $y^- < y^+$ with
\begin{align*}
    y^-&:=K_{ M_{\alpha, \beta} }(x, [0, a(x)-]) = (1-\alpha) x^{\alpha(\frac{1}{\beta}-1)} \\
    y^+&=K_{ M_{\alpha, \beta} }(x, [0, a(x)])=x^{\alpha(\frac{1}{\beta}-1)}
\end{align*}
As a direct consequence the quantile function is of the form
\begin{align*}
    Q_{ M_{\alpha, \beta} }^\tau(x) = \tfrac{ \tau x^\alpha }{1- \alpha} \ONE_{[0, y^-)}(\tau) + a(x) \ONE_{[y^-, y^+]}(\tau) + \tau^\frac{1}{1-\beta} \ONE_{(y^+, 1] }(\tau). 
\end{align*}
Moreover, using eq. (\ref{2025051303}) it is striaghtforard to verify that the 
regression function is given by
\begin{align*}
    r_{ M_{\alpha, \beta} }(x) = 1 - \tfrac{1-\alpha}{2} x^{\alpha(\frac{2}{\beta}-1)} - 
    \tfrac{1 - x^{\alpha(\frac{2}{\beta}-1)}}{2-\beta} \hspace{1cm} (x \in \Ibb).
\end{align*}
Notice that for $M_{\alpha,\beta}$ there is no $q \in (0,1)$ fulfilling $\lambda(S_q)>0$,
with $S_q$ according to eq. (\ref{eq:plateu}).
.
\end{ex}

As second example we consider a member of the Archimedean family. 
In this case the regression function then does not admit an elementary analytic form. 

\begin{ex}[Clayton] \label{ExC}
The Clayton copula (see \cite[Example 2.1.5]{Hoferetal2018}), with $\theta > 0$, is defined by
\begin{align*}
    C_\theta(x,y) = (x^{-\theta} + y^{-\theta} - 1)^{-\frac{1}{\theta}} \hspace{1cm} ((x,y) \in \Ibb^2).
\end{align*}
By direct computation, one thus verifies that
\begin{align*}
    K_{C_\theta}(x, [0, y]) = x^{-\theta-1} (x^{-\theta}+y^{-\theta}-1)^{-\frac{1}{\theta}-1} \hspace{1cm} ((x, y) \in \Ibb^2).
\end{align*}
Obviously, the Markov kernel is a strictly increasing function of $y$, we have $K_{C_\theta}(x, [0, Q_{C_\theta}^\tau(x) ] ) = \tau$ as well as 
\begin{align*}
    Q_{C_\theta}^\tau(x) = \rb{1+x^{-\theta}(\tau^{-\frac{\theta}{\theta+1}}-1)}^{-\frac{1}{\theta}}.
\end{align*}
With this, using change of coordinates we easily obtain $r_{C_\theta}(x) = \int_0^1 Q_{C_\theta}^y(x) dy$ or
\begin{align*}
    r_{C_\theta}(x) = \intl_0^1 \rb{1+x^{-\theta}(y^{-\frac{\theta}{\theta+1}}-1)}^{-\frac{1}{\theta}} d\lambda(y).
\end{align*}
As in the previous example there is no $q \in (0,1)$ fulfilling $\lambda(S_q)>0$, 
so the empirical checkerboard estimator is strongly consistent for every 
quantile $\tau \in (0,1]$.
\end{ex}

\begin{figure}[ht!]
\centering
\resizebox*{0.7\textwidth}{!}{\includegraphics{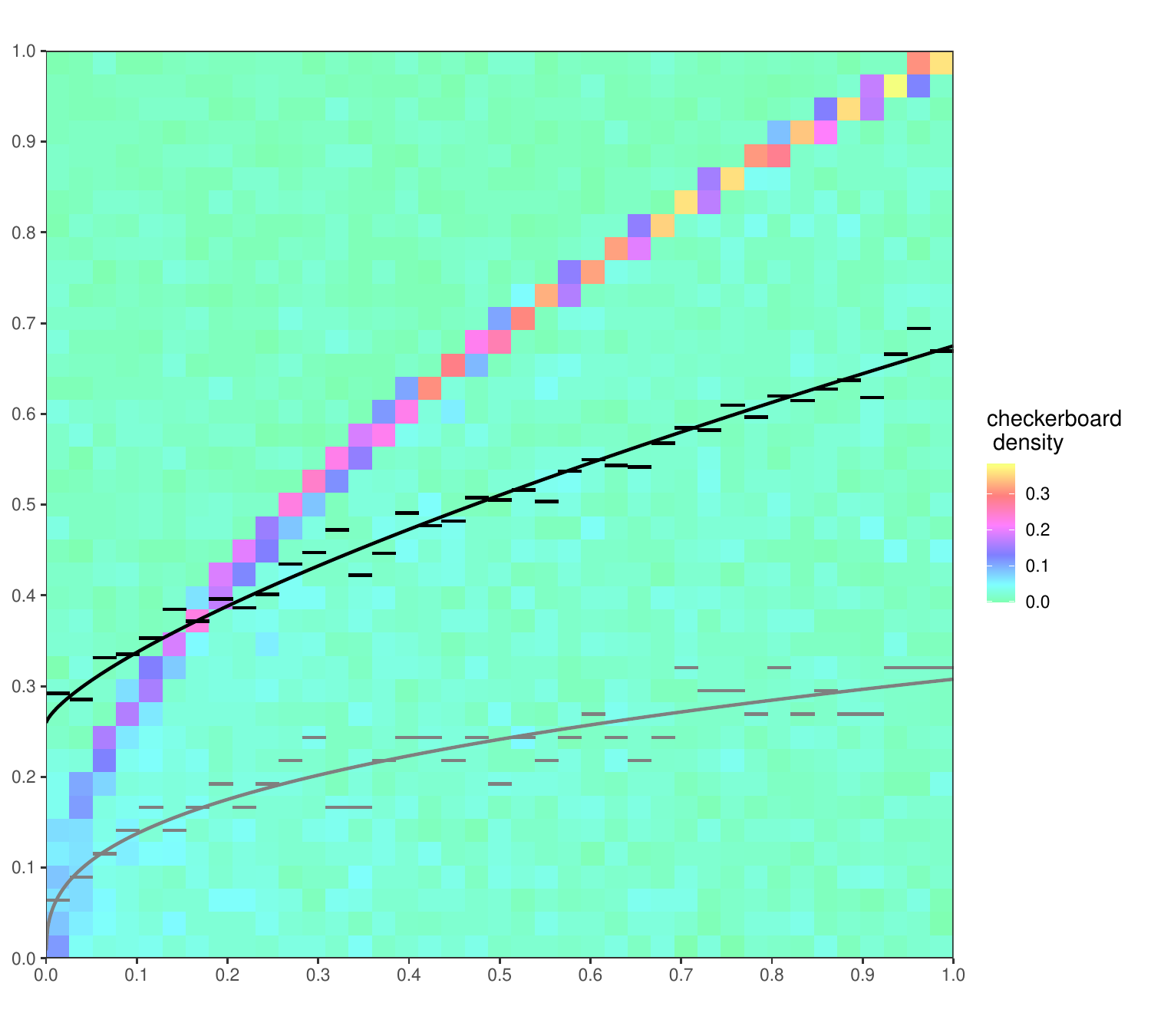}}
\caption{Empirical $N=63$ checkerboard density for a sample of size $n = 10.000$ from the
Marshall Olkin copula with parameters $(\alpha, \beta) = (0.35, 0.65)$, true mean and 
median regression functions (solid black and gray line, respectively), and
corresponding estimators $r_{\mathfrak{Cb}_N(E_n)}$ and $Q_{\mathfrak{Cb}_N(E_n)}^{0.2}$ 
(black and gray step functions).}
\label{MO_emp_cb_plot}
\end{figure}

We close this section with a small simulation study illustrating the estimation procedure 
and the speed of convergence of the involved, checkerboard-based estimators and consider the 
Marshall Olkin copula with parameters $(\alpha, \beta) = (0.35, 0.65)$ and a Clayton 
copula with $\theta = 2$. 
Figures \ref{MO_emp_cb_plot} and \ref{C_emp_cb_plot} depict the density of the 
empirical $N$-checkerboards for a sample of size $n = 10.000$ and resolution 
$N = \lfloor n^{0.4} \rfloor$. The black and the gray solid lines correspond 
to the true mean and quantile regression function, respectively, whose explicit 
expressions were derived in Examples \ref{ExMO} and \ref{ExC}. The black and gray 
step functions correspond to $r_{\mathfrak{Cb}_N(E_n)}$ and $Q_{\mathfrak{Cb}_N(E_n)}^{0.2}$, 
respectively.

In addition, Figure \ref{MO_C_boxplots} illustrates the speed of convergence of $r_{\mathfrak{Cb}_N(E_n)}$ and $Q_{\mathfrak{Cb}_N(E_n)}^{0.2}$. 
For each of the samples sizes $n$ mentioned on the $x$-axis 
we drew a sample of $n$ (from the considered copula), numerically calculated 
$$
\Vert r_{\mathfrak{Cb}_{N(n)}(E_n)}-r_C \Vert_1,\quad
 \Vert Q_{\mathfrak{Cb}_{N(n)}(E_n)}^\tau- Q_C^\tau \Vert_1,
$$
repeated the procedure $R=500$ times and summarized the obtained results as boxplots. 
All computations were performed in \texttt{R} using the packages \texttt{copula} and \texttt{qad}.

\vspace{1cm}
\noindent\emph{Acknowledgements} \\[3mm]
Both authors gratefully acknowledge the support of the WISS 2025 project `IDA-lab Salzburg' 
(20204-WISS/225/197-2019 and 20102-F1901166-KZP).


\begin{figure}
\centering
\resizebox*{0.7\textwidth}{!}{\includegraphics{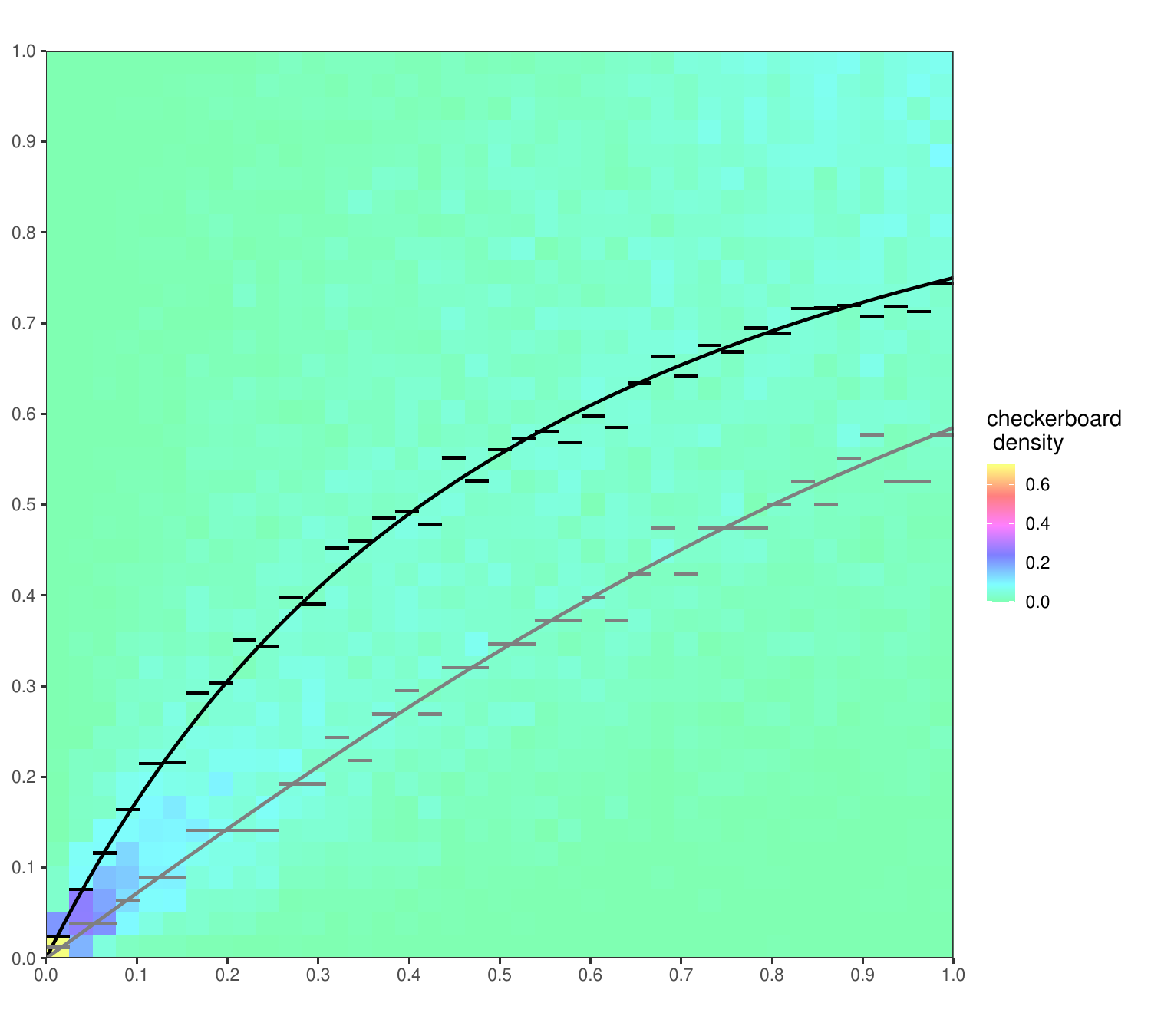}}
\caption{Empirical $N=63$ checkerboard density for a sample of size $n = 10.000$ from the
Clayton copula with parameter $\theta=2$, true mean and 
median regression functions (solid black and gray line, respectively), and the
corresponding estimators $r_{\mathfrak{Cb}_N(E_n)}$ and $Q_{\mathfrak{Cb}_N(E_n)}^{0.2}$ 
(black and gray step functions).}
\label{C_emp_cb_plot}
\end{figure}

\begin{figure}[ht!]
\centering
\resizebox*{0.8\textwidth}{!}{\includegraphics{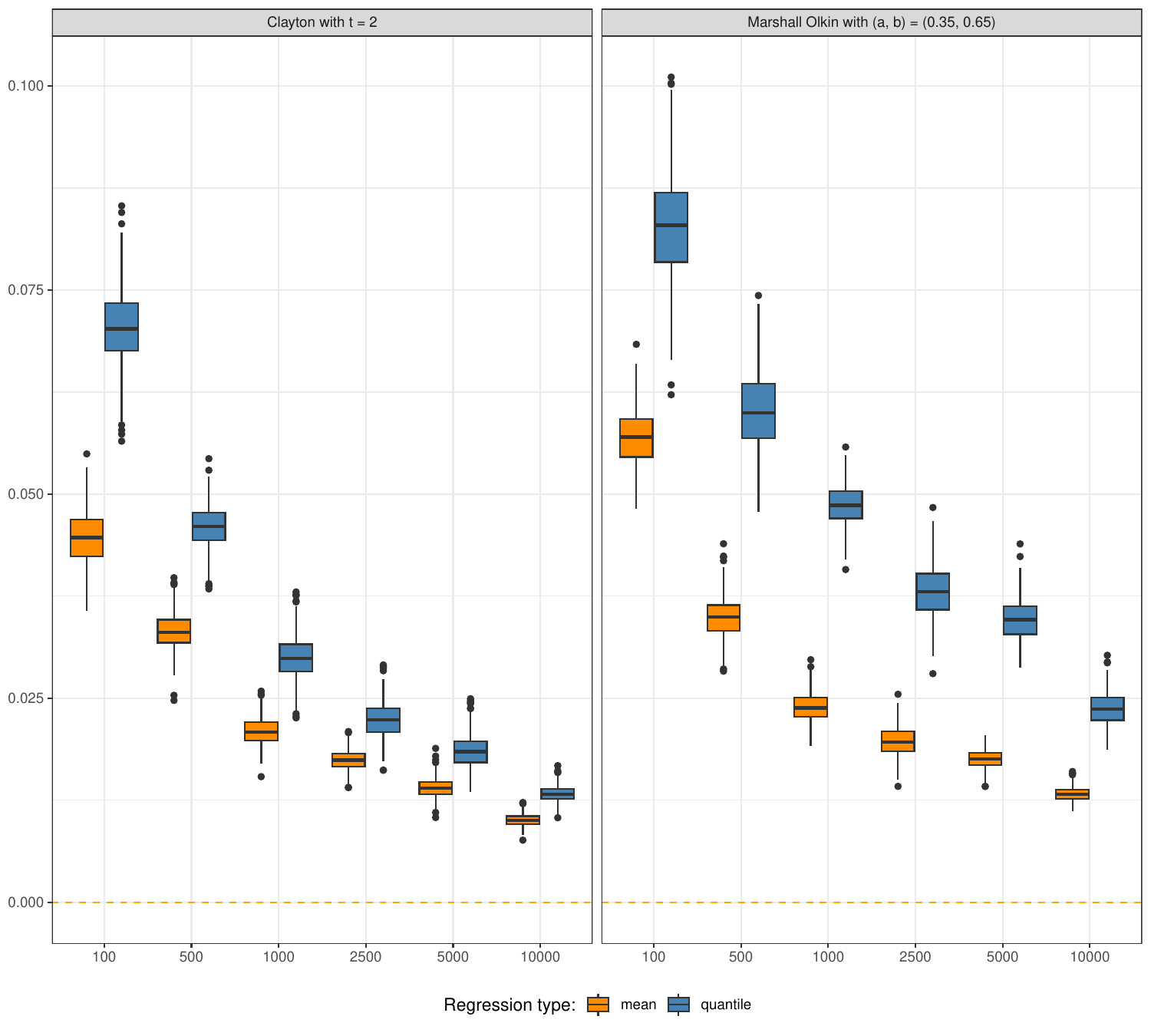}}
\caption{Boxplot summarizing the $L^1$-distances between the estimated and the true mean regression as well 
as the estimated and the true median regression function, respectively. 
For each sample size $n$ (on the $x$-axis) a total of $R=500$ runs were performed. }
\label{MO_C_boxplots}
\end{figure}

\appendix

\section{Proofs to Section \ref{SecNotPre}} \label{AppProNotPrel}

\begin{proof}[Proof of Lemma \ref{Lem2025062701}]
For brevity, we write
\begin{align*}
Q_d(\xvec_{1:d-2}, B) := \intl_{ \Ibb } K_C( \xvec_{1:d-1}, B ) K_{C_{1:(d-1)}}(\xvec_{1:d-2}, \de x_{d-1}) .
\end{align*}
Clearly, for fixed $\xvec_{1:d-2} \in \Ibb^{d-2}$, the assignment $B \mapsto Q_d(\xvec_{1:d-2}, B)$ fulfills 
the properties of a probability measure. 
Furthermore, by \cite[Lemma 14.23]{Klenke2020}, for fixed $B \in \mathcal{B}(\Ibb)$ the mapping
$\xvec_{1:d-2} \mapsto Q_d(\xvec_{1:d-2}, B)$ is Borel measurable. In other words: 
$Q_d: \Ibb^{d-2} \times \mathcal{B}(\Ibb) \rightarrow \Ibb$ is a Markov kernel and it remains to show that 
it is a Markov kernel of $C_{1:(d-2),d}$. For $E_1,E_2,\ldots,E_{d-2},E_d \in \mathcal{B}(\Ibb)$, 
setting $\mathbf{E}=E_1\times E_2 \times \ldots \times E_{d-2} \times \Ibb \times E_d$ and 
using disintegration twice (first for $\mu_C$, then for $\mu_{C_{1:(d-1)}}$) yields
\begin{align*}
    \mu_C(\mathbf{E}) &= \int_{E_1\times E_2 \times \ldots \times E_{d-2} \times \Ibb} 
    K_C(\mathbf{x}_{1:d-1},E_d)\, \de \mu_{C_{1:d-1}}(\mathbf{x}_{1:d-1}) \\
    &=  \int_{E_1\times E_2 \times \ldots \times E_{d-2}} \int_\Ibb  
    K_C(\mathbf{x}_{1:d-1},E_d)\, K_{C_{1:d-1}}(\mathbf{x}_{1:d-2},\de x_{d-1})\, \de \mu_{C_{1:d-2}}(\mathbf{x}_{1:d-2}) \\
    &= \int_{E_1\times E_2 \times \ldots \times E_{d-2}}  Q_d(\xvec_{1:d-2}, E_d) \de \mu_{C_{1:d-2}}(\mathbf{x}_{1:d-2}) 
\end{align*}
Considering $\mu_C(\mathbf{E})= \mu_{C_{1:(d-2),d}}(E_1 \times \ldots \times E_{d-2} \times E_d)$
and using the fact that the family of all rectangles of the form $E_1 \times \ldots \times E_{d-2} \times E_d$
constitute a semiring generating $\mathcal{B}(\Ibb^{d-1})$ this completes the proof.
\end{proof}

\begin{proof}[Proof of Lemma \ref{Lem20250922}]
For every $v \in \Ibb$ and $n \in \N$ we obviously have
\begin{align*}
\{\xvec \in \Ibb^{d-1}: f(\xvec)\geq v + \tfrac{1}{n}\} &\subseteq \{\xvec \in \Ibb^{d-1}: f(\xvec)> v \} 
  \subseteq \{\xvec \in \Ibb^{d-1}: f(\xvec)\geq  v \} \\
  &\subseteq \{\xvec \in \Ibb^{d-1}: f(\xvec)\geq  v -\tfrac{1}{n} \},
\end{align*}
which directly yields 
$$
\overline{m}_{f,C}(v+\tfrac{1}{n}) \leq m_{f,C}(v) \leq \overline{m}_{f,C}(v) \leq \overline{m}_{f,C}(v-\tfrac{1}{n}).
$$
Considering $n \rightarrow \infty$ and using monotonicity of all involved functions we get
$$
\overline{m}_{f,C}(v+) \leq m_{f,C}(v) \leq \overline{m}_{f,C}(v) \leq \overline{m}_{f,C}(v-).
$$
As a direct consequence, we can only have $m_{f,C}(v) \neq \overline{m}_{f,C}(v)$ if $v$ is a discontinuity point 
of $\overline{m}_{f,C}$. By monotonicity of $\overline{m}_{f_C}$, however, the set of discontinuity points 
of $\overline{m}_{f_C}$ is at most countably infinite, i.e., 
$$
m_{f,C}(v) =  \overline{m}_{f,C}(v)
$$ 
holds outside an at most countably infinite set. In particular, it follows that 
$V := \curbr{ v \in \Ibb : m_{f,C}(v) = \overline m_{f,C}(v) }$ is dense in $\Ibb$. Having 
this, it is straightforward to show that $f_{C, \downarrow}$ and $\overline f_{C, \downarrow}$ coincide on $\Ibb$. 
\end{proof}

\clearpage

\bibliographystyle{apalike}

\end{document}